\renewcommand{\eprint}[1]{\href{https://arxiv.org/abs/#1}{arXiv:#1}}
\newcommand{\pageafter}[1]{#1~pp.}
\newtheorem{theorem}{Theorem}
\newtheorem{corollary}[theorem]{Corollary}
\newtheorem{lemma}[theorem]{Lemma}
\newtheorem{proposition}[theorem]{Proposition}
\theoremstyle{definition}
\newtheorem{question}[theorem]{Question}
\newtheorem{definition}[theorem]{Definition}
\newtheorem{algorithm}[theorem]{Algorithm}
\newtheorem{remark}[theorem]{Remark}
\newtheorem{claim}[theorem]{Claim}
\newcommand{\bbH}{\mathbb{H}}
\newcommand{\bbP}{\mathbb{P}}
\newcommand{\bbR}{\mathbb{R}}
\newcommand{\bbT}{\mathbb{T}}
\newcommand{\bbZ}{\mathbb{Z}}
\newcommand{\cC}{\mathcal{C}}
\newcommand{\cD}{\mathcal{D}}
\newcommand{\cE}{\mathcal{E}}
\newcommand{\cF}{\mathcal{F}}
\newcommand{\cK}{\mathcal{K}}
\newcommand{\cL}{\mathcal{L}}
\newcommand{\cQ}{\mathcal{Q}}
\newcommand{\cS}{\mathcal{S}}
\newdimen\LineSpace
\tikzset{
    line space/.code={\LineSpace=#1},
    line space=10pt
}
\newcommand{\Hb}{\overline\bbH}
\newcommand{\<}{\langle}
\renewcommand{\>}{\rangle}
\newcommand{\diam}{\mathrm{diam}}
\newcommand{\eps}{\varepsilon}
\title{\scshape Explosive appearance of cores and bootstrap percolation on lattices}
\author[1]{Ivailo Hartarsky}
\author[2]{Lyuben Lichev}
    \affil[1]{Universite Claude Bernard Lyon 1, CNRS, Centrale Lyon, INSA Lyon, Université Jean Monnet, ICJ UMR5208, 69622 Villeurbanne, France, \texttt{hartarsky@math.univ-lyon1.fr}}
\affil[2]{TU Wien, Faculty of Mathematics and Geoinformation, Institute of Statistics and Mathematical Methods in Economics, Research Unit of Mathematical Stochastics, Wiedner Hauptstra\ss e 8-10, A-1040 Vienna, Austria, \texttt{lyuben.lichev@tuwien.ac.at}}
\begin{document}

\maketitle

\begin{abstract}
Consider the process where the $n$ vertices of a square $2$-dimensional torus appear consecutively in a random order.
We show that typically the size of the $3$-core of the corresponding induced unit-distance graph transitions from $0$ to $n-o(n)$ within a single step. 
Equivalently, by infecting the vertices of the torus in a random order, under two-neighbour bootstrap percolation, the size of the infected set transitions instantaneously from $o(n)$ to $n$. This hitting time result answers a question of Benjamini. 

We also study the much more challenging and general setting of bootstrap percolation on two-dimensional lattices for a variety of finite-range infection rules.
In this case, powerful but fragile bootstrap percolation tools such as the rectangles process and the Aizenman--Lebowitz lemma become unavailable.
We develop a new method complementing and replacing these standard techniques, thus allowing us to prove the above hitting time result for a wide family of threshold bootstrap percolation rules on the $2$-dimensional square lattice, including neighbourhoods given by large $\ell^p$ balls for $p\in[1,\infty]$.
\end{abstract}

\noindent\textbf{MSC2020:} 05C69; 05C80; 60C05; 60K35
\\
\textbf{Keywords:} bootstrap percolation; percolation time; sharp transition; $k$-core

\section{Introduction}
\label{sec:intro}
\subsection{Background}
\label{subsec:background}

The $k$-core of a graph $G$ is the maximal subgraph of $G$ with respect to inclusion whose minimum degree is at least $k$. 
A standard way of finding the $k$-core of a graph $G$ is running a suitable bootstrap percolation dynamics on $G$. Namely, one iteratively deletes an arbitrarily chosen vertex of $G$ with degree less than $k$ together with its incident edges until no such vertices remain. 
If $G$ is realised as a subgraph of a $d$-regular host graph $\Gamma$, the above process may be reformulated as follows: starting with the vertex set $V = V(\Gamma\setminus G)$, iteratively and as long as possible, find a vertex in $G\setminus V$ with $r=d-k+1$ or more $\Gamma$-neighbours in $V$ and add it to $V$.

A classical question originating from statistical physics is to study when a non-empty $k$-core appears in an iteratively constructed subgraph $G$ of $\Gamma$ where vertices or edges arrive consecutively in a random order. For instance, when $\Gamma$ is a complete graph whose edges arrive one at a time, the above question asks about the time when the Erd\H{o}s--R\'enyi process creates a graph with a non-trivial $k$-core. 
Of course, this question has a natural analogue when $\Gamma$ is an infinite graph: in this case, one is interested in the critical probability $p_{\mathrm{c}}$ such that, almost surely, the random subgraph of $\Gamma$ obtained after $p$-percolation has a non-empty $k$-core when $p > p_{\mathrm{c}}$ and an empty $k$-core when $p < p_{\mathrm{c}}$.
When $\Gamma$ is a lattice (e.g.\ $\bbZ^2$ with the usual graph structure) or a finite approximation thereof (e.g.\ the torus), this question is often reformulated in terms of the density of initial infections (i.e.\ the vertices of $\Gamma\setminus G$) necessary for bootstrap percolation to infect the entire graph.
If $\Gamma$ is the infinite $d$-regular tree and $k=2$, the question asks for the critical probability for the appearance of a bi-infinite path, that is, percolation.
Each of these questions and many more have been thoroughly studied in the last half a century: see \cite{Kong19} for a recent review emphasising numerous applications, \cites{Luczak91,Pittel96,Ferber23} for classical works on the Erd\H os--R\'enyi model, \cites{Chalupa79,Schwarz06,Balogh06a} for regular trees, \cites{Kogut81,Aizenman88,Bollobas23,Hartarsky24locality,Balogh12} for lattices, among many others. 
In each of these settings, one of the most compelling features of the problem is that, for suitable values of $k$, it exhibits a \emph{discontinuous phase transition}.
More precisely, when viewed as a function of the density of the initially infected vertices or edges of $G$, the density of the core in the host graph $\Gamma$ suddenly jumps from 0 to a strictly positive value. Such discontinuous transitions with diverging length scale (known as hybrid or mixed transitions) are particularly relevant from the viewpoint of the physics of jamming and glassy phenomena.

Besides determining the critical probability/density for the appearance of a non-empty $k$-core, other `critical' properties are also commonly of interest in this setting. 
For instance, one wants to know the size of the critical window in which the transition takes place (see \cites{Balogh03,Gravner08,Pittel96}, as well as~\cites{Dembo08,Gao18,Gao18a} for generalisations to random hypergraphs). 
One of the finest types of critical results concerns hitting times. Consider a large finite graph $\Gamma$ and its dynamic random subgraph $G$.
The \emph{hitting time} of a $k$-core in $G$ is the first time when $G$ contains a non-trivial $k$-core. 
What is the size of this core at the hitting time? In the Erd\H os--R\'enyi case, the answer to this question was provided by \L uczak~\cite{Luczak91} who proved that the core density is bounded away from 0 at the hitting time with high probability (as $|\Gamma|\to\infty$). 
Since then even more refined properties of the core at the hitting time are known, see e.g.\ \cites{Anastos21,Krivelevich14,Montgomery19}.

The goal of the present paper is to establish the even more brutal appearance of cores on lattices while proposing a more robust route to proving such critical results. 
Such phenomena on lattices are only beginning to be considered in physics \cite{Xue24}.

\subsection{Models}
\label{subsec:models}
In the remainder of this paper, we stick to bootstrap percolation terminology and leave the immediate translation to $k$-core language via the equivalence in Section~\ref{subsec:background} to the interested reader. 
We will only consider \emph{threshold} bootstrap percolation models since they are the ones in direct correspondence to $k$-core models; nevertheless, our methods also work in some other settings (see Section~\ref{subsec:future}). 
Threshold models are defined by an integer \emph{dimension} $d\ge 2$, a finite \emph{neighbourhood} $\cK\subset \bbZ^d$, an integer \emph{threshold} $r\ge 2$ (the case $r=1$ being trivial) and a set $X\subset\bbZ^d$ of \emph{initial infections}. The dynamics is given by the sets $X_0=X$ and, for every integer $\theta\ge 0$,
\begin{equation}
\label{eq:def:BP}
X_{\theta+1}=X_{\theta}\cup \left\{x\in\bbZ^d:|(x+\cK)\cap X_{\theta}|\ge r\right\}
\end{equation}
is the set of sites \emph{infected by time $\theta+1$}.
Here and below, we use the standard notation $x+Y=\{x+y:y\in Y\}$, $Y+Z=\{y+z:y\in Y,z\in Z\}$, $cY=\{cy:y\in Y\}$ for $c\in\bbR$, $x\in\bbR^d$ and $Y,Z\subset\bbR^d$. 
The \emph{closure} of $X\subset \bbZ^d$ is 
\[[X]=\bigcup_{\theta\ge 0} X_{\theta}.\]
As it will often be convenient to consider continuous domains, for $X\subset\bbR^d$, we set $[X]=[X\cap\bbZ^d]$.

We consider bootstrap percolation on the torus $\bbT=(\bbZ/n\bbZ)^d$ for large integer $n$. Note that this choice is somehow arbitrary and, in fact, any `reasonable' finite approximation of $\bbZ^d$ would do; see Section~\ref{subsec:future}. 
Rather than working with the usual initial condition $X_0$ sampled from a product Bernoulli measure, we dynamically add infections one at a time. More formally, we consider a uniformly random permutation $\sigma:\bbT\to\{1,\dots,n^d\}$ of the sites of $\bbT$ and denote \[A(t)=\{x\in\bbT:\sigma(x)\le t\}.\]
We define the (random) \emph{percolation time}
\begin{equation}
\label{eq:def:tau}
\tau=\min\left\{t\ge 0:[A(t)]=\bbT\right\}.
\end{equation}

It remains to specify the neighbourhoods and thresholds we focus on. 
In the sequel, we restrict our attention to planar lattices (where $d=2$) and only briefly discuss higher-dimensional settings in Section~\ref{subsec:future}.
We begin with two classical cases introduced already in the first work on bootstrap percolation on lattices \cite{Kogut81} with thresholds $r=2$ on the square lattice and $r=3$ on the triangular lattice defined by 
\begin{align*}
\cK_\square&{}=\{(0,0),(1,0),(0,1),(-1,0),(0,-1)\},&r_\square&{}=2,\\
\cK_\triangle&{}=\cK_\square\cup\{(1,-1),(-1,1)\},&r_\triangle&{}=3.
\end{align*}
We note that, for each of the neighbourhoods $\cK_\square$ and $\cK_\triangle$, the choice of threshold made above pinpoints the unique regime with genuinely interesting behaviour. 
Indeed, for smaller values of $r$, there are finite sets of infections which infect the entire plane (or torus) while, for larger values of $r$, there are finite sets of non-infected sites which remain such at all times.
We remark that, while we defined the model $\cK_\triangle$ on the square lattice for notational purposes, it is more naturally viewed on the triangular lattice where 3 infected neighbours (out of all 6) are needed to infect a given site.

As we shall see, the analysis of the preceding two models is considerably simplified by a special property absent in essentially all other models. 
We therefore also study certain `nice' larger neighbourhoods.
Let $\|\cdot\|_2$ and $\<\cdot,\cdot\>$ denote the Euclidean norm and scalar product on $\bbR^2$ and $B(x,\rho)$ be the corresponding ball with centre $x$ and radius $\rho$. 
We fix a closed convex set $K\subset\bbR^2$ invariant under rotation at angle $\pi/2$ (around the origin) such that $\max\{\|k\|_2:k\in K\}=1$. For instance, one could take $K$ to be the $\ell^p$-ball for $p\in[1,\infty]$ of appropriate radius. For $s\ge 1$, we set $\cK_s=(sK)\cap\bbZ^2$. Given $\cK_s$, the value of the threshold $r$ we will consider is the smallest one making the model non-trivial in the sense described above. Following~\cite{Gravner99}, given $K$ and $s$, we set 
\begin{equation}\label{eq:defrs}
r_s=1+\min_{u\in \bbR^2\setminus\{0\}}\left|\left\{x\in\cK_s:\<x,u\><0\right\}\right|.
\end{equation}
Models with threshold $r<r_s$ are called \emph{supercritical} and admit finite sets of infections infecting the entire plane, while ones with threshold $r>|\cK_s|/2$ are called \emph{subcritical} and admit finite non-infected sets that necessarily remain non-infected at all times (see \cite{Gravner99}*{Proposition~2.1}).
The remaining values being called \emph{critical}, $r_s$ becomes the first critical threshold. Notice that, for large $s$, there are only $\Theta(s)$ critical threshold values out of the $\Theta(s^2)$ possible ones.

\subsection{Results}
\label{subsec:results}

Our main result states that the percolation closure spreads extremely abruptly. Namely, only a single step before the percolation time, a negligible proportion of the torus is infected and, actually, essentially all infected sites are initial infections.

\begin{theorem}
\label{th:main}
Fix a convex $\pi/2$-rotation invariant set $K\subset\bbR^2$ defining the neighbourhoods $\cK_s$ for $s\ge 1$. Assume that $(\cK,r)\in\{(\cK_\square,r_\square),(\cK_\triangle,r_\triangle),(\cK_s,r_s)\}$. There is a constant $C>0$ such that, for every large enough $s$,
\begin{equation}
\label{eq:main}
\lim_{n\to \infty}\bbP\left(|[A(\tau-1)]|\ge \tau(1+C/\log n)\right)=0.
\end{equation}
\end{theorem}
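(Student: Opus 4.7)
The strategy I would pursue exploits the sharpness of the bootstrap percolation transition together with a subcritical closure estimate. Since $[A(\tau-1)]\neq\bbT$, the configuration at the penultimate step is effectively subcritical, and one expects its closure to be only slightly larger than the seed set $A(\tau-1)$ itself. To make this quantitative, I would combine known sharp threshold results---stating that $\tau$ concentrates on a window of width $o(\tau_{\mathrm c})$ around the critical time $\tau_{\mathrm c} = n^2p_{\mathrm c}(n)$ with $p_{\mathrm c}(n) = \Theta(1/\log n)$---with a bound on the expected size of the dynamically-inferred infections when the seed density is slightly below $p_{\mathrm c}$.

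Concretely, I would pick a reference time $t^\star = \tau_{\mathrm c}(1 - c_0/\log n)$, with $c_0$ a constant chosen large enough that $\bbP(\tau\geq t^\star)\to 1$ thanks to the sharp threshold. At this density the process is subcritical by a margin of $\Omega(1/\log n)$, which I expect to be large enough to control the expected extra closure
\[
\bbE\bigl[|[A(t^\star)]\setminus A(t^\star)|\bigr]=O(t^\star/\log n),
\]
by decomposing $[A(t^\star)]$ into internally spanned non-percolating components and summing over their possible shapes and sizes. Markov's inequality then yields $|[A(t^\star)]|\leq t^\star(1+C/\log n)$ with high probability. Finally, since $\tau-1-t^\star = O(\tau_{\mathrm c}/\log n)$, at most $O(\tau/\log n)$ additional seeds enter between times $t^\star$ and $\tau-1$; the bound extends to $\tau-1$ provided the cascades these seeds trigger stay bounded in total size, which should follow from the hypothesis $[A(\tau-1)]\neq\bbT$ and the geometry of non-percolating configurations.

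The chief obstacle is the subcritical closure estimate in the general $\cK_s$ case. For $\cK_\square$ and $\cK_\triangle$, the rectangles process together with the Aizenman--Lebowitz lemma allow one to decompose $[A(t^\star)]$ into rectangular droplets whose sizes are tightly controlled, and the expected contribution of each droplet class is summable in a classical way. As the introduction already stresses, these tools fail for $\cK_s$ with $s$ large, and the plan must instead be to develop a substitute---for instance via a direct analysis of the stable shapes available to $[A(t^\star)]$, or via coverings of the closure by approximately convex non-percolating sets---producing the same $O(t^\star/\log n)$ bound on the expected extra closure. Making such a substitute sharp enough to survive both the Markov step and the passage from $t^\star$ to $\tau-1$, uniformly over the whole family $\cK_s$, is the core technical difficulty of the theorem.
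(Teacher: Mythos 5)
Your plan has genuine gaps, and they sit exactly where the theorem's difficulty lies. First, the reduction to a fixed reference time $t^\star=\tau_{\mathrm c}(1-c_0/\log n)$ requires a critical-window estimate at relative scale $1/\log n$: you need both $\bbP(\tau\ge t^\star)\to1$ and, implicitly for your first-moment bound, $\bbP(\tau\le t^\star)=O(1/(\log n)^2)$. No such window bound is available for the models in the theorem (for the general $\cK_s$ family only law-of-large-numbers concentration of $\tau\log n/n^2$ is known, and even for the two-neighbour model the window at this scale is not established); the paper deliberately uses only the crude bounds $\varepsilon n^2/\log n<\tau<n^2/(\varepsilon\log n)$. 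Second, the proposed estimate $\bbE\bigl[|[A(t^\star)]\setminus A(t^\star)|\bigr]=O(t^\star/\log n)$ is false as stated: at density $(1-c_0/\log n)p_{\mathrm c}$ the probability that a given critical droplet is internally spanned is only a constant factor $\e^{-\Theta(c_0)}$ smaller than at $p_{\mathrm c}$ (the margin changes $1/p$ by $\Theta(c_0)$ in the exponential), and once spanned such a droplet typically grows macroscopically since every long line still contains seeds; hence the unconditional expectation is $\Omega\bigl(n^2\e^{-\Theta(c_0)}\bigr)$, not $O(n^2/(\log n)^2)$, and the Markov step collapses. Avoiding this forces you to condition on non-percolation, which is precisely the conditioning a hitting-time result must handle and which first-moment computations under the product/uniform measure do not survive.

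Third, the passage from $t^\star$ to $\tau-1$ (``the cascades these seeds trigger stay bounded, which should follow from $[A(\tau-1)]\neq\bbT$'') is not a routine extension: it is the entire content of the theorem. The paper's proof works directly at the random time $\tau-1$ and makes that implication deterministic on a typical environment: it shows that most sites are isolated (no $1/\varepsilon$-connected group of $14$ seeds nearby), that with high probability every segment of length $\varepsilon(\log n)^3$ perpendicular to a stable direction contains a seed (event $\cE_2$), so that any $\|\cK\|\times(\log n)^3/\varepsilon^2$ rectangle in the closure would infect all of $\bbT$ (Lemma~\ref{lem:long:rectangle:suffices}), contradicting $[A(\tau-1)]\neq\bbT$; and then Proposition~\ref{prop:long:rectangle} — the deterministic statement whose proof for large $\cK_s$ is the paper's replacement for the Aizenman--Lebowitz/rectangles machinery you invoke — shows that infecting an isolated site in a sparsely seeded region necessarily creates such a rectangle. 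Consequently only seeds and sites with two nearby seeds can lie in $[A(\tau-1)]$, which yields the $\tau(1+C/\log n)$ bound. Your outline acknowledges the need for a substitute for the rectangles process in the $\cK_s$ case, but without (i) a window estimate you do not have, (ii) a corrected subcritical moment bound, and (iii) the deterministic penetration-implies-long-rectangle statement, the argument does not close.
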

We further note that, for each $(\cK,r)$ as in the theorem, there exists $\lambda>0$ solving a certain variational problem such that $(\tau \log n)/n^2\to \lambda$ in probability as $n\to\infty$ \cite{Duminil-Copin24}. In particular, $|[A(\tau-1)]|=o(n^2)$ with high probability while $|[A(\tau)]|=|\bbT|=n^2$ by \eqref{eq:def:tau}. Moreover, using that typically $\tau = \Theta(n^2/\log n)$, it is not hard to check that, with high probability, the set $(A(\tau-1))_1$ of sites infected after one step of the bootstrap percolation dynamics at time $\tau-1$ has cardinality $\tau(1+\Theta(1)/(\log n)^{r-1})$, where $\Theta(\tau)/(\log n)^{r-1}$ essentially corresponds to the expected number of non-infected sites with $r$ infected sites in their neighbourhood. 
In particular, the $1/\log n$ error term in Theorem~\ref{th:main} is sharp for $(\cK,r)=(\cK_\square,r_\square)$.

While the $\square$ and $\triangle$ cases of Theorem~\ref{th:main} partially answer a question of Benjamini~\cite{Benjamini19} about the effect of geometry (planar, hyperbolic, etc.) on the transition of the $k$-core, the large neighbourhood case in our result is much more challenging and represents our main contribution. Indeed, a large number of results in bootstrap percolation are only available for the two-neighbour model (on $\bbZ^d$, boxes or tori) thanks to a classical approach pioneered by Aizenman and Lebowitz~\cite{Aizenman88}.  
While their proof technique is very efficient, we shall see that it is also very fragile and fails to apply in reasonable generality. 
Our approach to large neighbourhoods is different and has the potential to apply much more broadly. 
We direct the reader to Section~\ref{sec:outline} for an outline of the proof. 
In this respect, the large neighbourhood part of Theorem~\ref{th:main} is intended as a proof of concept, even though it already covers interesting models.

\subsection{Extensions}
\label{subsec:future}
Since our goal is to showcase a new method, we have opted for simplicity over generality of the setting. Yet, our proof and results can already be extended in various ways.

\paragraph{Boundary conditions.}
The choice of the torus as our domain is not fundamental. For example, the same method can be used to show that, for the same random process on a lattice approximation of a smooth finite domain in the plane with mesh tending to 0, the density of the closure jumps from close to 0 to close to 1 in one step. 
Yet, some corners of the boundary may be hard (or even impossible) to infect without waiting for an initial infection to be added there. For instance, one could consider the $\square$ model on a large square box or the $\triangle$ model on a large regular hexagon without encountering this issue, but not if one works on a triangular domain.

\paragraph{Variants of the $\square$ and $\triangle$ models.}
Following our arguments, analogues of Theorem~\ref{th:main} can be shown for several commonly considered variants.
Examples include the modified two-neighbour model on the square lattice, in which one requires two non-opposite neighbours; 
the Frob\"ose model, in which one infects the fourth vertex of a 4-cycle with three infections in the square lattice; 
the modified three-neighbour model on the triangular lattice, in which three non-opposite neighbours are required; 
the triangular Frob\"ose model, in which one infects the fourth vertex of a 4-cycle with three infections not forming a triangle in the triangular lattice. Other closely related variants are also possible.

\paragraph{Higher-dimensional $\square$ model.}
The conclusion of Theorem~\ref{th:main} can be extended to the two-neighbour model on the hypercubic lattice in arbitrary dimension $d\ge 2$. 
We do not know of other higher dimensional results beyond the modifications of this model along the lines of the previous paragraph (modified two-neighbour, Frob\"ose, etc.).

\paragraph{The $\boxtimes$ model.}
Consider the model given by $\cK_\boxtimes=\{-1,0,1\}^d$ and $r_\boxtimes=3^{d-1}+1$ for $d\ge 2$. 
If $d=2$, a bit more complicated version of our argument still implies the conclusion of Theorem~\ref{th:main} for this model. 
Namely, Lemma~\ref{lem:single:site} below is no longer true but the exceptions to it can be controlled by hand (also see \cite{Balogh03}*{Section~3.3} for a closely related claim).
As shown by the example in Figure~\ref{fig:2,3}, this is about as far as the Aizenman--Lebowitz approach (described in Section~\ref{sec:AL4}) can go. We mention in passing that the fact that $r_\boxtimes$ is the smallest critical threshold for this neighbourhood for any $d\ge 2$ answers \cite{Bresar24}*{Question~23}, correcting the guess for the answer. This fact is a direct application of the universality results of \cite{Balister25}.

\paragraph{Edge bootstrap percolation.} A commonly studied process in the random graphs literature consists in constructing the edges of a host graph consecutively in a random order, instead of its vertices.
This setting may be naturally combined with \emph{$H$-graph bootstrap percolation} where an edge becomes infected if it participates in a copy of a graph $H$ where all other edges are already infected.
In the case when $H$ is a 3-star and the host graph is the unit distance graph on $\mathbb Z^2$, our technique may be used to derive an analogue of Theorem~\ref{th:main}.

\paragraph{Other `critical' results.} 
To our knowledge, the only existing `critical' results (that is, results regarding the behaviour within the critical window of values of the time $t\in[0,n^d]$) for critical models (as defined at the end of Section~\ref{subsec:models}) are
\cite{Balogh03}, \cite{Gravner08}*{Theorem~2}, \cite{Hartarsky22phd}*{Proposition~1.4.3} and \cite{Bartha15}.  
They all apply to the (modified) $\square$ and $\boxtimes$ models and crucially rely on the Aizenman--Lebowitz approach. 
We believe that our proof allows the extension of these results to the models we cover.

\paragraph{General critical models.}
Along the lines of the conjecture after \cite{Balogh03}*{Theorem~2}, we ask the following.

\begin{question}\label{qn:BB}
For which critical boootstrap percolation models do we have that, for all $\varepsilon>0$,
\[\lim_{n\to\infty}\bbP\left(\frac{|[A(\tau-1)]|}{n^d}>\varepsilon\right)=0?\]
We conjecture that this is the case when $d=2$ and $(\cK_s,r_s)$ is defined by a $\pi/2$-rotation invariant convex $K$ and any $s\ge 1$ (as opposed to $s$ large enough).
\end{question}

We suspect that Question~\ref{qn:BB} has a positive answer for essentially all critical models. 
However, a trivial counterexample is given by the two-dimensional $\square$ model on a sublattice, that is,
\begin{align*}
\cK_\diamond&{}=\{(1,1),(-1,1),(-1,-1),(1,-1)\},&r_\diamond&{}=2.
\end{align*}
In this case, our methods readily yield that $|[A(\tau-1)]|/n^2$ converges to $1/2$ in probability as $n\to\infty$ for even $n$ and to $0$ for odd $n$.\footnote{Indeed, the unit distance graph on the underlying torus is bipartite if and only if $n$ is even; thus, roughly speaking, the process decomposes into two independent copies of $(\cK_{\square},r_{\square})$.} We believe such periodicity issues to be the only obstruction (see also \cite{Duminil-Copin24}*{Figure~6} for a less straightforward problematic example). 
Similarly to other critical results discussed above, the only difficulty is establishing a result like Proposition~\ref{prop:long:rectangle}, whose proof already contains several robust steps.
Contrary to other deterministic statements like Lemma~\ref{lem:single:site} and Corollary~\ref{cor:AL}, we expect this to be possible in significant generality, as shown by our results.

\section{Outline}
\label{sec:outline}
In this section, we outline the proof of Theorem~\ref{th:main}.
\subsection{The Aizenman--Lebowitz approach}
Let us first explain the proof of Theorem~\ref{th:main} for the $\square$ model, the $\triangle$ one being very similar. We use relatively standard ideas going back to \cite{Aizenman88}. 
The proof is divided into two parts -- a probabilistic and a combinatorial one, contained in Sections~\ref{sec:proba} and~\ref{sec:AL4}, respectively.

\subsubsection{Probabilistic argument}
\label{subsec:proba}
We start by recalling from \cite{Aizenman88} that the percolation time $\tau$ is of order $n^2/\log n$.
We consider the closure at time $\tau-1$. A site may be in the closure for one of the following reasons:
\begin{itemize}
    \item the site itself is initially infected;
    \item there are two initial infections very close to the site;
    \item there is a group of several initial infections close to each other and not very far from the site;
    \item none of the above.
\end{itemize}
The first three cases are easy to control: the first item contributes the first order term $\tau$ in \eqref{eq:main}; the second one contributes the correction $C\tau/\log n$, while the third one is a lower order correction. 
We therefore focus on the fourth case. In this case, infection needs to penetrate a large zone poor in infections around the target site in order to infect it.

At this point, we arrive at the deterministic combinatorial input discussed in the next section. 
Namely, if infection does penetrate such a zone, it necessarily creates a fully infected rectangle of size proportional to the size of the infection-deprived zone. 
However, at the typical values of $\tau-1$, such a large rectangle is likely to find initial infections at distance 1 from its longer sides. 
Notice that, in this configuration, a fully infected new row or column can be added to the rectangle (see Lemma~\ref{lem:single:site}). Therefore, the initial infections typically present allow the rectangle to grow one line at a time and infect the entire torus. 
This yields the contradiction $\tau-1\ge \tau$. Thus, the fourth scenario above can only occur if the environment of infections around the rectangle is atypical, which is unlikely.

\subsubsection{Combinatorial argument}
\label{subsec:combi}
The probabilistic part of the argument reduces Theorem~\ref{th:main} to proving that, if infection reaches the centre of a box with only very small and well-separated groups of initial infections and no initial infections close to its centre, then there has to be a large rectangle in the closure. To prove this, we use the so-called rectangles process (see Corollary~\ref{cor:AL}). It states that the closure of a set can be determined by merging axis-parallel rectangles, as soon as they are close, by replacing them by the smallest rectangle containing them. 

In order for infection to penetrate a region poor in initial infections, it needs to supply help from the boundary of that region. 
But the rectangles process then guarantees that an infected rectangle will be formed touching both the boundary of the region and the site we need to infect in its middle. This is exactly the desired deterministic statement concluding the proof.

Unfortunately, the rectangles process is only available for the $\square$ model. A hexagon process applies to the $\triangle$ model, but one cannot go much further than this. 
Indeed, in Section~\ref{sec:AL4}, we provide an example showing that, already for $\ell^1$ balls of radius 4, there is no hope for the above proof: indeed, in this case, infection is able to travel unbounded distances without creating any solid completely infected region. 
Additionally, solid infected regions can arise in a more complicated way than by merging comparably sized smaller regions. It is this issue that we seek to address.

\subsection{Large neighbourhoods}
The probabilistic part of the argument described in Section~\ref{subsec:proba} is very general and requires no significant change (up to using \cite{Bollobas23} instead of \cite{Aizenman88}). 
Thus, our goal is proving the same deterministic statement about penetrating sparsely infected regions as in Section~\ref{subsec:combi}. 
However, instead of looking for a simple characterisation of closed infected sets, we use the fact that infection is somehow propagating through a region with few initial infections. 
We mostly rely on two types of arguments: looking forward or backward in time. Looking forward in time refers to taking (part of) the closure of infections we have already produced. Looking backwards in time refers to asking how a given set of infections appeared, usually yielding further infections at its boundary.

On the high level, we will ensure the sparsity of $[A(\tau-1)]$ by studying the mechanism allowing the centre of a large region $R$ poor in infection to be infected.
By looking backwards in time, we show that the above event requires a large connected set $P\subset R$ to be included in $[A(\tau-1)]$, hopefully allowing $P$ to spread the infection further when we look forward in time.

Unfortunately, matters are much more complicated than before since it is hard to:
\begin{itemize}
    \item find a suitably large infected region $P$ to start this procedure with;
    \item keep its shape from becoming too degenerate or irregular, given that we have no control over the positions of the infections when looking backwards in time;
    \item deal with the fact that, while sparse, initial infections are not completely absent from $R$ and it may happen that precisely these rare initial infections serve as entry points in $P$.
\end{itemize} 
With these issues in mind, the proof of Section~\ref{sec:penetration} proceeds in several steps described below, each using somewhat different arguments. 
In the rest of this section, for concreteness, we focus on the neighbourhood $\cK_s$ given by a large Euclidean ball and fix a set $A\subset\bbZ^2$ of initial infections with $0\in[A]$, $\min_{x\in A} \|x\|_2$ large and no groups of more than a fixed number (as $s\to\infty$) of infections close to one another. 
Indeed, this can be obtained by slightly changing our target site to ensure that it is fairly far from $A$.

\subsubsection{Finding an initial seed}
The first step (Section~\ref{subsec:seed} and Lemma~\ref{lem:big square}) consists in finding a \emph{seed}: a fully infected square of side length $C_2s$ for some large $C_2$. This is based on a double-counting argument taking into account the following fact. Each site in $[A]\setminus A$ uses $r_s$ infections to become infected but can contribute to the infection of at most $|\cK_s|-r_s-1$ other sites. 
Since $r_s=|\cK_s|/2-\Theta(s)$ and $|\cK_s|=\Theta(s^2)$, if there are no initial infections close to the origin, 
on average, most sites in $[A]$ close to the origin should have all but $O(s)$ of the sites in their neighbourhood in $[A]$ (see Lemma~\ref{lem:good2}), 
which easily yields a completely infected ball of radius $s/50$ around them (see Lemma~\ref{lem:good1}). In particular, only a small fraction of $[A]$ is not surrounded by such an infected ball. 
Taking into account the effect of the boundary provides us with a square of side length $C_2s$ in which the density of such sites is so high that it actually forbids the presence of any non-infected site.
 
\subsubsection{Adding bumps to convex sets}
We have obtained a seed $P$ close to the origin and we want to make it grow, still within the zone completely free of initial infections around the origin. 
The seed is so small that its curvature is still visible on scale $s$, so making $P$ grow requires significant help from infections outside $P$. Let us imagine $P$ is a ball for the moment. 
Looking at the first site $z\in P$ to become infected, we necessarily find $\Theta(s^2)$ infections in $(z+\cK_s)\setminus P$. 
Looking forward in time now, we discover that a small ball of radius $\varepsilon s$ becomes infected at the point of the boundary of $P$ closest to $z$. 
We could view this `ball with a bump' as our new region, but iterating this approach quickly leads to problems as bumps on bumps lead to infected sets which are hard to control.
Therefore, we always maintain our infected region to be a convex set which does not become `too thin' (see Definition~\ref{def:polygon}). 

In order to preserve these features, we localise where the bump appears. Namely, we look for a bump near the endpoints of a vertical chord $xy$ 
such that, first, the slope of $\partial P$ around each endpoint is not too large and, second, this slope barely changes at distance $C_4s$ from $xy$ for some large constant $C_4$ (see Definition~\ref{def:admissible}).
Then, the part of $P$ at distance at most $3s$ from the line $xy$ 
is approximately a long thin trapezoid $S$.
We consider the first site $z\in S$ to become infected. It is not hard to show that, by our assumptions on $\cK_s$ (in particular, the $\pi/2$-rotation invariance), if $S$ were an infinite vertical strip, infection could not enter it. Therefore, $z$ is close to the top or bottom boundary of $S$. 
Combining this observation with the slope assumptions, we can show (see Lemma~\ref{lem:blob}) that a bump $B$ of size $\varepsilon s$ appears on the corresponding top or bottom boundary of $S$.

Simply adding this bump to $P$ would break convexity. Thus, we need to look how $P\cup B$ spreads forward in time and hope to extract a larger convex set from it. 
Unfortunately, if $P$ is rather flat but then has a corner (e.g.\ if $P$ is a large square), the effect of the bump $B$ may have trouble reaching that corner. 
For this reason, we may need to cut out parts of the convex set near such corners. Cutting out parts of $P$ has the undesirable effect that the leftmost and rightmost slices of the resulting convex set may become too thin, so additional local modifications around them are necessary. Moreover, these need to be done carefully so that the total area lost is smaller than the gain achieved by using the bump. This is the most technical part of the proof.

\subsubsection{Extending droplets once they are large}
So far, we managed to produce a fully infected rectangle $R$ of length much larger than $s^3$ and width $s$. 
We use a slicing argument similar to the previous section (see Lemma~\ref{lem:band}) in order to find infections next to the long sides of $R$ and away from the short ones. 
This allows $R$ to become large in both directions, eventually yielding an infected square of side length larger than $s^3$ (see Lemma~\ref{prop:very:big:square}). 
Out of this square, we extract a `droplet', that is, a convex set with $O(s^2)$ sides in well-chosen prescribed directions and length at least $Cs$ for a suitably large $C$ (in fact, we use different $C$ for different types of directions).
Such droplets can grow as follows. Each growth step, called \emph{extension}, consists in keeping all corners except two consecutive ones $v,w$ fixed, and adding a trapezoid with base $vw$ to the droplet while keeping the directions of all sides unchanged. 
If the normal $u$ to the side $vw$ satisfies $|\{x\in\cK_s:\<u,x\>< 0\}
|\ge r_s$, extension works automatically by taking the closure of the droplet. 
Such sides are called \emph{unstable}. When the length of all unstable sides come close to the threshold $10s$, we turn our attention to long stable sides. For these, we require an additional infection outside the droplet close to its corresponding side in order to perform the extension. 

We show that such additional infections are necessarily present, as long as initial infections are sparse, as assumed (see Lemma~\ref{lem:terminate}). 
To establish this, we argue by contradiction. Indeed, if all the unstable sides are short and none of the long stable sides has a helping infection, 
there is a frame around the droplet with no element of $[A]$ except around short sides and corners. 
But this small boundary condition together with the sparse initial infections inside the droplet cannot suffice to infect the entire droplet. 
As a result, the growth is uninterrupted until the droplet increases its size enough to exit the region guaranteed to be poor in initial infections. 
This is exactly the deterministic statement we set off to prove. 

\section{Proof of Theorem~\ref{th:main}}
\label{sec:proba}
In this section, we reduce Theorem~\ref{th:main} to a deterministic statement (Proposition~\ref{prop:long:rectangle}). 
\subsection{Preliminaries}
We define the \emph{radius} of the neighbourhoods by setting 
\begin{align}
\label{eq:def:norm:K}
\|\cK_s\|&{}=s,&\|\cK_\square\|&{}=1,&\|\cK_\triangle\|&{}=\sqrt 2.\end{align}
We also define the half-planes
\begin{align*}
\bbH_u(l)=\left\{x\in\bbR^2:\<x,u\>< l\right\}\qquad\qquad\text{and}\qquad\qquad\Hb_u(l)=\left\{x\in\bbR^2:\<x,u\>\le l\right\}
\end{align*}
for a \emph{direction} $u\in S^1=\{v\in\bbR^2:\|v\|_2=1\}$ in the \emph{unit circle} and $l\in\bbR$. If $l=0$, we omit it in the above notation. 
We say that a direction $u\in S^1$ is \emph{unstable} (for a threshold bootstrap percolation model $(\cK,r)$) if $|\bbH_u\cap\cK|\ge r$, and \emph{stable} otherwise. 
We denote by $\cS = \cS(\cK,r)\subset S^1$ the set of stable directions. Setting
\begin{align}
\nonumber\cS_\square&{}=\{(1,0),(0,1),(-1,0),(0,-1)\},\\
\label{eq:stable}\cS_\triangle&{}=\cS_\square\cup\left\{(1/\sqrt 2,1/\sqrt 2),(-1/\sqrt 2,-1/\sqrt 2)\right\},\\
\nonumber\cS_\boxtimes&{}=\cS_\triangle\cup\left\{(1/\sqrt 2,-1/\sqrt 2),(-1/\sqrt 2,1/\sqrt 2)\right\},
\end{align}
notice that, for the models we consider, it holds that $\cS=\cS_\square$ if $(\cK,r)=(\cK_\square,r_\square)$ and $\cS=\cS_\triangle$ if $(\cK,r)=(\cK_\triangle,r_\triangle)$. 
The next lemma loosely follows \cite{Gravner99}.
\begin{lemma}
\label{lem:Ksrs}
    Let $(\cK,r)=(\cK_s,r_s)$ for some $s\ge 4$ and $\pi/2$-rotation invariant $K\subset \bbR^2$. Then
    \begin{align*}
    r&{}=1+|\{(x_1,x_2)\in\cK:x_2<0\}|,&\cS&{}\in\{\cS_\square,\,\cS_\boxtimes\}.\end{align*}
\end{lemma}
\begin{proof}
    Recall that $\cK=(sK)\cap\bbZ^2$ with $\max\{\|k\|_2:k\in K\}=1$. Fix $x=(x_1,x_2)\in\cK\setminus((\{0\}\times\bbZ)\cup(\bbZ\times \{0\}))$ with $x_1, x_2$ coprime. Define $x^\perp=(-x_2,x_1)\in\cK$ and $\alpha_x=\max\{\alpha>0:\alpha x/\|x\|_2\in K\}\le 1$.
    Note that, for every $x$ as above, $\alpha_x\ge 1/\sqrt{2}$: indeed, the property $\max\{\|k\|_2:k\in K\}=1$ and the $\pi/2$-rotational symmetry of $K$ ensure that $K$ contains the ball with radius $1/\sqrt{2}$ centred at the origin.
    Moreover, setting $h_x=|y\in\cK_s:\<y,x^\perp\><0|$ and $l_x=|\cK\cap \bbR x|$, we have $h_x=(|\cK|-l_x)/2$. At the same time, 
    \begin{equation}
    \label{eq:lxl10}l_x=1+2\lfloor s\alpha_x/\|x\|_2\rfloor\le 1+2\lfloor s/\sqrt 2\rfloor\le 1+2\lfloor s\alpha_{(1,0)}\rfloor=l_{(1,0)}.
    \end{equation}
    Together with \eqref{eq:defrs}, this proves the desired expression for $r_s$. Moreover, in \eqref{eq:lxl10}, we cannot have equality if $\|x\|_2\ge \sqrt 5$ since $s/\sqrt 5\le s/\sqrt 2-1$. Therefore, all directions in $S^1\setminus\cS_\boxtimes$ are unstable, while those in $\cS_\square$ are necessarily stable, so the result follows by rotation-invariance.
\end{proof}

\subsection{The proof}\label{sec:theproof}

We next prove Theorem~\ref{th:main} assuming two technical results. The first one, Lemma~\ref{lem:long:rectangle:suffices}, follows from simple ingredients from \cite{Bollobas15a}. Since they will be introduced only in Section~\ref{subsec:extensions}, we delegate the proof of the lemma to Appendix~\ref{app}. Secondly, Proposition~\ref{prop:long:rectangle} is our main technical result and its proof will be the subject of Section~\ref{sec:AL4} for the $\square$ and $\triangle$ models, and of Section~\ref{sec:penetration} for the $\cK_s$ models.

\begin{restatable}{lemma}{lemBSU}
\label{lem:long:rectangle:suffices}
Fix a model $(\cK,r)$ as in Theorem~\ref{th:main}, a suitably small constant $\varepsilon = \eps(\cK, r)>0$, large enough integer $n$ and $d\in[1/\varepsilon^4,\varepsilon n]$. 
Let $R\subset (\bbR/n\bbZ)^2$ be a rectangle with side lengths $\|\cK\|$ and $d$, and sides perpendicular or parallel to 
directions in $\cS=\cS(\cK,r)$. 
Let $A\subset\bbT=(\bbZ/n\bbZ)^2$ satisfy $R\cap\bbZ^2\subset A$. Further suppose that $A$ intersects every (integer point) straight-line segment in $\bbT$ of length $\varepsilon^3 d$ and perpendicular to a direction in $\cS$.
Then, $[A]=\bbT$.
\end{restatable}

Given a real number $\kappa\ge 1$, we say that a set of vertices $V\subset \bbT$ is \emph{$\kappa$-connected} if, for every two vertices $u,v\in V$, there are vertices $v_0 = u, v_1, \ldots, v_k = v$ in $V$ such that $\|v_{i-1}-v_i\|_2\le \kappa$ for all $i\in \{1,\dots,k\}$.

\begin{restatable}{prop}{propmain}
\label{prop:long:rectangle}
Fix a convex $\pi/2$-rotation invariant set $K\subset\bbR^2$ defining the neighbourhood $\cK_s$ for $s\ge 1$. Let $(\cK,r)\in\{(\cK_\square,r_\square),(\cK_\triangle,r_\triangle),(\cK_s,r_s)\}$.
Assume $s$ is large enough and fix a sufficiently large constant $C = C(\cK,r) > 0$. Let $d\ge C^4$ and $\Lambda_d=[-d,d]^2\cap\bbZ^2$. Consider $A\subset\bbZ^2$ such that $A\cap \Lambda_d$ does not contain a $C$-connected set of $8$ sites and $|A\cap \Lambda_{C^3}|\le 1$. If $0\in [A]\setminus[A\cap \Lambda_d]$, then $[A]$ contains a rectangle with dimensions $\|\cK\|\times d/9$ and sides perpendicular to directions in $\cS$.
\end{restatable}

\begin{proof}[Proof of Theorem~\ref{th:main}, assuming Lemma~\ref{lem:long:rectangle:suffices} and Proposition~\ref{prop:long:rectangle}]
Fix a model $(\cK,r)$ as in Theorem~\ref{th:main} and any sufficiently small constant $\varepsilon > 0$ depending on $\cK$ and $r$. 
By~\cite{Bollobas23} (see the proofs of Theorem~4.1 and of Theorem~7.1 therein),
setting\footnote{Here and below, we omit integer parts and ignore related divisibility issues for simplicity.}
\begin{align*}T_-&{}=\frac{\varepsilon n^2}{\log n},& T_+&{}=\frac{n^2}{\varepsilon \log n},&\cE_1&{}=\{T_-<\tau<T_+\},\end{align*}
we have $\bbP(\cE_1)\rightarrow 1$ as $n\to\infty$.

We say that $x\in\bbT$ is \emph{isolated for a set $A\subset\bbT$} if there is no $1/\eps$-connected set $I\subset A$ containing $|I|=8$ sites, each at distance at most $(\log n/\varepsilon)^3$ from $x$.
When $A=A(t)$ for some $t\le n^2$, we say that a site is \emph{$t$-isolated} for short. Note that the property of being $t$-isolated is monotone decreasing: if $t'<t''$ and $x\in\bbT$ is $t''$-isolated, then $x$ is also $t'$-isolated.
By a union bound, for every $x\in\bbT$,
\begin{equation}
\label{eq:Pxtisolated}
\bbP\left(x\text{ is not $(2T_+)$-isolated}\right)\le \frac{5(\log n)^{6}}{\varepsilon^{6}}\cdot \frac{200^7}{\eps^{14}}\cdot\left(\frac{2T_+}{n^2}\right)^{8} \le \frac{1}{2\varepsilon^{29}(\log n)^2} =: q,
\end{equation}
where we used the following facts:
\begin{itemize}
    \item there are at most $(2(\log n/\varepsilon)^3+1)^2\le 5(\log n)^{6}/\eps^{6}$ sites within distance $(\log n/\varepsilon)^{3}$ from $x$ (serving as choices for the first site in $I$);
    \item for each $i\in\{2,\dots,8\}$, there are at most $(1+14/\eps)^2\le 200/\eps^2$ choices for the $i$-th site in $I$;
    \item the probability that each of the chosen sites is in $A(2T_+)$ is bounded from above by
    \[\prod_{i=0}^7\frac{2T_+-i}{n^2-i}\le\left(\frac{2T_+}{n^2}\right)^{8}.\]
    \end{itemize}

Set $p=1.5T_+/n^2$ and define $\tilde A$ to be a random subset of $\bbT$ such that the events $\{x\in\tilde A\}_{x\in \bbT}$ are independent and each of them has probability $p$.
We define the events $\cF_1$ that $T_+\le |\tilde A|\le 2T_+$, and $\cF_2$ that at most $2n^2q$ sites are non-isolated for $\tilde A$. Note that, by~\eqref{eq:Pxtisolated} and Chernoff's inequality (see Theorem~2.1 in \cite{Janson00}), the expected number of non-isolated sites for $\tilde A$ is at most
\[n^2q+n^2\mathbb P(|\tilde A|\ge 2T_+)\le n^2q+n^2\mathbb P(\cF_1^c)=(1+o(1))n^2q\qquad \text{as $n\to\infty$.}\] 
Thus, since resampling the state of a site alters the number of isolated sites for $\tilde A$ by at most $(\log n)^{7}$, an application of the McDiarmid inequality (see~\cite{McDiarmid89}) shows that $\mathbb P(\cF_2^c)=o(1)$ as $n\to\infty$.
Combining this bound with \eqref{eq:Pxtisolated} and the fact that, under the event $\cE_1$, we have $2n^2q\le \tau/(\eps^{30} \log n)$. Thus,
\begin{multline}
\label{eq:PxT}
\bbP\left(\cE_1\cap \left\{\left|\left\{x\in\bbT:x\text{ is not $(\tau-1)$-isolated}\right\}\right|\ge \tau/ \left(\varepsilon^{30
}\log n\right)\right\}\right)\\
\le \bbP\left(\left|\left\{x\in\bbT:x\text{ is not $T_+$-isolated}\right\}\right|\ge 2n^2q\right)\le \mathbb P(\cF_1^c)+\mathbb P(\cF_2^c)=o(1).
\end{multline}

Anticipating the application of Proposition~\ref{prop:long:rectangle}, we call a site $x\in \bbT$ \emph{$t$-lonely} if $|A(t)\cap (x+\Lambda_{1/\varepsilon^3})|\le 1$.
To estimate the number of lonely sites, for every $x\in \bbT$, note that
\begin{equation}
\mathbb P(\{\text{$x$ is not $(2T_+)$-lonely}\})\le \bigg(1+\frac{2}{\varepsilon^3}\bigg)^2 
\left(\frac{2T_+}{n^2}\right)^2 \le \frac{1}{2\eps^{9} (\log n)^2}.
\end{equation}
A similar use of Chernoff and McDiarmid inequalities to the one showing~\eqref{eq:PxT} implies that, for suitably small $\eps$ and $n\to\infty$, 
\begin{equation}\label{eq:2sites}
\bbP\left(\cE_1\cap \bigg\{\left|\left\{x\in\bbT:x\text{ is not $(\tau-1)$-lonely}\right\}\right|\ge \frac{\tau}{\eps^{10}
\log n}\bigg\}\right)=o(1).
\end{equation}

\begin{claim}\label{cl:cases}
For every $(\tau-1)$-lonely $(\tau-1)$-isolated site $x\in\bbT\setminus A(\tau-1)$, we have \[x\not \in\left[A(\tau-1)\cap\left(x+\Lambda_{(\log n/\varepsilon)^3}\right)\right].\]
\end{claim}
\begin{proof}[Proof of Claim~\ref{cl:cases}]
Fix $x$ as required and set $X=A(\tau-1)\cap(x+\Lambda_{(\log n/\varepsilon)^3})$. Assume for a contradiction that $x\in[X]$.
Since $x$ is $(\tau-1)$-isolated, each $1/\varepsilon$-connected component $\cC$ of $X$ contains at most $7$ sites. Since $\cS\supset\cS_\square$ (recall \eqref{eq:stable} and Lemma~\ref{lem:Ksrs}), we have that $[\cC]$ is contained in the set $R$ resulting from the following algorithmic procedure following \cite{Bollobas15a}*{Definition~4.2}. 
Set $R_0=\{\{c\}:c\in \cC\}$. For integer $i\ge 0$, if two distinct elements $\rho,\rho'$ of $R_i$ are at distance at most $2\|\cK\|$, set $R_{i+1}=(R_i\setminus\{\rho,\rho'\})\cup\{\rho\vee\rho'\}$, 
where $\rho\vee\rho'$ is the smallest axis-parallel rectangle containing $\rho\cup\rho'$. 
The pair of elements $\rho,\rho'$ are chosen arbitrarily if multiple choices exist. If no such $\rho,\rho'$ exist, we set $R_{i+1}=R_i$. 
In particular, since $|R_0|=|\cC|$, we have that $(R_i)_{i\ge |\cC|-1}$ is constant.
The set $R$ is defined as the union of the elements of $R_{|\cC|-1}$.
Then, it is not hard to see (e.g.\ following \cite{Bollobas15a}) that $R\supset [\cC]$ and that each $\rho\in R_{|\cC|-1}$ is an axis-parallel rectangle with longest side at most $2^{|\cC|}\|\cK\|-2\|\cK\|$ containing an element of $\cC$. 
Since the distance between connected components is at least $1/\varepsilon> 2^{8}\|\cK\|$, we have that $[X]=\bigcup_\cC [\cC]$. But each $[\cC]$ has diameter at most $8/\varepsilon$, so if $x\in[\cC]\setminus\cC\neq \varnothing$, then $\cC$ contains at least two sites at distance at most $8/\varepsilon$ from $x$. This contradicts the assumption that $x$ is $(\tau-1)$-lonely.
\end{proof}

We next consider the event $\cE_2$ that, for each $u\in\cS$ and (integer point) straight-line segment perpendicular to $u$ of length $\varepsilon (\log n)^3$ with non-empty intersection with $\bbT=(\bbZ/n\bbZ)^2$, that segment contains an element of $A(T_-)$. Using the inclusion $\cS\subset\cS_\boxtimes$ (recall \eqref{eq:stable} and Lemma~\ref{lem:Ksrs}) 
implying that every such segment contains at least $\varepsilon (\log n)^3/2$ integer points, we have
\[\bbP\left(\cE_2^{c}\right)\le |S_{\boxtimes}| n^2\prod_{i=1}^{\varepsilon(\log n)^3/2}\left(1-\frac{T_-}{n^2-i+1}\right)\le 8n^2\left(1-\frac{T_-}{n^2}\right)^{\varepsilon(\log n)^3/2}=o(1)\]
as $n\to\infty$. Consequently, on $\cE_1\cap\cE_2$, any segment as above contains an element of $A(\tau-1)\supset A(T_-)$.

We next argue that, on the event $\cE_1\cap\cE_2$, no rectangle with dimensions $\|\cK\|\times (\log n)^3/\varepsilon^2$ and sides perpendicular or parallel to directions in $\cS$ can be contained in $[A(\tau-1)]$. 
Indeed, by assuming the contrary, the event $\cE_2$ allows us to apply Lemma~\ref{lem:long:rectangle:suffices} with $\eps^3 d = \eps (\log n)^3$ which then implies that $[A(\tau-1)] = \bbT$, contradicting the definition of $\tau$. Combining this fact with Proposition~\ref{prop:long:rectangle} applied with $d=9(\log n)^3/\varepsilon^2$ and Claim~\ref{cl:cases} yields that, on the event $\cE_1\cap\cE_2$, any $x\in [A(\tau-1)]$ is either in $A(\tau-1)$, not $(\tau-1)$-isolated or not $(\tau-1)$-lonely. The conclusion of Theorem~\ref{th:main} is then implied by \eqref{eq:PxT} and \eqref{eq:2sites}.
\end{proof}

\section{The Aizenman--Lebowitz approach}
\label{sec:AL4}
In this section, we prove Proposition~\ref{prop:long:rectangle} in the simpler $\square$ and $\triangle$ cases, using the classical Aizenman--Lebowitz approach \cite{Aizenman88}. 
We say that two distinct sites $x,y\in\bbZ^2$ are \emph{neighbours} if $x-y\in\cK$ (that is, if they are neighbours in the graph sense on the square or triangular lattice). 
Recall that $\cS\subset S^1$ denotes the set of stable directions defined in~\eqref{eq:stable}.
To simplify notation, we will refer to these directions as $\rightarrow,\uparrow,\leftarrow,\downarrow,\nearrow,\swarrow$. For $l = (l_u)_{u\in \cS}\in\bbZ^\cS$, the \emph{droplet} with \emph{radii} $l$ is the set
\[D(l)=\bbZ^2\cap\bigcap_{u\in\cS} \Hb_u(l_u\rho_u),\]
where $\rho_u=1/\min\{x>0:xu\in\bbZ^2\}$ (in particular, $\rho_u=1$ for the axis directions and $\rho_u=1/\sqrt 2$ for the diagonal ones).
Thus, droplets are rectangles (for $\square$) or hexagons (for $\triangle$) with possibly degenerate sides. 
In order to have uniquely defined radii for any non-empty droplet, we always assume $l\in\bbZ^\cS$ above to be minimal coordinate-wise among all radii giving rise to the same droplet.
A droplet $D$ is \emph{internally filled} by $A\subset\bbZ^2$ if
\[[D\cap A]=D.\]
Notice that, since $\cS$ is the set of stable directions, we always have $[D\cap A]\subset D$.

The proof of Proposition~\ref{prop:long:rectangle} hinges on the following elementary but fragile property. While it is classical and was immediately noticed when the models were introduced, we include a proof for completeness.
\begin{lemma}
\label{lem:single:site}
Let $(\cK,r)\in\{(\cK_\square,r_\square),(\cK_\triangle,r_\triangle)\}$. Let $D$ be a droplet and $x\in\bbZ^2$ be a neighbour of $y\in D$. Let $D'$ be the smallest droplet (with respect to inclusion) containing $D$ and $x$. Then, $[D\cup\{x\}]=D'$.
\end{lemma}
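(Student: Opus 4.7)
My plan is to split the proof into the two inclusions $[D\cup\{x\}]\subseteq D'$ and $D'\subseteq[D\cup\{x\}]$. First, if $x\in D$ then $D'=D$ and the conclusion is immediate, so I assume $x\notin D$. For the easy inclusion $[D\cup\{x\}]\subseteq D'$, I would prove the more general fact that every droplet is closed under the dynamics. Indeed, for any $z\notin D'$ there is some $u\in\cS$ with $\<z,u\>>l'_u\rho_u$; writing $(z+\cK)\cap D'\subseteq z+(\cK\cap\bbH_u(l'_u\rho_u-\<z,u\>))\subseteq z+(\cK\cap\bbH_u)$, I deduce that $|(z+\cK)\cap D'|\le|\cK\cap\bbH_u|<r$ by stability of $u$. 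Iterating this observation from the initial infection $D\cup\{x\}\subseteq D'$ shows that no site outside $D'$ is ever infected.

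For the reverse inclusion $D'\subseteq[D\cup\{x\}]$, I would first describe the structure of $D'\setminus D$ and then give an explicit infection order. By the minimality of $D'$, the only radii satisfying $l'_u>l_u$ are those for which $\<x,u\>>l_u\rho_u$; there are at most two such directions, with only one in the $\square$ case and possibly two in the $\triangle$ case (namely, when $y$ lies at a corner of $D$ and $v=x-y$ points strictly outward from both sides of $D$ meeting at $y$). Accordingly, $D'\setminus D$ is either a single row, column, or diagonal of $D'$ adjacent to one side of $D$, or an L-shaped union of two such strips meeting at a common corner containing $x$. Starting the infection at $x$, I would propagate it along this layer one site at a time: every new site has the preceding site in the layer as one infected neighbour together with $r-1$ further infected neighbours from the adjacent fully infected row/column/diagonal of $D$, which together meet the threshold $r$.

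The hardest part will be verifying that this sequential argument still works at the endpoints of $D'\setminus D$ and, in the L-shaped case, at the inner corner of the L. At such endpoints, the adjacent row of $D$ may terminate one step short of the corresponding end of $D'\setminus D$, so one of the expected support neighbours inside $D$ could be missing. I would show that any such ``overhang'' is at most a single site and that the corner site of $D$ where two bounding sides meet provides the missing infected neighbour; this is a routine geometric check relying on the minimality of $D'$ and, in the $\triangle$ case, on the explicit form of $\cK_\triangle$ (in particular, on the neighbour $(1,-1)$ that couples the top side of $D$ with its NE diagonal). The $\square$ case should be short (one direction changes, no overhang), while the $\triangle$ case requires a brief case analysis over the six possible displacements $v\in\cK_\triangle\setminus\{0\}$, with the L-shaped subcase handled by infecting its two arms in turn starting from the common corner at or near $x$.
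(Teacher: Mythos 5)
Your proposal is correct and takes essentially the same route as the paper: the inclusion $[D\cup\{x\}]\subseteq D'$ is immediate from stability of droplets (the paper leaves this implicit), and the reverse inclusion is obtained, just as in the paper, by infecting the one or two new boundary lines of $D'$ one site at a time starting from $x$, each new site being supported by the previously infected site together with its neighbours on the adjacent, fully infected side of $D$. The one caveat is that your deferred ``routine geometric check'' is exactly where the paper's work lies, and it is settled not by minimality of $D'$ or by a corner of $D$ supplying a missing neighbour, but by the coordinate-wise minimality convention for the radii of $D$ (inequalities such as $l_\swarrow\ge l_\leftarrow+1$ and $l_\downarrow\ge l_\rightarrow+1$ in the paper's normalisation), which in fact rules out any overhang of the new line(s) beyond the side of $D$ they rest on.
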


\noindent
Note that $D'$ in Lemma~\ref{lem:single:site} is uniquely defined since the intersection of all droplets containing $D$ and $x$ is a droplet~itself.

\begin{proof}
If $D$ is empty, there is nothing to prove.
Fix a non-empty droplet $D$ with radii $l\in\bbZ^\cS$. We assume that $x\not\in D$, as otherwise $D'=D$ and there is nothing to prove. We begin with the simpler $\square$ case. By the symmetries of the model, we assume that $x=0$, $l_{\uparrow}=-1$, $l_{\downarrow}\ge 1$, $l_\leftarrow\ge 0$, $l_\rightarrow\ge 0$, that is, $x$ is above the top-most line of $D$ and at distance $1$ from it. Then, $l'=(l_\uparrow+1,l_\downarrow,l_\leftarrow,l_\rightarrow)$ defines the desired droplet $D'=D(l')$. It remains to show that $[D\cup\{x\}]\supset \Lambda=\{-l_\leftarrow,\dots,l_\rightarrow\}\times \{0\}$. 
This is the case since $x$ is infected and, as each site in $\Lambda$ has a neighbour in $D$, the induced process on the top-most line $\Lambda$ of $D'$ is a 1-neighbour bootstrap percolation on a connected set.

Moving on to the $\triangle$ case, using the (triangular lattice) symmetries of the model, we assume that $x=0$, $l_\uparrow=-1$, $l_\downarrow\ge 1$, $l_\leftarrow\ge 0$, $l_\rightarrow\ge 0$, $l_\nearrow\ge -1$, $l_\swarrow\ge 1$, that is, the neighbour $y=(0,-1)$ of $x$ lies on the top-most line of $D$.

To begin with, note that the case $l_\leftarrow=l_\rightarrow=0$ is trivial since then $D'=D\cup\{x\}$ and there is nothing to prove. 
Suppose that $l_\leftarrow\ge 1$, so also $l_\swarrow\ge 2$ by minimality of $l$ (if $l_\swarrow=1$, then we could decrease $l_\leftarrow$ to $0$ without changing $D$, since $l_\uparrow=-1$).
The set $\Lambda_\uparrow=\{(y,0):y\in\{-l_\leftarrow,\dots,l_\nearrow\}\}$ becomes infected as above starting from $x$, since each of its sites has two neighbours in $D$. 
We are done unless $x\not\in\Lambda_\uparrow$, that is, if $l_\nearrow=-1$. 
In this case, we can show by induction that the set $\Lambda_\nearrow=\{(z,-z):z\in\{0,\dots,l_\rightarrow\}\}\ni x$ becomes infected. To see this, notice that $\Lambda_\nearrow$ is connected and
\[\Lambda_\nearrow+\{(-1,0),(0,-1)\}=\left\{(z-1,-z):z\in\{0,\dots,l_\rightarrow+1\}\right\}\subset D\cup\{(-1,0)\}\subset D\cup\Lambda_\uparrow,\] 
since $l_\downarrow\ge l_\rightarrow+1$ by minimality (otherwise, decreasing $l_\rightarrow$ by 1 would not change the droplet $D$, since $l_\nearrow=-1$).

Suppose that $l_\leftarrow=0 <l_\rightarrow$ instead. We need two cases again. 
If $l_\nearrow\ge 0$, then it suffices to infect $\Lambda_\uparrow=\{0,\dots,l_\nearrow\}\times\{0\}$: this is possible starting from $x$ since each $z\in\Lambda_\uparrow$ has $z+\{(0,-1),(1,-1)\}\subset D$.
Finally, if $l_\nearrow=-1$, the set $\Lambda_\nearrow=\{(z,-z):z\in\{0,\dots,l_\rightarrow\}\}\ni x$ is infected as above.
\end{proof}

Next, we present a standard algorithm in bootstrap percolation, which is the basis of the Aizenman-Lebowitz approach.

\begin{algorithm}[Droplet algorithm]
\label{algo:droplet}
Fix $(\cK,r)\in\{(\cK_\square,r_\square),(\cK_\triangle,r_\triangle)\}$ and a finite set $A\subset\bbZ^2$. Start with a collection $\cD^{(0)}=\{\{a\}:a\in A\}$ of droplets given by the singletons of $A$.
For integer $t\ge 0$, arbitrarily fix $x\in\bbZ^2$ outside the droplets in $\cD^{(t)}$ and a set of droplets
\[\cD_x \subset \{D\in \cD^{(t)}: D\cap (x+\cK)\neq\varnothing\}\quad\text{such that}\quad \bigg|(x+\cK)\cap\bigcup_{D\in\cD_x} D\bigg|\ge r.\] 
Further, denote by $D_x$ the smallest droplet containing $\{x\}\cup \bigcup_{D\in\cD_x}D$ and set $\cD^{(t+1)}=(\cD^{(t)}\setminus\cD_x)\cup \{D_x\}$. 
When no such $x$ and $\cD_x$ can be found, the algorithm terminates.
\end{algorithm}

\begin{corollary}[Droplet algorithm]
\label{cor:AL}
For $(\cK,r)\in\{(\cK_\square,r_\square),(\cK_\triangle,r_\triangle)\}$ and any finite set $A\subset\bbZ^2$, the final collection of the droplet algorithm consists of droplets which are internally filled (by $A$) with union $[A]$.
\end{corollary}
\begin{proof}
We first prove by induction on the number of steps that all droplets in the current collection are internally filled. To see this, fix the site $x$ and the droplets $D_1,\dots, D_k$ (with $k\ge 1$) 
merged at this step. 
By the induction hypothesis, $\Delta=\bigcup_{i=1}^k D_i\subset[A]$ and $|(x+\cK)\cap\Delta|\ge r$, so $x\in[A]$. Let $(x_i)_{i=1}^m$ be an enumeration of the sites in $\Delta\cup\{x\}$ such that $x_1\in D_1$ and, for every $i\in\{1,\dots,m-1\}$, $x_{i+1}$ has a neighbour among $x_1,\ldots,x_i$. Then, by Lemma~\ref{lem:single:site} and induction, for every $i\in \{1,\dots,m\}$, the smallest droplet $D^{(i)}$ containing $D_1\cup \{x_j:j\le i\}$ for all $i\in\{1,\dots,m\}$ satisfies $D^{(i)}\subset [A\cap\Delta]$.
In particular, $D^{(m)}\supset\Delta$ is internally filled, which completes the induction. 

Recall that, for each droplet $D$, we have $[D]=D$, that is, for any $x\in \bbZ^2\setminus D$, we have $|(x+\cK)\cap D|<r$. Consequently, $\cD_x$ is never be a singleton, so the number of droplets $|\cD^{(t)}|$ is strictly decreasing in $t$. Hence, the algorithm terminates and outputs a final collection $(D_i')_{i=1}^{\ell}$ of droplets. We established that $\Delta'=\bigcup_{i=1}^{\ell} D_i'$ is contained in $[A]$, as all droplets are internally filled. 
However, by construction, $[\Delta']=\Delta'$ (otherwise, the first site to become infected yields a merging step, thus contradicting the assumption that the algorithm has terminated). Since $A\subset \Delta'$, we also have $[A]\subset [\Delta']$. Finally, $A\subset \Delta'=[\Delta']\subset [A]\subset [\Delta']$, which implies that $\Delta'=[A]$ and finishes the proof.
\end{proof}

With Corollary~\ref{cor:AL} at hand, we are ready to prove the easy part of Proposition~\ref{prop:long:rectangle}.
\begin{proof}[Proof of Proposition~\ref{prop:long:rectangle} for $(\cK,r)\in\{(\cK_\square,r_\square),(\cK_\triangle,r_\triangle)\}$.]
First of all, note that some finite subset of $A$ is sufficient to infect the origin. Thus, up to extracting such a subset from $A$ if necessary, assume that $A$ is a finite set.
We apply Corollary~\ref{cor:AL} and let $(D_i)_{i=1}^\ell$ be the final collection of droplets. Since $0\in[A]=\bigcup_{i=1}^\ell D_i$, we may assume that $0\in D_1$. Yet, $0\not\in[A\cap\Lambda_d]$ and $D_1$ is internally filled, so $D_1\not\subset\Lambda_d$. 

We will extract the desired rectangle from $D_1$. Since $\|\cK_\square\|=1$ and $D_1$ is an axis-parallel rectangle, this is always possible for this update family. 
In the $\triangle$ case, $\|\cK_\triangle\|=\sqrt 2$ and the only possible problem we may encounter is $D_1$ being a single diagonal segment.
However, in this case, $D_1$ being internally filled is equivalent to $D_1\subset A$. But $0\in D_1$ then implies $0\in A$, contradicting the assumption $0\not\in[A\cap\Lambda_d]$. 
\end{proof}

\begin{figure}[t]
\centering
\begin{subfigure}{\textwidth}
\begin{tikzpicture}[scale=0.65,x=1cm,y=1cm]
\draw [color=gray, xstep=1cm,ystep=1cm, opacity = 0.2] (-10,-4) grid (13,4);
\clip(-12.3,-4.2) rectangle (17.4,4.2);
\draw [line width=0.5pt, opacity = 0.1] (9,4)-- (13,0);
\draw [line width=0.5pt, opacity = 0.1] (13,0)-- (9,-4);
\draw [line width=0.5pt, opacity = 0.1] (9,-4)-- (5,0);
\draw [line width=0.5pt, opacity = 0.1] (6,1)-- (9,4);
\draw [line width=0.5pt, opacity = 0.1] (5,0)-- (6,1);
\begin{scriptsize}
\draw [fill=black] (-7,4) circle (3.5pt);
\draw [fill=black] (-7,3) circle (3.5pt);
\draw [fill=black] (-6,3) circle (3.5pt);
\draw [fill=black] (-5,2) circle (3.5pt);
\draw [fill=black] (-4,3) circle (3.5pt);
\draw [fill=black] (-3,3) circle (3.5pt);
\draw [fill=black] (-2,3) circle (3.5pt);
\draw [fill=black] (-3,4) circle (3.5pt);
\draw [fill=black] (-1,2) circle (3.5pt);
\draw [fill=black] (0,3) circle (3.5pt);
\draw [fill=black] (1,3) circle (3.5pt);
\draw [fill=black] (2,3) circle (3.5pt);
\draw [fill=black] (1,4) circle (3.5pt);
\draw [fill=black] (3,2) circle (3.5pt);
\draw [fill=black] (4,3) circle (3.5pt);
\draw [fill=black] (5,3) circle (3.5pt);
\draw [fill=black] (6,3) circle (3.5pt);
\draw [fill=black] (5,4) circle (3.5pt);
\draw [fill=black] (7,2) circle (3.5pt);
\draw [fill=black] (8,3) circle (3.5pt);
\draw [fill=black] (9,3) circle (3.5pt);
\draw [fill=black] (10,3) circle (3.5pt);
\draw [fill=black] (9,4) circle (3.5pt);
\draw [fill=black] (11,2) circle (3.5pt);
\draw [fill=black] (12,3) circle (3.5pt);
\draw [fill=black] (13,3) circle (3.5pt);
\draw [fill=black] (13,4) circle (3.5pt);
\draw [fill=blue] (13,0) circle (3.5pt);
\draw [fill=black] (13,-2) circle (3.5pt);
\draw [fill=black] (13,-4) circle (3.5pt);
\draw [fill=black] (11,-2) circle (3.5pt);
\draw [fill=black] (12,-1) circle (3.5pt);
\draw [fill=black] (12,-3) circle (3.5pt);
\draw [fill=black] (11,-4) circle (3.5pt);
\draw [fill=black] (10,-3) circle (3.5pt);
\draw [fill=black] (10,-1) circle (3.5pt);
\draw [fill=black] (9,-2) circle (3.5pt);
\draw [fill=black] (9,-4) circle (3.5pt);
\draw [fill=black] (8,-3) circle (3.5pt);
\draw [fill=black] (8,-1) circle (3.5pt);
\draw [fill=black] (7,-2) circle (3.5pt);
\draw [fill=black] (7,-4) circle (3.5pt);
\draw [fill=black] (6,-3) circle (3.5pt);
\draw [fill=black] (6,-1) circle (3.5pt);
\draw [fill=black] (5,-2) circle (3.5pt);
\draw [fill=black] (5,-4) circle (3.5pt);
\draw [fill=black] (4,-3) circle (3.5pt);
\draw [fill=black] (4,-1) circle (3.5pt);
\draw [fill=black] (3,-2) circle (3.5pt);
\draw [fill=black] (3,-4) circle (3.5pt);
\draw [fill=black] (2,-3) circle (3.5pt);
\draw [fill=black] (2,-1) circle (3.5pt);
\draw [fill=black] (1,-2) circle (3.5pt);
\draw [fill=black] (1,-4) circle (3.5pt);
\draw [fill=black] (0,-3) circle (3.5pt);
\draw [fill=black] (0,-1) circle (3.5pt);
\draw [fill=black] (-1,-2) circle (3.5pt);
\draw [fill=black] (-1,-4) circle (3.5pt);
\draw [fill=black] (-2,-3) circle (3.5pt);
\draw [fill=black] (-2,-1) circle (3.5pt);
\draw [fill=black] (-3,-2) circle (3.5pt);
\draw [fill=black] (-3,-4) circle (3.5pt);
\draw [fill=black] (-4,-1) circle (3.5pt);
\draw [fill=black] (-4,-3) circle (3.5pt);
\draw [fill=black] (-5,-2) circle (3.5pt);
\draw [fill=black] (-5,-4) circle (3.5pt);
\draw [fill=black] (-6,-3) circle (3.5pt);
\draw [fill=black] (-6,-1) circle (3.5pt);
\draw [fill=black] (-7,-2) circle (3.5pt);
\draw [fill=black] (-7,-4) circle (3.5pt);
\draw [fill=black] (-8,-3) circle (3.5pt);
\draw [fill=black] (-8,-1) circle (3.5pt);
\draw [fill=black] (-9,-2) circle (3.5pt);
\draw [fill=black] (-9,-4) circle (3.5pt);
\draw [fill=black] (-8,3) circle (3.5pt);
\draw [fill=black] (-9,2) circle (3.5pt);

\draw [fill=black] (11,4) circle (3.5pt);
\draw [fill=black] (7,4) circle (3.5pt);
\draw [fill=black] (3,4) circle (3.5pt);
\draw [fill=black] (-1,4) circle (3.5pt);
\draw [fill=black] (-5,4) circle (3.5pt);
\draw [fill=black] (-9,4) circle (3.5pt);

\draw (13.95,0) node {\large{$0$}};
\draw (13.95,1) node {\large{$1$}};
\draw (13.95,2) node {\large{$2$}};
55\draw (13.95,3) node {\large{$3$}};
\draw (13.95,4) node {\large{$4$}};
\draw (13.7,-1) node {\large{$-1$}};
\draw (13.7,-2) node {\large{$-2$}};
\draw (13.7,-3) node {\large{$-3$}};
\draw (13.7,-4) node {\large{$-4$}};

\draw  (9,0)-- ++(-3.5pt,0 pt) -- ++(7pt,0 pt) ++(-3.5pt,-3.5pt) -- ++(0 pt,7pt);
\draw  (5,0)-- ++(-3.5pt,0 pt) -- ++(7pt,0 pt) ++(-3.5pt,-3.5pt) -- ++(0 pt,7pt);
\draw  (1,0)-- ++(-3.5pt,0 pt) -- ++(7pt,0 pt) ++(-3.5pt,-3.5pt) -- ++(0 pt,7pt);
\draw  (-3,0)-- ++(-3.5pt,0 pt) -- ++(7pt,0 pt) ++(-3.5pt,-3.5pt) -- ++(0 pt,7pt);
\draw  (-7,0)-- ++(-3.5pt,0 pt) -- ++(7pt,0 pt) ++(-3.5pt,-3.5pt) -- ++(0 pt,7pt);
\draw [fill=black] (-10,3) circle (3.5pt);
\draw [fill=black] (-10,-1) circle (3.5pt);
\draw [fill=black] (-10,-3) circle (3.5pt);

\draw [fill=black] (-10,-4) circle (3.5pt);
\draw [fill=black] (-10,4) circle (3.5pt);

\draw [fill=red] (-5,-3) circle (3.5pt);
\draw  (-5,0) circle (3.5pt);
\draw  (-6,1) circle (3.5pt);
\draw  (-4,1) circle (3.5pt);
\draw  (-1,0) circle (3.5pt);
\draw  (-2,1) circle (3.5pt);
\draw  (0,1) circle (3.5pt);
\draw  (-3,2)-- ++(-3.5pt,-3.5pt) -- ++(7pt,7pt) ++(-7pt,0) -- ++(7pt,-7pt);
\draw  (2,1) circle (3.5pt);
\draw  (3,0) circle (3.5pt);
\draw  (4,1) circle (3.5pt);
\draw  (7,0) circle (3.5pt);
\draw  (8,1) circle (3.5pt);
\draw  (6,1) circle (3.5pt);
\draw  (1,2)-- ++(-3.5pt,-3.5pt) -- ++(7pt,7pt) ++(-7pt,0) -- ++(7pt,-7pt);
\draw  (5,2)-- ++(-3.5pt,-3.5pt) -- ++(7pt,7pt) ++(-7pt,0) -- ++(7pt,-7pt);
\end{scriptsize}
\end{tikzpicture}
\caption{In the figure, the blue dot, the red dot and the black dots correspond to initial infections. With the infection rules of $\square^4$, one may consecutively check the following claims. First, if the blue site (on line 0) were healthy, no further infection would arise. With the current initial conditions, the sites marked by $+$ (referred to as $+$-sites) are consecutively infected from right to left at the beginning. Then, the $\circ$-sites follow: note that the first of them gets infected only when the last two $+$-sites appear. In turn, infecting the $\circ$-sites leads to infecting the $\times$-sites. Finally, by using the three rules of thumb in Figure~\ref{fig:3}, one may easily show that all sites get infected eventually.}
\label{fig:2}
\end{subfigure}
\begin{subfigure}{\textwidth}
\centering
\begin{tikzpicture}[scale=0.5,x=1cm,y=1cm]
\draw [color=gray, xstep=1cm,ystep=1cm, opacity = 0.2] (-9,-3) grid (-3,1);
\draw [color=gray, xstep=1cm,ystep=1cm, opacity = 0.2] (0,-3) grid (6,3);
\draw [color=gray, xstep=1cm,ystep=1cm, opacity = 0.2] (9,-3) grid (13,1);
\clip(-9.1,-4.2) rectangle (13.1,3.1);
\begin{scriptsize}
\draw [fill=black] (-9,0) circle (3.5pt);
\draw [fill=black] (-8,0) circle (3.5pt);
\draw [fill=black] (-7,0) circle (3.5pt);
\draw [fill=black] (-6,0) circle (3.5pt);
\draw [fill=black] (-5,0) circle (3.5pt);
\draw [fill=black] (-4,0) circle (3.5pt);
\draw [fill=black] (-3,0) circle (3.5pt);
\draw [fill=black] (-8,-1) circle (3.5pt);
\draw [fill=black] (-7,-1) circle (3.5pt);
\draw [fill=black] (-6,-1) circle (3.5pt);
\draw [fill=black] (-5,-1) circle (3.5pt);
\draw [fill=black] (-4,-1) circle (3.5pt);
\draw [fill=black] (-7,-2) circle (3.5pt);
\draw [fill=black] (-6,-2) circle (3.5pt);
\draw [fill=black] (-5,-2) circle (3.5pt);
\draw [fill=black] (-6,-3) circle (3.5pt);

\draw (-6,-3.7) node {\large{$\rm{(i)}$}};
\draw (3,-3.7) node {\large{$\rm{(ii)}$}};
\draw (11,-3.7) node {\large{$\rm{(iii)}$}};

\draw  (-6,1)-- ++(-3.5pt,-3.5pt) -- ++(7pt,7pt) ++(-7pt,0) -- ++(7pt,-7pt);
\draw [fill=black] (0,0) circle (3.5pt);
\draw [fill=black] (2,0) circle (3.5pt);
\draw  (3,0)-- ++(-3.5pt,-3.5pt) -- ++(7pt,7pt) ++(-7pt,0) -- ++(7pt,-7pt);
\draw [fill=black] (3,1) circle (3.5pt);
\draw [fill=black] (3,-1) circle (3.5pt);
\draw [fill=black] (4,0) circle (3.5pt);
\draw [fill=black] (1,-1) circle (3.5pt);
\draw [fill=black] (2,-2) circle (3.5pt);
\draw [fill=black] (3,-3) circle (3.5pt);
\draw [fill=black] (4,-2) circle (3.5pt);
\draw [fill=black] (5,-1) circle (3.5pt);
\draw [fill=black] (6,0) circle (3.5pt);
\draw [fill=black] (5,1) circle (3.5pt);
\draw [fill=black] (4,2) circle (3.5pt);
\draw [fill=black] (3,3) circle (3.5pt);
\draw [fill=black] (2,2) circle (3.5pt);
\draw [fill=black] (1,1) circle (3.5pt);
\draw  (13,1)-- ++(-3.5pt,-3.5pt) -- ++(7pt,7pt) ++(-7pt,0) -- ++(7pt,-7pt);
\draw [fill=black] (12,1) circle (3.5pt);
\draw [fill=black] (11,1) circle (3.5pt);
\draw [fill=black] (10,1) circle (3.5pt);
\draw [fill=black] (9,1) circle (3.5pt);
\draw [fill=black] (10,0) circle (3.5pt);
\draw [fill=black] (11,0) circle (3.5pt);
\draw [fill=black] (12,0) circle (3.5pt);
\draw [fill=black] (13,0) circle (3.5pt);
\draw [fill=black] (13,-1) circle (3.5pt);
\draw [fill=black] (13,-2) circle (3.5pt);
\draw [fill=black] (13,-3) circle (3.5pt);
\draw [fill=black] (12,-2) circle (3.5pt);
\draw [fill=black] (12,-1) circle (3.5pt);
\draw [fill=black] (11,-1) circle (3.5pt);
\end{scriptsize}
\end{tikzpicture}
\caption{In subfigures (i) (resp.\ (ii) and (iii)) one has 16 (resp.\ 16 and 14) infected sites at $\ell^1$-distance at most 4 from the $\times$-site. Thus, to infect the $\times$-site, we need one (resp.\ one and three) additional infected sites within its fourth neighbourhood.}
\label{fig:3}
\end{subfigure}
\caption{Counterexample for the droplet algorithm for $\square^4$.}
\label{fig:2,3}
\end{figure}
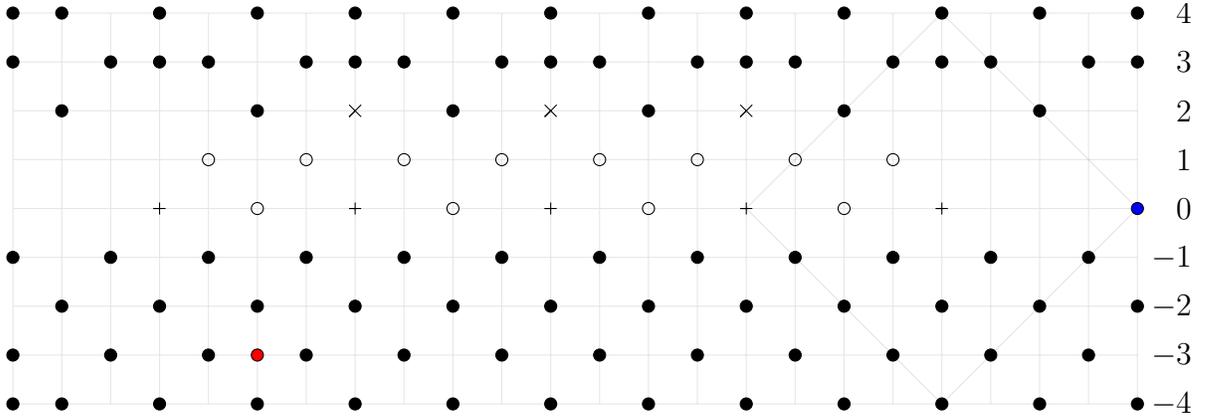
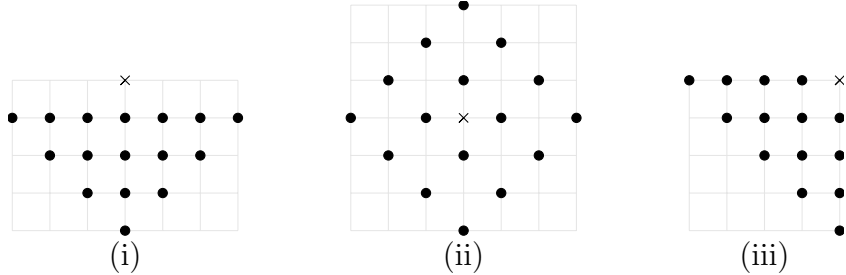

Let us make a few comments before moving on to the more difficult part of Proposition~\ref{prop:long:rectangle}. The only fragile part of the above proof is Lemma~\ref{lem:single:site}, which looks completely innocent for $\square$ but already a bit fiddly for $\triangle$. 
For the 4-neighbour model on the square-diagonal lattice (the $\boxtimes$ model), it is already false. While for this case a more complicated version of this lemma (and the main result) can be proved, the verifications become more laborious. We next show that no version of Lemma~\ref{lem:single:site} or Corollary~\ref{cor:AL} can hold even for slightly larger neighbourhoods.

One such example is the model defined by
\begin{align*}
\cK_{\square^4} &{}= \left\{x\in\bbZ^2:\|x\|_1\le 4\right\},&r_{\square^4}&{}=17,\end{align*}
which is $(\cK_s,r_s)$ for $s=4$ and $K$ given by the $\ell^1$ ball. 
Indeed, in the initial configuration in Figure~\ref{fig:2}, one can notice that, without the blue site, no further infection takes place while, without the red site, only the sites marked with $+$ get infected throughout the process. 
Hence, despite the fact that all sites get infected eventually, all internally filled droplets must contain both the blue and the red site. 
Thus, the diameter of every internally filled droplet is comparable to the size of the entire rectangle. Note that this example belongs to the family of isotropic threshold rules with difficulty 1 studied in~\cite{Duminil-Copin24}.

\section{Penetrating sparsely infected regions}
\label{sec:penetration}
In this section, we fix a $\pi/2$-rotation invariant closed convex $K\subset\bbR^2$ with $\max\{\|k\|_2:k\in K\}=1$ and the associated model $(\cK,r)=(\cK_s,r_s)$ from Section~\ref{subsec:models}. Throughout, we systematically assume $s$ to be large enough. 
Next, we argue that
\begin{equation}\label{eq:sizeK}
2r+1\le |\cK|\le 2r+2s-1\qquad \text{and}\qquad s^2+2s\le (\pi/2+o(1))s^2\le |\cK|\le (\pi+o(1))s^2\le 4s^2,
\end{equation}
where the $o(1)$ above is with respect to $s\to\infty$. In particular,~\eqref{eq:sizeK} implies that
\begin{equation}
\label{eq:r:s:bounds}
r\in \left[s^2/2, 2s^2\right].
\end{equation}
The first chain of inequalities in~\eqref{eq:sizeK} follows from Lemma~\ref{lem:Ksrs} and the fact that the horizontal coordinate axis intersects $\cK$ in at least $3$ and at most $2s+1$ sites. The second chain of inequalities in~\eqref{eq:sizeK} follows from the fact that $\cK$ is contained in a ball with radius $s$ and furthermore, by the $\pi/2$-rotation invariance and convexity of $K$, $\cK$ contains all integer points in the ball of radius $s/\sqrt 2$.

\subsection{Infecting a large square}
\label{subsec:seed}

Recall that, for every $t\ge 0$, we write $\Lambda_t = [-t,t]^2\cap \bbZ^2$. The first step in the proof of Proposition~\ref{prop:long:rectangle} consists in showing the following lemma.

\begin{lemma}\label{lem:big square}
Fix $C_2 > 0$. For every suitably large $C_1=C_1(C_2)>0$, there is $s_0 = s_0(C_1,C_2)$ such that the following holds as long as $s\ge s_0$.
Fix a set $A\subset \bbZ^2$ satisfying $\Lambda_{C_1s}\cap A=\varnothing$ and $0\in[A]$. Then, $\Lambda_{C_1s}\cap [A]$ contains an axis-parallel square of side length $C_2 s$.
\end{lemma}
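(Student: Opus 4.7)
The plan is a two-step covering argument. Call $x \in [A] \cap \Lambda_{C_1 s}$ \emph{good} when $|(x + \cK_s) \setminus [A]| \le \alpha s$ for a small constant $\alpha = \alpha(K) > 0$. The first step, purely geometric, asserts that any good $x$ has the full lattice ball $B(x, s/50) \cap \bbZ^2$ inside $[A]$. For $z \in \bbZ^2$ with $\|z - x\|_2 \le s/50$, convexity of $K$ together with $\cK_s$ having diameter $2s$ give $|(x+\cK_s) \setminus (z+\cK_s)| = O(s \cdot s/50)$ (the implied constant depending only on $K$), so
\[|(z + \cK_s) \cap [A]|  \ge |(x + \cK_s) \cap [A]| - O(s^2/50) \ge |\cK_s| - \alpha s - O(s^2/50) \ge r_s\]
for large enough $s$, using the area lower bound $|\cK_s| \ge (2+o(1))s^2$ from the $\pi/2$-rotation invariance of $K$ and $r_s \le |\cK_s|/2 + O(s)$; since $[A]$ is closed under the dynamics, $z \in [A]$.

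The combinatorial core is to bound the number $N$ of sites in $\Lambda_{C_1 s}$ that are either outside $[A]$ or in $[A]$ but not good by $N < |B(0, s/50) \cap \bbZ^2| \sim \pi s^2/2500$, once $C_1$ is taken large enough in terms of $C_2$, $\alpha$, and $K$. The engine is a double count of edges across $\partial[A]$ inside $\Lambda_{C_1 s}$: since $A \cap \Lambda_{C_1 s} = \varnothing$, every $x \in [A] \cap \Lambda_{C_1 s} \subset [A] \setminus A$ has at most $|\cK_s| - r_s = |\cK_s|/2 + O(s)$ non-$[A]$ $\cK_s$-neighbours, whereas every $y \notin [A]$ has at most $r_s - 1 = |\cK_s|/2 - O(s)$ $\cK_s$-neighbours in $[A]$. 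Comparing the two bounds exploits the $\Theta(s)$ additive gap between them and the fact that the boundary layer of $\Lambda_{C_1 s}$ has area $O(C_1 s^2)$, negligible compared with the interior of area $\Theta(C_1^2 s^2)$.

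Given these two steps, the conclusion is immediate. Take $Q$ to be the axis-parallel square of side $C_2 s$ centred at the origin, which lies inside $\Lambda_{C_1 s - s}$ since $C_1 \ge C_2 + 1$; for any $z \in Q \cap \bbZ^2$, the ball $B(z, s/50) \cap \bbZ^2$ contains more than $N$ lattice sites, hence meets some good $x \in [A] \cap \Lambda_{C_1 s}$, and then $z \in B(x, s/50) \cap \bbZ^2 \subset [A]$ by the geometric step, so $Q \cap \bbZ^2 \subset [A]$. The main obstacle is the combinatorial step: a naive use of the two inequalities alone only yields a bound on $N$ proportional to $|\Lambda_{C_1 s}|$, much too weak. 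Extracting the required $o(s^2)$ count seems to need additional input, for instance the observation that every non-$[A]$ site in the interior of $\Lambda_{C_1 s}$ has at least $|\cK_s|/2 + \Omega(s)$ non-$[A]$ $\cK_s$-neighbours, forcing it to lie in a macroscopically thick non-$[A]$ cluster that is hard to sustain without reaching $\partial \Lambda_{C_1 s}$.
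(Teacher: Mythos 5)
There is a genuine gap, and it sits exactly where you flag it: the ``combinatorial core'' is not only unproven but is aiming at a statement that is false. You want to show that the number $N$ of sites of $\Lambda_{C_1 s}$ lying outside $[A]$ (or in $[A]$ but not good) is smaller than $|B(0,s/50)\cap\bbZ^2|=O(s^2)$, and you then conclude that the square of side $C_2s$ \emph{centred at the origin} is fully infected. But under the hypotheses of the lemma, $\Lambda_{C_1 s}\setminus[A]$ can have size of order $(C_1 s)^2$: since axis directions are stable for these models, infection entering from outside $\Lambda_{C_1 s}$ can sweep a half-plane-type region containing the origin and then stop at a horizontal line a bounded distance below $0$ (e.g.\ initial infections filling $\{y>C_1s\}$ together with $\{x>C_1s,\ y\ge -c\}$ produce a closure that contains $0$ but misses essentially all of $\{y<-c\}\cap\Lambda_{C_1s}$). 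In such configurations the lemma's conclusion holds (a fully infected $C_2s$-square exists somewhere in $\Lambda_{C_1s}$), but your target statements do not: $N$ is macroscopic and the square centred at $0$ is not contained in $[A]$. So no refinement of the two degree inequalities can deliver $N=o(s^2)$, and the suggested ``additional input'' (thick non-$[A]$ clusters are hard to sustain) fails precisely because such clusters reaching $\partial\Lambda_{C_1s}$ are perfectly sustainable. Your first, geometric step (good sites force a fully infected ball of radius $s/50$) is fine and matches the paper's Lemma~\ref{lem:good1}, but it is the cheap half of the argument.

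The paper's proof avoids the global count altogether. It only bounds the \emph{density of bad vertices relative to $[A]\cap\Lambda_l$}, for a well-chosen radius $l\in[C_1s/2,C_1s]$ (Lemma~\ref{lem:good2}); the double counting is done on the directed infection graph (each site of $[A]\setminus A$ has exactly $r$ in-edges, giving a genuine lower bound on the edge count), and the boundary term is beaten by a two-case annulus argument: either every annulus carries many infected sites, or some annulus is sparse and one restricts to the box it bounds, using that every inner annulus must contain an element of $[A]$ to lower bound $|[A]\cap\Lambda_{is}|$ by $\Theta(i r)$. The conclusion is then obtained \emph{locally}: tessellate $\Lambda_l$ into $C_2s$-squares, and if no square is fully infected, pick a square meeting $[A]$ with high good-density, take a good $u$ and a nearest non-infected $v$ in that square, and observe that a semicircle of $\Omega(s^2)$ sites near $v$ must consist of bad infected vertices — contradicting the density bound. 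This localisation (deriving the contradiction inside one square that contains both infected and non-infected sites, rather than forcing $[A]$ to cover almost all of $\Lambda_{C_1s}$) is the idea your plan is missing and cannot do without.
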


The proof of Lemma~\ref{lem:big square} is based on a double-counting technique using the following concept. 

\begin{definition}[Good vertex]
Fix $A\subset\bbZ^2$ and consider $(\cK,r)$-bootstrap percolation with initial condition $A$ whose state at time $\theta$ is denoted $A_\theta$, as defined in \eqref{eq:def:BP}.
We define a directed graph with vertex set $\bbZ^2$ by constructing a directed edge $uv$ if $u$ is infected before $v$ and is then used to infect $v$. More formally, for each $v\in A_{\theta+1}\setminus A_{\theta}$ with $\theta\ge 0$, we arbitrarily select a set $\cK_v\subset A_{\theta}\cap (\cK+v)$ with $|\cK_v|=r$ and construct the directed edges $\{uv:u\in\cK_v\}$.  
We say that a vertex $u\in [A]$ is \emph{good} if its out-degree in this graph is at least $0.9 r$, and \emph{bad} otherwise.
\end{definition}

In the remainder of Section~\ref{subsec:seed}, we work with respect to a fixed set of infections $A\subset \mathbb Z^2$.
Our next lemma shows that good vertices are surrounded by a small fully infected ball.

\begin{lemma}\label{lem:good1}
Every vertex at distance at most $s/50$ from a good vertex in $[A]\setminus A$ belongs to $[A]$.
\end{lemma}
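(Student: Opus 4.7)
The approach I propose combines a density estimate on $(u+\cK)\cap[A]$ coming from the definition of \emph{good} with a standard shift bound for convex neighbourhoods; the $\pi/2$-rotation invariance of $K$, applied twice, yields $-K=K$, hence $-\cK=\cK$, and this symmetry is used throughout.

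First, I claim that if $u\in[A]\setminus A_0$ is good, then $|(u+\cK)\cap[A]|\ge 1.9r$. Indeed, the out-neighbourhood $V_u=\{v:u\in\cK_v\}$ satisfies $v-u\in-\cK=\cK$, so $V_u\subset u+\cK$; by construction $V_u\subset[A]$, and $|V_u|\ge 0.9r$ by goodness. The in-witness set $\cK_u\subset A_t\cap(u+\cK)$ (where $t+1$ is the time at which $u$ gets infected) is likewise contained in $(u+\cK)\cap[A]$ and has size $r$. The sets $\cK_u$ and $V_u$ are disjoint: every element of $\cK_u$ lies in $A_t$ while every $v\in V_u$ is infected strictly after $u$. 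Adding gives $|(u+\cK)\cap[A]|\ge 1.9r$, so in view of the bound $|\cK|\le 2r+2s-1$ recorded at the start of Section~\ref{sec:penetration}, at most $0.1r+2s-1$ sites of $u+\cK$ are missing from $[A]$. The corner case $u\in A_0$ is not needed in the intended application of Lemma~\ref{lem:big square} (since $\Lambda_{C_1s}\cap A=\varnothing$ forbids initial infections near the origin); otherwise one can reduce to the main case by passing to any $v\in V_u$, which automatically lies in $[A]\setminus A_0$, and repeating the density argument there.

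Second, for any $w\in\bbZ^2$ with $\|w-u\|_2\le s/50$, I bound $|(u+\cK)\setminus(w+\cK)|$ by a standard sliver estimate. By convexity of $K$ with $\max\{\|k\|_2:k\in K\}=1$ (so $\mathrm{diam}(K)\le 2$), the planar area of $sK\setminus(sK-(u-w))$ is at most $\|u-w\|_2\cdot 2s\le s^2/25$; adding lattice corrections gives $|(u+\cK)\setminus(w+\cK)|\le s^2/25+O(s)$. Combining with the previous step,
$$
|(w+\cK)\cap[A]|\ \ge\ |(u+\cK)\cap[A]|-|(u+\cK)\setminus(w+\cK)|\ \ge\ 1.9r-\tfrac{s^2}{25}-O(s)\ \ge\ r,
$$
where the last inequality uses $r\ge s^2/2$ from~\eqref{eq:r:s:bounds}, which guarantees $0.9r\ge \tfrac{9s^2}{20}\gg \tfrac{s^2}{25}+O(s)$ for $s$ large. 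Hence $w$ has at least $r$ infected sites in $w+\cK$, and therefore $w\in[A]$, as required. The main obstacle is tracking these constants precisely: the cut-off $0.9$ in the definition of good, the target radius $s/50$, and the inequality $|\cK|\le 2r+O(s)$ must line up so that the density deficit $0.1r$ plus the shift loss $s^2/25$ together stay well below the margin $|\cK|-r\le r+2s-1$, which works thanks to the generous slack provided by the lower bound $r\ge s^2/2$.
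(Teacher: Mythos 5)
Your proof is correct and essentially the same as the paper's: both combine the density bound $|(u+\cK)\cap[A]|\ge 1.9r$ (the $r$ in-witnesses and the $\ge 0.9r$ out-neighbours, disjoint by infection times, all lying in $u+\cK$ by symmetry of $\cK$) with a sliver estimate for the shifted neighbourhood and the bound $r\ge s^2/2$; the paper merely bounds the sliver discretely by $(2s+1)s/25$ via a lattice path instead of your area-plus-boundary-correction argument. One caveat on your aside for $u\in A_0$: passing to some $v\in V_u$ does not repair the argument, since $v$ need not be good and the $1.9r$ count is then unavailable — but, as you correctly note, this case never arises in the application (and the paper's own proof tacitly assumes it away by counting the $r$ in-witnesses of $u$).
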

\begin{proof}
The set $\cK$ has vertices on at most $2s+1$ different rows and columns of $\mathbb Z^2$. Thus, for every two vertices $u,w\in \mathbb Z^2$ at distance 1, $|(u+\cK)\setminus (w+\cK)|\le 2s+1$.

Fix a good vertex $u\in [A]\setminus A$ and a vertex $v$ at distance at most $s/50$ from $u$. Then, there in a path in $\mathbb Z^2$ of length at most $s/25$ between $u$ and $v$. The previous observation implies that $|(u+\cK)\setminus (v+\cK)|\le (2s+1)s/25$ and, as a result,
\[|(v+\cK)\cap [A]|\ge |(u+\cK)\cap [A]| - (2s+1)s/25\ge r+1+0.9r-s^2/10\ge r+1,\]
where the last inequality uses \eqref{eq:r:s:bounds}. Thus, $v\in [A]$, as desired.
\end{proof}

In order to increase the size of the infected ball, we next show that most of the vertices in $[A]$ in a large square box $\Lambda$ avoiding initial infections are good. The proof idea is the following. Suppose that the centre of $\Lambda$ becomes infected. Each infected vertex in $\Lambda$ (which belongs to $[A]\setminus A$) has at least $r$ incoming edges and at most $|\cK|-r-1\approx r$ outgoing ones. 
By double counting this would imply that most infected sites are good, unless boundary effects interfere. If there are few infections along the boundary, we are done. If there is a concentric square along whose boundary there are few infections, we conclude similarly. 
We are left with the case when there are a lot of infections at the boundary of each concentric square, in which case they largely outnumber the maximal possible boundary effect.

\begin{lemma}\label{lem:good2}
Fix any suitably large constant $C_1 > 0$. Then, for every $s$ large enough and any set $A\subset\bbZ^2$ satisfying $\Lambda_{C_1s}\cap A=\varnothing$ and $0\in [A]$, there exists $l\in[C_1s/2,C_1s]$ such that 
\begin{equation}
\label{eq:bad:density}\frac{|\{v\in [A]\cap\Lambda_{l}:v\text{ is bad}\}|}{|[A]\cap\Lambda_l|}\le \frac{10}{C_1^{1/3}}.
\end{equation}
\end{lemma}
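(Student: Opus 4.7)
The plan is to prove Lemma~\ref{lem:good2} by a double-counting estimate on the directed graph $G$, followed by a pigeonhole argument on the level sets $V_l:=[A]\cap\Lambda_l$ that selects the right threshold $l$.

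First, fix $l \in [C_1s/2, C_1s]$ and let $\alpha(l)$ denote the fraction of bad vertices in $V_l$. Because $\Lambda_{C_1s}\cap A=\varnothing$, every vertex of $V_l$ is non-initial, hence has in-degree exactly $r$ in $G$. For each $v\in V_{l-s}$ we have $v+\cK\subset\Lambda_l$, so all $r$ in-neighbours of $v$ lie in $V_l$; summing yields $\sum_{u\in V_l}\deg^+(u)\ge r|V_{l-s}|$. On the other hand, each $u\in V_l$ has at least $r$ strictly earlier $\cK$-neighbours (its $\cK_u$) together with itself, so $\deg^+(u)\le M:=|\cK|-r-1\le r+2s$ by the discussion around \eqref{eq:r:s:bounds}. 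In particular $M-r\le 2s$ and $M-0.9r\ge r/10\ge s^2/20$. Splitting the upper bound according to good and bad vertices gives $r|V_{l-s}|\le 0.9r\alpha(l)|V_l|+M(1-\alpha(l))|V_l|$, which rearranges to
\[
\alpha(l)\le \frac{M-r}{M-0.9r}+\frac{r}{M-0.9r}\cdot \frac{|V_l\setminus V_{l-s}|}{|V_l|}\le \frac{40}{s}+10\cdot \frac{|V_l\setminus V_{l-s}|}{|V_l|}.
\]

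Second, I locate $l$ with $|V_l\setminus V_{l-s}|/|V_l|\le 1/(2C_1^{1/3})$ by a growth-contradiction argument along $l_k=C_1s/2+ks$ for $k=0,\dots,C_1/2$. Suppose for contradiction that $|V_{l_k}\setminus V_{l_{k-1}}|/|V_{l_k}|>1/(2C_1^{1/3})$ for every $k\ge 1$; rearranging yields $|V_{l_k}|/|V_{l_{k-1}}|\ge 1+1/(2C_1^{1/3})$, and iterating across all $C_1/2$ steps gives
\[
\frac{|V_{C_1s}|}{|V_{C_1s/2}|}>\left(1+\frac{1}{2C_1^{1/3}}\right)^{C_1/2}\ge \exp\!\left(\frac{C_1^{2/3}}{8}\right)
\]
for $C_1$ sufficiently large. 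The critical input is a matching-order lower bound on $|V_{C_1s/2}|$: since $0\in [A]\setminus A$, its $r$ in-neighbours all lie in $0+\cK\subset \Lambda_s$, so $|V_{C_1s/2}|\ge |V_s|\ge r\ge s^2/2$ as soon as $C_1\ge 2$. Combined with the trivial $|V_{C_1s}|\le (2C_1s+1)^2\le 5C_1^2s^2$, this caps the ratio by $10C_1^2$ independently of $s$, forcing $10C_1^2>\exp(C_1^{2/3}/8)$, which fails for every sufficiently large $C_1$. The resulting $l=l_k$ satisfies $\alpha(l)\le 40/s+5/C_1^{1/3}\le 10/C_1^{1/3}$ as soon as $s\ge s_0(C_1):=8C_1^{1/3}$, yielding \eqref{eq:bad:density}.

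The hard part is not the double-counting, which is essentially the idea outlined in Section~\ref{subsec:seed}, but the pigeonhole step. A naive averaging over $l$ would produce a factor of $\log|V_{C_1s}|=\Theta(\log(C_1s))$, which degrades with $s$ and would only close the argument when $s$ is bounded in terms of $C_1$ — incompatible with the ``for every $s$ large enough'' quantifier. The elementary lower bound $|V_{C_1s/2}|\ge s^2/2$, coming from the $r$ in-neighbours of the origin, is what removes this $s$-dependence and makes the geometric-growth contradiction work uniformly in $s$.
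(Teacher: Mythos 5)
Your proposal is correct, and it takes a genuinely different route from the paper's proof, though both share the same double-counting core: every vertex of $[A]\cap\Lambda_{C_1s}$ is non-initial, hence has in-degree exactly $r$, while its out-degree is at most $|\cK|-r-1\le r+2s$ (this uses $\cK=-\cK$ and $0\in\cK$, which hold by the $\pi/2$-rotation invariance and convexity of $K$, and is exactly the bound the paper asserts in its outline), so by \eqref{eq:r:s:bounds} each bad vertex creates a deficit of order $r$. The paper closes the count by contradiction via a dichotomy on the occupancy of the annuli $\Lambda_{(i+1)s}\setminus\Lambda_{is}$: if all relevant annuli are dense, it works in the full box $\Lambda_{C_1s}$ and absorbs the edges crossing its boundary by a crude bound of the form (size of outer annulus)$\cdot|\cK|$; otherwise it cuts at a sparse annulus $R_i$, bounds the boundary edges by $as^2|\cK|$, and lower-bounds the infected mass inside by $ir/4$ using $\lfloor i/3\rfloor$ well-separated infected sites with disjoint in-neighbourhoods (infection must cross every annulus to reach $0$). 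You instead count only the edges entering the smaller box $\Lambda_{l-s}$, whose in-neighbourhoods lie entirely in $\Lambda_l$, so no boundary term appears at all; the price is the annulus ratio $|V_l\setminus V_{l-s}|/|V_l|$ (with your $V_l=[A]\cap\Lambda_l$), which you control by a multiplicative pigeonhole over the $C_1/2$ scales $l_k=C_1s/2+ks$, anchored by $|V_{C_1s/2}|\ge r\ge s^2/2$ coming from the $r$ in-neighbours of the origin. That anchor is a simpler stand-in for the paper's crossing argument and, as you correctly observe, is precisely what makes the geometric-growth contradiction uniform in $s$. Your route is cleaner and case-free, with no bookkeeping of edges from outside the box; the paper's sparse-annulus case delivers a stronger mass bound ($\Theta(ir)$ rather than $r$), but that extra strength is not needed for \eqref{eq:bad:density}. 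Like the paper, you may silently ignore integer-part issues (e.g.\ taking $C_1/2$ steps of width exactly $s$); everything else checks out, including the final verification that $40/s+5/C_1^{1/3}\le 10/C_1^{1/3}$ once $s\ge 8C_1^{1/3}$, which is compatible with the lemma's quantifier ``for every $s$ large enough''.
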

\begin{proof}
Define $\alpha = 10/C_1^{1/3}$, $a = C_1^{3/5}$ and the annuli $R_i = \Lambda_{(i+1)s}\setminus\Lambda_{is}$.
We will show the statement of the lemma by contradiction. We consider two cases.

\vspace{1em}

\noindent
\textbf{Case 1.} Suppose that each of the annuli $R_{C_1/2}, \ldots, R_{C_1-2}$ contains at least $a s^2$ vertices in $[A]$.
Let $I=[A]\cap\Lambda_{C_1s}$. On the one hand, the number of edges in the graph $G$ coming into $I$ is at least $r|I|$. 
On the other hand, each such edge either crosses the boundary of $\Lambda_{(C_1-1)s}$ or goes out of a vertex in $I$. 
As a result, the number of these edges is bounded from above by
\[\left|R_{C_1-1}\right|\cdot |\cK| + 0.9 r\cdot \alpha |I| + (|\cK|-r-1)\cdot (1-\alpha) |I|.\]
Moreover, by combining the upper bounds in~\eqref{eq:sizeK} and the inequality $|R_{C_1-1}|\le 4(C_1+1)(s+1)^2\le 5C_1 s^2$, the latter expression is bounded from above by
\begin{equation}\label{eq:inf_bound}
20C_1 s^4 + 0.9 r\cdot \alpha |I| + (r+2s)\cdot (1-\alpha)|I|\le 20C_1 s^4 + r |I| + (2s - \alpha r/10)|I|.
\end{equation}
By choosing $s^2\ge C_1$, we deduce that $\alpha r/10\ge 4s$. Hence, the difference between~\eqref{eq:inf_bound} and $r|I|$ is bounded from above by
\[20C_1 s^4 - \alpha r|I|/20\le 20C_1 s^4 - \alpha (C_1-1-C_1/2) as^4/40\le (20C_1 - C_1^{19/15}/12) s^4 < 0\]
for large enough $C_1$, which leads to a contradiction.

\vspace{1em}

\noindent
\textbf{Case 2.} There is $i\in [C_1/2, C_1-2]$ such that $R_i$ contains less than $a s^2$ vertices in $[A]$. Set $I=[A]\cap\Lambda_{is}$.
We begin by bounding the size of $|I|$ from below. To this end, observe that, since $0$ becomes infected eventually, each annulus among $R_0,\ldots,R_{i-1}$ must contain a vertex in $[A]$.
Consider a subset of $\lfloor i/3\rfloor$ such vertices that are pairwise at distance more than $2s$ from each other.
Then, the in-neighbourhoods of these vertices must be disjoint, so $|I|\ge \lfloor i/3\rfloor (r+1)\ge ir/4$.
By double-counting the edges of $G$ towards a vertex in $\Lambda_{is}$ as before and using the second upper bound in~\eqref{eq:sizeK}, we conclude that
\[r|I|\le a s^2\cdot |\cK| + r|I|-\alpha r|I|/20\le 4as^4 + r|I| - ir^2/(8C_1^{1/3}).\]
However, using that $i\ge C_1/3$ and \eqref{eq:r:s:bounds} leads to a contradiction in this case as well, as desired.
\end{proof}

We are ready to prove Lemma~\ref{lem:big square}.

\begin{figure}
\centering
\begin{minipage}{.45\textwidth}
  \centering
    \begin{tikzpicture}
        \draw (0,1)--(4,1)--(4,5)--(0,5)--cycle;
        \draw (1,2) node[left]{$u$}--(0.5,4) node[left]{$v$};
        \draw [shift={(0.6,3.6)},fill=black,fill opacity=0.3]  (0,0) --  plot[domain=-1.33:1.82,variable=\t]({1*0.21*cos(\t r)+0*0.21*sin(\t r)},{0*0.21*cos(\t r)+1*0.21*sin(\t r)}) -- cycle ;
        \draw [shift={(0.4,0.1)},<->] (0.5,4)--(0.7,3.2) node[midway,right]{$s/50$};
        \draw (2,3) node{$R$};
    \end{tikzpicture}
  \captionof{figure}{Illustration of the proof of Lemma~\ref{lem:big square}. Points in the shaded semi-circle are closer to $u$ than $v$ is, their distance from $v$ is less than $s/50$ and they lie in the rectangle $R$.}
  \label{fig:good2}
\end{minipage}
\qquad
\begin{minipage}{.45\textwidth}
  \centering
    \begin{tikzpicture}[scale=0.7,line cap=round,line join=round,x=1cm,y=1cm]
\clip(-6,-3.2) rectangle (12.716038947045966,3.2);
\fill[line width=0pt,color=black,fill=black,fill opacity=0.1] (-2.3333347838492724,0.9999956484521824) -- (1.4907692307692308,2.0061538461538464) -- (0,3) -- (-2,2) -- cycle;
\draw [line width=0.5pt] (2,-2)-- (3,1);
\draw [line width=0.5pt] (3,1)-- (0,3);
\draw [line width=0.5pt] (0,3)-- (-2,2);
\draw [line width=0.5pt] (-2,2)-- (-3,-1);
\draw [line width=0.5pt] (0,-3)-- (-3,-1);
\draw [line width=0.5pt] (0,-3)-- (2,-2);
\draw [line width=0.5pt] (-2.3333347838492724,0.9999956484521824)-- (1.4907692307692308,2.0061538461538464);
\draw [line width=0.5pt] (-1.4907692307692308,-2.0061538461538464)-- (2.3333347838492724,-0.9999956484521824);
\draw [line width=0.5pt] (2.3333347838492724,-0.9999956484521824)-- (1.4907692307692308,2.0061538461538464);
\draw [line width=0.5pt] (-1.4907692307692308,-2.0061538461538464)-- (-2.3333347838492724,0.9999956484521824);
\draw [line width=0.5pt] (0,0)-- (-0.4212827765400208,1.5030747473030144);
\draw [line width=0.5pt,dash pattern=on 1.9pt off 1.9pt] (-1.9120520073092515,-0.503079098850832)-- (1.9120520073092515,0.503079098850832);
\draw [line width=0.5pt,dash pattern=on 1.9pt off 1.9pt] (1.9120520073092515,0.503079098850832)-- (0.42128277654002094,1.4969252526969856);
\draw [line width=0.5pt,dash pattern=on 1.9pt off 1.9pt] (0.42128277654002094,1.4969252526969856)-- (-1.5787172234599791,0.49692525269698573);
\draw [line width=0.5pt,dash pattern=on 1.9pt off 1.9pt] (-1.5787172234599791,0.49692525269698573)-- (-1.9120520073092515,-0.503079098850832);
\begin{scriptsize}
\draw [fill=black] (0,0) circle (1.9pt);
\draw (0.13169139356578016,0.30863562750440965) node {$0$};
\draw [fill=black] (2,-2) circle (1.5pt);
\draw [fill=black] (3,1) circle (1.5pt);
\draw [fill=black] (0,3) circle (1.5pt);
\draw [fill=black] (-2,2) circle (1.5pt);
\draw [fill=black] (-3,-1) circle (1.5pt);
\draw [fill=black] (0,-3) circle (1.5pt);
\draw [fill=black] (-2.3333347838492724,0.9999956484521824) circle (1.5pt);
\draw (-2.3333347838492724-0.3,0.9999956484521824) node {$u$};
\draw [fill=black] (1.4907692307692308,2.0061538461538464) circle (1.5pt);
\draw (1.4907692307692308+0.2,2.0061538461538464+0.2) node {$v$};
\draw [fill=black] (2.3333347838492724,-0.9999956484521824) circle (1.5pt);
\draw (2.3333347838492724+0.3,-0.9999956484521824) node {$u'$};
\draw [fill=black] (-1.4907692307692308,-2.0061538461538464) circle (1.5pt);
\draw (-1.4907692307692308-0.22,-2.0061538461538464-0.22) node {$v'$};
\draw [fill=black] (-0.4212827765400208,1.5030747473030144) circle (1.5pt);
\draw (-0.2984065607936692+0.3,1.8060136908299036+0.8) node {$R$};
\draw (-0.2984065607936692-0.1,1.8060136908299036) node {$w$};
\draw [fill=black] (-1.9120520073092515,-0.503079098850832) circle (1.5pt);
\draw [fill=black] (1.9120520073092515,0.503079098850832) circle (1.5pt);
\draw [fill=black] (-1.5787172234599791,0.49692525269698573) circle (1.5pt);
\draw [fill=black] (0.42128277654002094,1.4969252526969856) circle (1.5pt);
\end{scriptsize}
\end{tikzpicture}
  \captionof{figure}{Illustration of the proof of Lemma~\ref{lem:scan}. The translate $R_1-w$ of the shaded polygon $R_1$ is dashed.}
  \label{fig:scan}
\end{minipage}
\end{figure}

\begin{proof}[Proof of Lemma~\ref{lem:big square}]
Consider $\Lambda_{l}$ as provided by Lemma~\ref{lem:good2} and tessellate it into axis-parallel squares of side length $C_2s$ (ignoring divisibility issues as usual).
Note that these squares are seen as subsets of $\mathbb Z^2$.
Suppose for a contradiction that no square is contained in $[A]$. By \eqref{eq:bad:density}, there exists a square $R\subset \bbZ^2$ of the tessellation such that $[A]\cap R\neq \varnothing$ and 
\[\frac{|\{v\in[A]\cap R:v\text{ is good}\}|}{|[A]\cap R|}\ge 1-\frac{10}{C_1^{1/3}}.\]
Fix a good vertex $u\in [A]\cap R$ (which does not belong to $A$ by the assumption that $\Lambda_{C_1s}\cap A=\varnothing$) and a vertex $v\in R\setminus[A]$ at minimal distance from $u$.
By Lemma~\ref{lem:good1}, we have $d(u,v)>s/50$ and all $w\in R$ such that $d(v,w)\le s/50$ and $d(u,w)<d(u,v)$ are bad vertices in $[A]$. 
Moreover, it is not hard to check that there is a universal constant $c>0$ such that there are at least $cs^2$ such vertices (e.g.\ those in an appropriately chosen semi-circle of radius $s/200$ centred at the point on the segment $uv$ at distance $s/100$ from $v$, see Figure~ \ref{fig:good2}).
By choosing $C_1 = C_1(c, C_2)$ so that $(2C_2s)^2\cdot 10/C_1^{1/3} < c s^2$, this leads to a contradiction with the choice of $R$. 
Thus, there exists a rectangle in the tessellation contained in $[A]$, which readily implies the statement of the lemma.
\end{proof}

\subsection{Fat convex sets}
\label{subsec:fat}

In Sections~\ref{subsec:fat} and~\ref{subsec:step} we will need to fix several positive constants 
\begin{equation}\label{eq:const}
1\ll C_6\ll C_5\ll C_4\ll C_3 \ll C_2\ll C_1
\end{equation}
where we write $a\ll b$ to denote that, having chosen $a$ large enough, $b$ is chosen suitably large with respect to some function of $a$ (and the constants on the left of $a$ in \eqref{eq:const}, if any). The size $s$ of the neighbourhood $\cK$ is also fixed, but it is assumed large enough with respect to (some functions of) the constants in~\eqref{eq:const}.

To prove Proposition~\ref{prop:long:rectangle}, we will expand the fully infected $C_2s\times C_2s$ square provided by Lemma~\ref{lem:big square} into a much larger infected convex set.
As preparation, let us fix some notation and terminology related to convex sets.

\begin{definition}
\label{def:polygon} 
We denote the Euclidean area of a convex set $P\subset\bbR^2$ by $|P|$. By abuse, we also denote by $|S|$ the Euclidean length of a (straight-line) segment $S$. 
Define $\sigma_1$ and $\sigma_2$ to be the projections on the $x$-axis and on the $y$-axis, respectively.
A \emph{chord} of $P$ is a non-empty (straight-line) segment with two endpoints on the boundary of $P$. A convex set is \emph{fat} if all vertical and horizontal chords have length at least $C_5s$. We set $x_-=\min\sigma_1(P)$, $x_+=\max\sigma_1(P)$, $y_-=\min\sigma_2(P)$ and $y_+=\max\sigma_2(P)$. We say that $P$ is \emph{horizontal} if $x_+-x_-\ge y_+-y_-$, and \emph{vertical} otherwise. For every convex set $P\subset\bbR^2$ and $x\in [x_-,x_+]$, we denote the vertical chord of $P$ lying on $\{x\}\times \bbR$ by $P_{1,x}$ and, similarly, for every $y\in [y_-, y_+]$, we denote the horizontal chord of $P$ lying on $\bbR\times \{y\}$ by $P_{2,y}$.
\end{definition}

\begin{remark}
\label{rem:fatness}
Note that a convex set is fat if and only if it has two vertical and two horizontal sides of length at least $C_5s$. We refer to these sides as \emph{top, bottom, left and right sides}.
\end{remark}

We will also use the following a simple corollary of the definition.
\begin{claim}\label{cl:square}
Let $P\subset\bbR^2$ be a fat convex set. Then, for every $x\in [x_-, x_+-C_5s]$, there is an axis-parallel square $S_x\subset P$ of side length $C_5s$ such that $\sigma_1(S_x) = [x,x+C_5s]$. Similarly, for every $y\in [y_-+C_5s, y_+]$, there is an axis-parallel square $S_y'\subset P$ of side length $C_5s$ such that $\sigma_2(S_y') = [y-C_5s,y]$.
\end{claim}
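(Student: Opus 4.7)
The claim reduces to showing the chord-overlap inequality
\[
\min\bigl(g(x),g(x+u)\bigr)\;-\;\max\bigl(f(x),f(x+u)\bigr)\;\ge\;u,
\]
where $u:=C_5s$ and $f,g\colon[x_-,x_+]\to\bbR$ are the convex (resp.\ concave) lower and upper envelopes of $P$, so that $P=\{(x',y'):f(x')\le y'\le g(x')\}$. Indeed, the axis-aligned square $[x,x+u]\times[a,a+u]$ lies in $P$ iff $[a,a+u]\subset[f(x),g(x)]\cap[f(x+u),g(x+u)]$, and concavity of $g$ and convexity of $f$ place the min of $g$ and the max of $f$ on $[x,x+u]$ at the endpoints of the strip.

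I would then proceed by case analysis driven by Remark~\ref{rem:fatness}, which supplies axis-parallel top, bottom, left and right sides of $P$ of length at least $u$, which I denote by $[b_1,b_2]\times\{y_-\}$, $[t_1,t_2]\times\{y_+\}$, $\{x_-\}\times[l_1,l_2]$ and $\{x_+\}\times[r_1,r_2]$. In every regime where $\min g$ and $\max f$ on $[x,x+u]$ are attained at the same endpoint---in particular, when the strip meets the bottom or the top side, or when $f$ and $g$ have opposite monotonicities on it---the inequality reduces directly to the fatness bound $g-f\ge u$ at that endpoint. The only delicate case is when both $f$ and $g$ are strictly non-decreasing on $[x,x+u]$ (or, symmetrically, both strictly non-increasing), in which case $\max f=f(x+u)$ and $\min g=g(x)$, and one must prove the quantitative bound $g(x)-f(x+u)\ge u$.

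In this hard case my plan is two-step. First, since $g$ is strictly increasing at $x$, the point $(x,g(x))$ is the leftmost point of $P$ at height $g(x)$; fatness of the horizontal chord at that height then yields $(x+u,g(x))\in P$, whence $f(x+u)\le g(x)$. Second, to upgrade this qualitative statement to the quantitative $g(x)-f(x+u)\ge u$, I would invoke concavity of $g$ on $[x_-,t_1]$ and convexity of $f$ on $[b_2,x_+]$ to lower-bound $g(x)$ by the chord joining $(x_-,l_2)$ and $(t_1,y_+)$ and to upper-bound $f(x+u)$ by the chord joining $(b_2,y_-)$ and $(x_+,r_1)$; this reduces the claim to its linear-slant avatar. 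In the latter I close the gap by combining the fatness bounds on the chords at $t_1$ and at $x_+$---which constrain the slope of the lower-right slant---with the structural side-length inequalities $b_2-x_-\ge u$ and $t_2-t_1\ge u$ (immediate from bottom and top side lengths) together with the residual slack $l_2-y_-\ge u$.

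The main obstacle is this quantitative last step: the chord-length conditions at $x$ and $x+u$ alone yield only the qualitative inequality $f(x+u)\le g(x)$, so one must genuinely invoke that the bottom side overhangs the left edge by at least $u$ (and, symmetrically, that the top side overhangs the right edge by at least $u$) in order to produce the extra $u$ of room needed for a full square. The second assertion of the claim, concerning horizontal strips with $\sigma_2(S_y')=[y-C_5s,y]$, then follows by applying the same argument to $P$ rotated by $\pi/2$.
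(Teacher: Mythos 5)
Your route is sound, but it is genuinely different from (and much longer than) the paper's. The paper proves the claim in two lines by interpolation: by fatness, $P$ contains an axis-parallel square $S_{x_-}$ of side $C_5s$ whose left side lies in the chord $P_{1,x_-}$, and a square $S_{x_+-C_5s}$ whose right side lies in $P_{1,x_+}$; writing $x=\lambda x_-+(1-\lambda)(x_+-C_5s)$, the Minkowski combination $S_x=\lambda S_{x_-}+(1-\lambda)S_{x_+-C_5s}$ is again an axis-parallel square of side $C_5s$ with $\sigma_1(S_x)=[x,x+C_5s]$, and it lies in $P$ by convexity; the vertical case is symmetric. Your two-secant plan does close, and with exactly the ingredients you list: setting $u=C_5s$, $L_g$ the secant of $g$ over $[x_-,t_1]$ and $L_f$ the secant of $f$ over $[b_2,x_+]$, the function $h(x)=L_g(x)-L_f(x+u)$ is affine in $x$, and at the two endpoints of the relevant range one has $h(b_2-u)\ge l_2-y_-\ge u$ (using $b_2-u\ge x_-$, $\alpha\ge0$ and the left side length) and $h(t_1)=y_+-L_f(t_1+u)\ge y_+-r_1\ge u$ (using $t_1+u\le t_2\le x_+$, monotonicity of $L_f$ and the right side length), so $h\ge u$ throughout. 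What each approach buys: the paper's argument needs no case analysis and no envelope bookkeeping; yours is more hands-on and makes explicit how each of the four sides enters, at the cost of several cases and a quantitative estimate.

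One bookkeeping slip should be fixed. Your parenthetical assertion that the strip meeting the top or bottom side forces the easy case is false: for $x<t_1<x+u\le t_2$ with $f(x)<f(x+u)$ one still has $\min g=g(x)$ and $\max f=f(x+u)$ at opposite endpoints. The hard case should be delimited not by monotonicity of $f,g$ on all of $[x,x+u]$ but by the endpoint comparisons $f(x+u)>f(x)$ and $g(x+u)>g(x)$ (otherwise one is in an easy case). These comparisons are also exactly what your argument needs: by convexity of $f$ and concavity of $g$ they force $x<t_1$ and $x+u>b_2$, so both secant bounds apply, and your hard-case computation then covers these overlooked configurations verbatim; together with the reflected case $f(x)>f(x+u)$, $g(x)>g(x+u)$, this completes the proof.
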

\begin{proof}[Proof of Claim~\ref{cl:square}]
We prove the first statement; the proof of the second one is similar.
Define $S_{x_-}$ (resp.\ $S_{x_+-C_5s}$) to be any axis-parallel square whose left side (resp.\ right side) is contained in $P_{x_-}$ (resp.\ in $P_{x_+}$); each of these squares is contained in $P$ by fatness of $P$.
Fix $x\in [x_-, x_+-C_5s]$ and write it in the form $\lambda x_- + (1-\lambda)(x_+-C_5s)$ for some $\lambda\in [0,1]$.
Then, the square $S_x := \lambda S_{x_-}+(1-\lambda) S_{x_+-C_5s}$ is contained in $P$ by convexity and satisfies the claim.
\end{proof}

The main technical result we aim to prove in Sections~\ref{subsec:fat} and~\ref{subsec:step} is the following.

\begin{restatable}{prop}{propstep}\label{prop:step}
Fix $d\ge 2s$, a set $A\subset\bbZ^2\setminus\Lambda_d$ and suppose that a fat convex set $P\subset\bbR^2$ satisfies $P\cap\bbZ^2\subset [A]\cap \Lambda_{d-2s}$ and $|P|\ge(C_2 s)^2$. Then, there exists a fat convex set $P'$ such that $P'\cap\bbZ^2\subset [A]\cap \Lambda_d$ and $|P'|\ge |P|+
s^2$.
\end{restatable}

In this section, we prepare for the proof of Proposition~\ref{prop:step} by studying a well-chosen strip of our convex set.
We first require a property of convex sets which will be applied to the neighbourhood $\cK$.

\begin{lemma}\label{lem:scan}
Fix a convex set $R$ satisfying $R=-R$. Suppose that a chord $uv$ divides $R$ into two convex sets $R_1$ and $R_2$, where $R_2$ contains the origin. 
Then, there is a point $w$ on the segment $uv$ such that $w+R$ contains $R_1$.
\end{lemma}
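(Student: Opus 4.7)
The plan is to show that the midpoint $w := (u+v)/2$ of the chord is always a valid choice. Set up orthonormal coordinates $(n, t)$ with $n$ the unit normal to the chord pointing into $R_1$, and write $u = (c, a)$, $v = (c, b)$ with (without loss of generality) $a < b$. The hypothesis that $R_2$ contains the origin translates to $c \ge 0$, so the midpoint is $w = (c, (a+b)/2)$. For $s \in [-M, M]$ with $M := \sup_{x \in R}\langle x, n\rangle$, let $[r_-(s), r_+(s)]$ denote the horizontal slice $\{r : (s, r) \in R\}$. Central symmetry of $R$ gives $r_+(-s) = -r_-(s)$, convexity of $R$ makes $r_+$ concave and $r_-$ convex, and since $u, v \in \partial R$ are the chord endpoints, $a = r_-(c)$ and $b = r_+(c)$.

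Slicing $R_1 \subset w + R$ perpendicular to $n$ reduces the desired inclusion to the pair of one-dimensional inequalities
\[
r_+(s) - r_+(s-c) \le (a+b)/2 \quad \text{and} \quad r_-(s) - r_-(s-c) \ge (a+b)/2
\]
for every $s \in [c, M]$. The function $F(t) := r_+(c+t) - r_+(t)$ is non-increasing in $t$ because $r_+$ is concave (so $r_+'$ is non-increasing on the interior of its domain, forcing $F' \le 0$). The first inequality asks $F(t) \le (a+b)/2$ for $t \in [0, M-c]$, and monotonicity reduces this to the endpoint $t = 0$, where $F(0) = b - r_+(0)$. Using $r_-(\cdot) = -r_+(-\cdot)$, the second inequality rewrites as $F(t') \ge (a+b)/2$ for $t' \in [-M, -c]$, and monotonicity reduces it to $t' = -c$, where $F(-c) = r_+(0) + a$. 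Both conditions therefore collapse into the single estimate $r_+(0) \ge (b-a)/2$.

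To prove this estimate, note that $u = (c, a)$ and $-v = (-c, -b)$ both lie in $R$ (the latter by $R = -R$), so convexity places their midpoint $(0, (a-b)/2)$ in $R$, giving $r_-(0) \le (a-b)/2$; combined with $r_+(0) = -r_-(0)$, this yields $r_+(0) \ge (b-a)/2$. The only genuinely non-obvious step is guessing that $w$ should be the midpoint of the chord; once this candidate is identified, slicing perpendicular to the chord, the monotonicity of $F$ coming from concavity of $r_+$, and a single convex combination of $u$ and $-v$ finish everything.
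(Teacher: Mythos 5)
Your proof is correct, and it settles on the same witness as the paper, $w=(u+v)/2$, using the same two basic ingredients (central symmetry, which puts $-u$ and $-v$ in $R$, and convexity); however, the verification is genuinely different in style. The paper argues synthetically: it notes that $-u,-v\in\partial R$, that the parallelogram with vertices $u,v,-u,-v$ sits in $R_2$, and that the two parallel chords $[u,-v]$ and $[v,-u]$ lie on lines avoiding the interior of $R_1$, from which the inclusion $R_1\subset w+R$ is read off. You instead slice perpendicular to the chord, reduce the inclusion to two one-dimensional inequalities for the boundary functions $r_\pm$, exploit that increments of a concave (resp.\ convex) function over a fixed shift $c\ge 0$ are monotone to push both inequalities to an endpoint, and close with the single estimate $r_+(0)\ge (b-a)/2$, obtained because the midpoint of $u$ and $-v$ lies in $R$ --- the same parallelogram fact the paper uses. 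Your route is more computational but fully self-contained: it makes explicit the final step the paper leaves terse (why the chord/strip picture forces $R_1\subset w+R$) and it absorbs the degenerate case where the chord passes through the origin ($c=0$, hence $m=0$) without a separate argument, at the cost of coordinate bookkeeping; the paper's version is shorter and coordinate-free. Two cosmetic remarks: the monotonicity of $t\mapsto r_+(t+c)-r_+(t)$ holds for any concave function without invoking derivatives, and the identification $a=r_-(c)$, $b=r_+(c)$ rests on the chord being the full intersection of $R$ with its line, which is exactly what ``divides $R$ into two convex sets'' means and how the lemma is applied in the paper.
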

\begin{proof}
If $uv$ contains the origin, our claim is trivial. Otherwise, let $u' = -u$ and $v' = -v$ so that $v'u'vu$ is a parallelogram contained in $R_2$, see Figure~\ref{fig:scan}.
On the one hand, by symmetry of $R$, each of $u'$ and $v'$ lies on $\partial R$. 
On the other hand, none of the parallel lines containing the chords $uv'$ and $vu'$ meets the interior of $R_1$.
As a result, $R_1\subset w+R$ for $w = (u+v)/2$, as desired.
\end{proof}

We approach Proposition~\ref{prop:step} by finding a small fully infected ball near the (endpoints of) a suitably chosen vertical chord of a horizontal convex set. Namely, we require the boundary around the chord to be rather flat and not too steep.

\begin{definition}[Admissible chord]
\label{def:admissible}
Fix a horizontal convex set $P$. For $x\in (x_-,x_+)$, let $\alpha^+_x$ (resp.\ $\alpha^-_x$) denote the angle between the $x$ axis and the tangent to the upper (resp.\ lower) boundary of $P$ at $P_{1,x}$. If the tangent is not uniquely defined, we make the convention that any condition stated holds for all possible tangents.

Given $x\in(x_-+C_4s,x_+-C_4s)$, the vertical chord $P_{1,x}$ is \emph{admissible} if
\begin{align*}
\alpha^+_x&{}\in [-4\pi/9,4\pi/9],& \alpha^+_{x-C_4s}-\alpha^+_{x+C_4s}&{}\le 1/C_3,& \alpha^-_x&{}\in[-4\pi/9,4\pi/9], & \alpha^-_{x+C_4s}-\alpha^-_{x-C_4s}&{}\le 1/C_3.
\end{align*}
\end{definition}

\begin{lemma}[Admissible chords exist]\label{lem:adm_strips}
Fix a fat horizontal convex set $P$ of area at least $(C_2 s)^2$. 
Then, there exists an admissible chord.
\end{lemma}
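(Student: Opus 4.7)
The plan is to produce an admissible chord via a measure-theoretic argument on $[x_-,x_+]$, bounding the measure of $x$ failing each of the four conditions in Definition~\ref{def:admissible} and checking that the bad sets do not cover $(x_-+C_4s,x_+-C_4s)$. First, since $P$ is horizontal and $|P|\ge(C_2s)^2$, we have $|P|\le(x_+-x_-)(y_+-y_-)\le(x_+-x_-)^2$, so $x_+-x_-\ge C_2s$. Write the upper and lower boundaries of $P$ as graphs of a concave function $f^+$ and a convex function $f^-$ on $[x_-,x_+]$; then $\tan\alpha^+_x=(f^+)'(x)$ is non-increasing and $\tan\alpha^-_x=(f^-)'(x)$ is non-decreasing (a.e.), both valued in $(-\pi/2,\pi/2)$.

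For the angle constraint, on any interval where $\alpha^+_x>4\pi/9$ the slope of $f^+$ exceeds $\tan(4\pi/9)>5$; since $f^+\in[y_-,y_+]$ and $y_+-y_-\le x_+-x_-$ by horizontality, such an interval has length at most $(x_+-x_-)/5$. The symmetric bound holds on the region where $\alpha^+_x<-4\pi/9$, so by monotonicity of $\alpha^+$, the set $I^+:=\{x:\alpha^+_x\in[-4\pi/9,4\pi/9]\}$ is an interval with $|I^+|\ge 3(x_+-x_-)/5$, and the same lower bound applies to $I^-$, giving $|I^+\cap I^-|\ge(x_+-x_-)/5$. For the variation conditions, monotonicity of $\alpha^+$ and the bound $|\alpha^+|\le\pi/2$ yield
\[
\int_{x_-+C_4s}^{x_+-C_4s}\bigl(\alpha^+_{x-C_4s}-\alpha^+_{x+C_4s}\bigr)\,dx
=\int_{x_-}^{x_-+2C_4s}\alpha^+_u\,du-\int_{x_+-2C_4s}^{x_+}\alpha^+_u\,du\le 2\pi C_4s
\]
after a change of variables and cancellation of the central interval. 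Markov's inequality then bounds the measure of $\{x:\alpha^+_{x-C_4s}-\alpha^+_{x+C_4s}>1/C_3\}$ by $2\pi C_3C_4s$, and analogously for $\alpha^-$.

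Combining, the set of admissible $x$ in $(x_-+C_4s,x_+-C_4s)\cap I^+\cap I^-$ lying outside the two bad-variation sets has measure at least $(x_+-x_-)/5-2C_4s-4\pi C_3C_4s$. Since the hierarchy~\eqref{eq:const} gives $C_2\gg C_3C_4$ while $x_+-x_-\ge C_2s$, this is strictly positive, and any $x$ in the resulting set that is not one of the (at most countably many) corners of $\partial P$ furnishes an admissible chord. The main obstacle is just bookkeeping: the chosen threshold $4\pi/9$ leaves a slack of $1-2/\tan(4\pi/9)>1/5$, which must absorb the buffer $2C_4s$ and the Markov correction $4\pi C_3C_4s$; the hierarchy $C_2\gg C_3\gg C_4$ is precisely calibrated to make this work, and the convention in Definition~\ref{def:admissible} at non-smooth boundary points (a measure-zero issue) is automatically compatible with the argument.
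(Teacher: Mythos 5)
Your proposal is correct and follows essentially the same route as the paper: bound the length of the set of $x$ violating the angle conditions via the slope bound $\tan(4\pi/9)$ and the height $y_+-y_-\le x_+-x_-$, bound the measure of the large-variation sets (your Markov-inequality computation gives $2\pi C_3C_4s$ per boundary versus the paper's $4\pi C_3C_4s$, same spirit), and conclude positivity of the remaining measure from $x_+-x_-\ge C_2s$ (up to a constant) and the hierarchy $C_2\gg C_3,C_4$. The minor variations (area-of-bounding-box lower bound on $x_+-x_-$, explicit handling of the tangent convention at corners) are harmless.
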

\begin{proof}
Since $P$ is fat and horizontal, we have $x_+-x_-\ge \diam(P)/\sqrt{2}\ge C_2s/4$. The interval of $x$ such that $\alpha^+_x<-4\pi/9$ has length at most $(y_+-y_-)/\tan(4\pi/9)\le (x_+-x_-)/\tan(4\pi/9)$, and similar bounds hold for $\alpha^+_x>4\pi/9$ and for $\alpha^-_x$. The set of $x$ such that $\alpha^+_{x-C_4s}-\alpha^+_{x+C_4s}>1/C_3$ has measure at most $4\pi C_3C_4s$, and similar bound holds for $\alpha^-$.
As a result, the set of $x$ such that $P_{1,x}$ is admissible has measure at least
\[(x_+-x_-)(1-4/\tan(4\pi/9))-4\cdot4\pi C_3C_4s>0,\]
where we used that $x_+-x_-\ge C_2s/4$, $\tan(4\pi/9)>4$ and $C_2\gg C_3,C_4$.
\end{proof}

Recall that, for $x\in \mathbb R^2$ and $\rho\ge 0$, we denote by $B(x,\rho)$ the Euclidean ball with centre $x$ and radius~$\rho$.

\begin{lemma}\label{lem:boundaries}
Fix a fat horizontal convex set $P$, an admissible chord $P_{1,x}$ and a point $u$ at distance at most $s$ from $P_{1,x}$.
Then, $B(u, 2s)$ intersects at most one of the upper and the lower boundaries of $P$.
\end{lemma}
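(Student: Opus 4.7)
The plan is a proof by contradiction: suppose that $B(u,2s)$ contains a point $q^+$ on the upper boundary of $P$ and a point $q^-$ on the lower boundary. Write $u=(x_u,y_u)$ and $q^\pm=(a^\pm,y^\pm(a^\pm))$, where $y^\pm$ denote the (concave, respectively convex) upper and lower boundary functions of $P$. The condition $d(u,P_{1,x})\le s$ forces $|x_u-x|\le s$, so the horizontal projection of $B(u,2s)$ is contained in $[x-3s,x+3s]$, and in particular $a^\pm\in[x-3s,x+3s]$. Since the diameter of $B(u,2s)$ is $4s$, we obtain $y^+(a^+)-y^-(a^-)\le 4s$, and the aim is to contradict this.

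Fatness immediately gives $y^+(x)-y^-(x)=|P_{1,x}|\ge C_5 s$, so it remains to control how much each boundary function can drift between $x$ and any $x'\in[x-3s,x+3s]$. For this I would leverage admissibility. Since $y^+$ is concave and $y^-$ is convex, $\alpha^+_{\cdot}$ is non-increasing and $\alpha^-_{\cdot}$ is non-decreasing, so the admissibility bounds $\alpha^+_{x-C_4 s}-\alpha^+_{x+C_4 s}\le 1/C_3$ and $\alpha^-_{x+C_4 s}-\alpha^-_{x-C_4 s}\le 1/C_3$ propagate to $|\alpha^\pm_{x'}-\alpha^\pm_x|\le 1/C_3$ for every $x'\in[x-C_4 s,x+C_4 s]$. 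Combined with $|\alpha^\pm_x|\le 4\pi/9$, this yields an absolute upper bound $T:=\tan(4\pi/9+1/C_3)$ on $|\tan\alpha^\pm_{x'}|$ over this range, provided $C_3$ is large enough that $4\pi/9+1/C_3<\pi/2$. Integrating $(y^\pm)'(t)=\tan\alpha^\pm_t$ then gives $|y^\pm(x')-y^\pm(x)|\le 3Ts$ for every $x'\in[x-3s,x+3s]\subset[x-C_4 s,x+C_4 s]$. Substituting:
\[y^+(a^+)-y^-(a^-)\ge(y^+(x)-y^-(x))-|y^+(a^+)-y^+(x)|-|y^-(a^-)-y^-(x)|\ge(C_5-6T)s>4s,\]
since $C_5\gg T$ by the hierarchy~\eqref{eq:const}, giving the desired contradiction.

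The only real subtlety I expect is the conversion of the admissibility condition, which is stated as a bound on \emph{angles} at just two points $x\pm C_4 s$, into a uniform Lipschitz estimate for $y^\pm$ on $[x-3s,x+3s]$. This is precisely why the angle cutoff is $4\pi/9$ rather than $\pi/2$: the residual $1/C_3$ slack still leaves the slope bounded by the absolute constant $T$, after which the separation of scales $C_5\gg C_4\gg C_3$ built into~\eqref{eq:const} ensures that the small vertical drift $6Ts$ of the boundaries across a $3s$-window is dwarfed by the fatness margin $C_5 s$.
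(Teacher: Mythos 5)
Your proof is correct and follows essentially the same route as the paper: argue by contradiction, use the monotonicity of $\alpha^\pm$ together with admissibility to bound the boundary slopes near $x$ by $\tan(4\pi/9+1/C_3)$, and play this off against the fatness lower bound $C_5s$ on a nearby vertical chord. The only cosmetic difference is that you integrate the slope bound to control the vertical drift of the boundary graphs, whereas the paper intersects tangent lines at the two contact points with the vertical line through $u$; the underlying estimate is the same.
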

\begin{proof}
Suppose for contradiction that point $y$ on the upper boundary of $P$ and point $z$ on the lower boundary of $P$ are both at distance at most $2s$ from $u$. 
Also, fix arbitrary tangents to $P$ at points $y$ and $z$, and denote by $y'$ and $z'$ their intersection points with the vertical line containing $P_{1,\sigma_1(u)}$. Then, by admissibility of $P_{1,x}$, we have $\alpha^+_{\sigma_1(y)},\alpha^-_{\sigma_1(z)}\in[-4\pi/9-1/C_3,4\pi/9+1/C_3]$ and
\[|y'z'|\le |y'y|+|yu|+|uz|+|zz'|\le \frac{2s}{\cos(4\pi/9+1/C_3)}+2s+2s+\frac{2s}{\cos(4\pi/9+1/C_3)}.\]
By choosing $C_3$ and $C_5$ appropriately large, the latter bound is at most $C_5s/2$, contradicting the fatness of $P$, as desired.
\end{proof}

The following lemma shows that $[A]\setminus P$ contains a small bump touching $P$ next to every admissible chord. For two sets $A,B$ intersecting the boundary of $P$, the \emph{distance between $A$ and $B$ along the boundary of $P$} is the length of the shortest piecewise-linear curve contained in the boundary of $P$ with one endpoint in $A$ and the other endpoint in $B$.

\begin{lemma}\label{lem:blob}
Fix $d\ge 2s$ and a set $A\subset \bbZ^2$ such that $A\cap\Lambda_d=\varnothing$. Let $P\subset\Lambda_{d-2s}$ be a fat horizontal convex set with $P\cap \bbZ^2\subset[A]$, and let $P_{1,x}$ be an admissible chord. Then, there exists a point $z$ on the boundary of the convex set $P$ such that the distance between $z$ and $P_{1,x}$ along the boundary of $P$ is at most $3s$ and $B(z,s/C_6)\cap\bbZ^2\subset[A]$.
\end{lemma}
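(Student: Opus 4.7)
The strategy is to use a first-infection analysis in a thin vertical strip around $P_{1,x}$ to localize a bump on $\partial P$ nearby, then exploit admissibility to upgrade this single bump to a fully infected ball.

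Set $T = \{(x',y)\in\bbR^2: |x'-x|\leq s\}$ and let $z^*$ be the first site of $T\cap P\cap\bbZ^2$ to be infected by the bootstrap dynamics from $A$, at some time $t_0\geq 1$; this exists since $T\cap P\cap\bbZ^2$ is a non-empty subset of $P\cap\bbZ^2\subset[A]$, while $A\cap(T\cap P)\subset A\cap\Lambda_d=\varnothing$. The $\pi/2$-rotation invariance of $K$ makes the horizontal direction $(1,0)$ stable, so $|\cK\cap\{k_1>0\}|=r-1$. Writing $c = z^*_x-x\in[-s,s]$, the inclusion $\cK\subset B(0,s)$ gives $(z^*+\cK)\cap T^c\subset\{z^*+k: |k_1|>s-|c|\}$, of cardinality at most $r-1<r$. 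Hence the set of infected neighbours $I_0:=(z^*+\cK)\cap A_{t_0-1}$, satisfying $|I_0|\geq r$, must have $I_0\cap T\neq\varnothing$; by minimality of $z^*$, this intersection lies in $T\cap P^c$, exhibiting at least one bump inside $B(z^*,s)$.

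Applying Lemma~\ref{lem:boundaries} (valid since $z^*$ is within $s$ of the admissible chord $P_{1,x}$), $B(z^*,2s)$ meets at most one of the upper and lower boundaries of $P$, and the bump obtained above forces this boundary to exist -- WLOG the upper one. Let $z\in\partial P$ be the closest upper-boundary point to $z^*$. Then $|z-z^*|\leq s$: otherwise $z^*+\cK\subset P$ so $I_0\subset(z^*+\cK)\cap T^c\cap P$ has cardinality at most $r-1$, contradicting $|I_0|\geq r$. The slope bound $|\alpha^\pm|\leq 4\pi/9$ from admissibility then converts this Euclidean closeness into the arclength bound $\mathrm{dist}_{\partial P}(z,P_{1,x})\leq 3s$.

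The hardest step is upgrading the single guaranteed bump to a fully infected ball $B(z,s/C_6)\cap\bbZ^2$. I would first sharpen the counting above to a positive-density bound $|I_0\cap P^c|\geq c_* r=\Theta(s^2)$ for some $K$-dependent $c_*>0$, by restricting $z^*$ to a narrower sub-strip (where the excluded set $(z^*+\cK)\cap T^c$ occupies a strictly smaller fraction of $\cK$) and using the cap structure imposed by Lemma~\ref{lem:boundaries}. The near-constancy of the boundary slope on scale $C_4 s$ in admissibility then implies that for every $w\in B(z,s/C_6)\cap\bbZ^2$, the symmetric difference $(z^*+\cK)\triangle(w+\cK)$ contains only $O(s\cdot s/C_6)=O(s^2/C_6)\ll c_* r$ sites, so $w$ inherits at least $r$ infected neighbours from $z^*$ and hence $w\in[A]$. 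Lemma~\ref{lem:scan} (applied with $R=sK$ to a chord along the tangent to $\partial P$ at $z$) is a natural tool for implementing this transport of neighbourhood counts between $z^*$ and $w$.

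The main obstacle is producing the density bound $|I_0\cap P^c|\geq c_* r$ uniformly in the horizontal position $c$ of $z^*$: the counting in the second paragraph degenerates as $|c|\to s$, so one must either exclude that regime (e.g.\ by showing that the first infection cannot occur at the lateral boundary of $T$ because sites there receive substantial support from $P\setminus T$ which leads to an earlier infection inside $T$) or iterate the argument on finer concentric strips until the first infection lands deep enough inside $T$ for the sharpened counting to apply.
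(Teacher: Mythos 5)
Your first half coincides with the paper's argument: the first site $z^*$ (the paper's $u$) of the strip to become infected must have an infected neighbour inside the strip but outside $P$, and Lemma~\ref{lem:boundaries} identifies which boundary is involved (your one-sided cardinality bound is the right reading, even though the set $\{|k_1|>s-|c|\}$ you wrote can have up to $2(r-1)$ points). The genuine gap is in the upgrade to a fully infected ball. Your transfer estimate is quantitatively false: $z$ is the boundary point nearest to $z^*$, so for $w\in B(z,s/C_6)$ one only has $|w-z^*|\le s+s/C_6$, and the symmetric difference of two translates of $\cK$ at distance $\rho$ has $\Theta(s\rho)$ lattice points; hence $|(z^*+\cK)\triangle(w+\cK)|=\Theta(s^2)$, the same order as $r$ and as any surplus $c_*r$, not $O(s^2/C_6)$. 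Moreover the sites gained, $(w+\cK)\setminus(z^*+\cK)$, lie largely outside $P$ where you control nothing. The paper avoids any approximate transfer: it takes the tangent line $J$ to $\partial P$ at the top of the chord, uses Lemma~\ref{lem:scan} to find $z'\in J$ with $z'+\cK$ containing the \emph{entire} cap of $z^*+\cK$ above $J$ (so all surplus infections are kept exactly), uses admissibility to show $z'+\cK$ also contains at least $r-1-\delta^5s^2$ points of $P$, and only then moves by $O(\delta s)$ to the points of $B(z,s/C_6)$, with the ball centred at a boundary point near $z'$ rather than near the projection of $z^*$. You name Lemma~\ref{lem:scan} as a possible tool, but in your plan it would have to replace, not implement, the symmetric-difference step.

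More seriously, the positive-density bound $|I_0\cap P^c|\ge c_*r$ that your transfer presupposes genuinely fails when $z^*$ lies within $O(\delta s)$ of the lateral side of the strip: there, up to $r-1$ infected neighbours may sit in $P$ beyond the strip (infected earlier without contradicting minimality), and only a single infected site outside $P$ is guaranteed. Your two patches do not close this: nothing prevents the first infection of $T\cap P$ from occurring at the lateral edge (infection can simply sweep in horizontally, and extra support from $P\setminus T$ does not force an earlier infection \emph{inside} $T$), and iterating over narrower concentric strips just reproduces the same edge regime for each new strip, with no guarantee of termination in a favourable position. This is exactly where the paper needs its Cases 2 and 3: in Case 2 it argues forward in time from the single bump $w$, using $\pi/2$-rotation invariance to show $w+\cK$ contains a quarter-square of $P$ of diameter $\delta s$ at $z^*$, so a small ball around $w$ is infected, and then passes to the boundary point on the segment $z^*w$ via the inclusion $w+\delta\cK\subset z+\cK$; in Case 3 it exploits the angle bound at the corner coming from admissibility. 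Without analogues of these arguments, the plan does not go through in the lateral-edge regime, which is the hardest part of the lemma.
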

\begin{proof}
Let $S=\bigcup_{x'\in[x-s,x+s]}P_{1,x'}$ and $\bar S=[x-s,x+s]\times\bbR$. 
Consider the first vertex $u\in S$ which becomes infected by the bootstrap percolation process with initial condition $A$ (breaking ties arbitrarily).
By Lemma~\ref{lem:boundaries}, only one of the upper and the lower boundaries of $P$ can be at distance at most $2s$ from $u$. However, since all vertices in $S\setminus \{u\}$ may only get infected after $u$, $u$ must have a neighbour outside $P$.
Thus, without loss of generality, we may assume that $u+\cK$ intersects only the top and/or left boundaries of $S$. We set $\delta=C_6^{-1/5}$ and consider three cases illustrated in Figure~\ref{fig:three pics 2}.

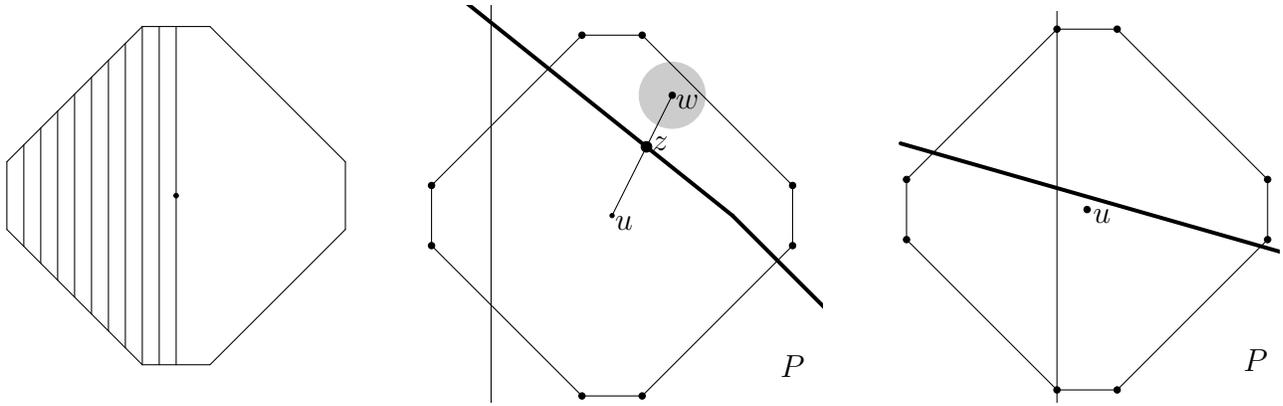
\begin{figure}
     \centering
     \begin{subfigure}[b]{0.22\textwidth}
         \centering
\begin{tikzpicture}[scale=0.3375,line cap=round,line join=round,x=1cm,y=1cm]
\clip(-16,-6.1) rectangle (-4.9,6.1);
\draw  (-11,5)-- (-9,5);
\draw  (-9,5)-- (-5,1);
\draw  (-5,1)-- (-5,-1);
\draw  (-5,-1)-- (-9,-5);
\draw  (-9,-5)-- (-11,-5);
\draw  (-11,-5)-- (-15,-1);
\draw  (-15,-1)-- (-15,1);
\draw  (-15,1)-- (-11,5);
\draw  (-10,5)-- (-10,-5);
\draw  (-11,5)-- (-11,-5);
\draw  (-12,4)-- (-12,-4);
\draw  (-13,3)-- (-13,-3);
\draw  (-14,2)-- (-14,-2);
\draw  (-10.5,5)-- (-10.5,-5);
\draw  (-11.5,4.5)-- (-11.5,-4.5);
\draw  (-12.5,3.5)-- (-12.5,-3.5);
\draw  (-13.5,2.5)-- (-13.5,-2.5);
\draw  (-14.5,1.5)-- (-14.5,-1.5);
\begin{scriptsize}
\draw [fill=black] (-10,0) circle (1.9pt);
\end{scriptsize}
\end{tikzpicture}
\caption{Case 1.}
\label{fig:segments}
     \end{subfigure}
     \begin{subfigure}[b]{0.26\textwidth}
         \centering
\begin{tikzpicture}[scale=0.4875,line cap=round,line join=round,x=1cm,y=1cm]
\clip(-7,-3.6) rectangle (2,4.4);

\draw[<->] (-5,-3.5) -- (-1,-3.5);
\draw (-4,-3.5) node[above]{$s$};
\draw[<->] (-5,0.5)--(-2.5,0.5) node[midway,below]{$\ge\delta s$};
\draw (-1,2.4)--(-1,-4);
\draw (-1,0.5) node[right]{$P_{1,x}$};

\fill[fill=black,fill opacity=0.1] (-4.95,2.0467146278752925) -- (-0.4926213732128431,2.4926213732128432) -- (-2,4) -- (-3,4) -- cycle;
\draw (-6,0)-- (-6,1);
\draw (-6,1)-- (-3,4);
\draw (-3,4)-- (-2,4);
\draw (-2,4)-- (1,1);
\draw (1,1)-- (1,0);
\draw (1,0)-- (-2,-3);
\draw (-2,-3)-- (-3,-3);
\draw (-3,-3)-- (-6,0);
\draw (-5,-3.813301972010373) -- (-5,4.47511585686439);
\draw [line width=1.5pt] (-7,1)-- (-3.1615159284348486,2.015938903462461);
\draw [line width=1.5pt] (-3.1615159284348486,2.015938903462461)-- (-1,2.4);
\draw [line width=1.5pt] (-1,2.4)-- (2,2.7);
\draw (-7,1.8)-- (1.978484071565152,2.75);
\draw  (-0.6926213732128431,2.6926213732128432)-- (-2,4);
\draw (-2,4)-- (-3,4);
\draw (-3,4)-- (-4.753285372124707,2.2467146278752925);
\draw [fill=black] (-2.67,2.270390802608277) circle (1.5pt);
\draw (-2.57,2.7) node {$z'$};
\draw [fill=black] (-2.648866188921178,2.11268408776757) circle (1.5pt);
\draw (-2.5412542513251077,1.85) node {$z$};
\draw [fill=black] (-6,0) circle (1.5pt);
\draw [fill=black] (-6,1) circle (1.5pt);
\draw (-6.7,0.6374123783176) node {$P$};
\draw [fill=black] (-3,4) circle (1.5pt);
\draw [fill=black] (-2,4) circle (1.5pt);
\draw [fill=black] (1,1) circle (1.5pt);
\draw [fill=black] (1,0) circle (1.5pt);
\draw [fill=black] (-2,-3) circle (1.5pt);
\draw [fill=black] (-3,-3) circle (1.5pt);
\draw[color=black] (-4.4,4) node {$\partial \bar S$};
\draw [fill=black] (-2.5,0.5) circle (1.5pt);
\draw (-2.14,0.5) node {$u$};
\draw (-6.5,2.5) node {$J$};
\draw (-2.515712131667558,3.61) node {$\mathcal K_1$};
\end{tikzpicture}
\caption{Case 1.}
\label{fig:revision}
     \end{subfigure}
     \hfill
     \begin{subfigure}[b]{0.24\textwidth}
         \centering
\begin{tikzpicture}[scale=0.6,line cap=round,line join=round,x=1cm,y=1cm]
\clip(-7.1,-3.1) rectangle (-0.5,3.5);
\draw [line width=1.5pt] (-7,4)-- (-18,9);
\draw [line width=1.5pt] (-7,4)-- (-2,0);
\draw [line width=1.5pt] (-2,0)-- (9,-11);
\draw  (-4.5,3)-- (-3.5,3);
\draw  (-3.5,3)-- (-1,0.5);
\draw  (-1,0.5)-- (-1,-0.5);
\draw  (-1,-0.5)-- (-3.5,-3);
\draw  (-3.5,-3)-- (-4.5,-3);
\draw  (-4.5,-3)-- (-7,-0.5);
\draw [line width=0.19pt,fill=black,opacity=0.2] (-3,2) circle (0.55cm);
\draw  (-4,0)-- (-3,2);
\draw  (-4.5,3)-- (-7,0.5);
\draw  (-7,0.5)-- (-7,-0.5);
\draw  (-3,0.8)-- (-3,-10);
\draw  (6,-3.323572931005381) -- (6,4.74692335565218);
\draw  (-6.010701625881464,-3.323572931005381) -- (-6.010701625881464,4.74692335565218);
\draw [fill=black] (-4,0) circle (1pt);
\draw (-3.8,-0.12036613445058617) node {$u$};
\draw [fill=black] (-4.5,3) circle (1.5pt);
\draw [fill=black] (-3.5,3) circle (1.5pt);
\draw [fill=black] (-1,0.5) circle (1.5pt);
\draw [fill=black] (-1,-0.5) circle (1.5pt);
\draw [fill=black] (-3.5,-3) circle (1.5pt);
\draw [fill=black] (-4.5,-3) circle (1.5pt);
\draw [fill=black] (-7,-0.5) circle (1.5pt);
\draw [fill=black] (-3,2) circle (1.5pt);
\draw (-2.75,1.9) node {$w$};
\draw [fill=black] (-3.4285714285714284,1.1428571428571428) circle (1.5pt);
\draw (-3.2,1.2) node {$z$};
\draw [fill=black] (-7,0.5) circle (1.5pt);
\draw (-1,-2.5) node {$P$};
\draw (-6,0) node[right]{$\partial\bar S$};
\draw (-3.2,-1) node[right]{$P_{1,x}$};
\end{tikzpicture}
\caption{Case 2.}
\label{Ivo8}
     \end{subfigure}
     \hfill
     \begin{subfigure}[b]{0.24\textwidth}
         \centering
\begin{tikzpicture}[scale=0.6,line cap=round,line join=round,x=1cm,y=1cm]
\clip(-7.3,-2.7) rectangle (-13.7,3.8);
\draw  (-11,-3.229663518878698) -- (-11,4.758590928903883);
\draw  (7,-3.229663518878698) -- (7,4.758590928903883);
\draw [line width=1.5pt] (-13.6,1.6)-- (-1,-2);
\draw  (-1,-2)-- (18.724931506849313,-9.396849315068492);
\draw  (-10,3.5)-- (-7.5,1);
\draw  (-7.5,1)-- (-7.5,0);
\draw  (-7.5,0)-- (-10,-2.5);
\draw  (-10,-2.5)-- (-11,-2.5);
\draw  (-11,-2.5)-- (-13.5,0);
\draw  (-13.5,0)-- (-13.5,1);
\draw  (-13.5,1)-- (-11,3.5);
\draw  (-11,3.5)-- (-10,3.5);

\draw  (-8,0)-- (-8,-10);

\draw [fill=black] (-10.5,0.5) circle (1.5pt);
\draw (-10.25,0.4) node {$u$};
\draw [fill=black] (-10,3.5) circle (1.5pt);
\draw [fill=black] (-7.5,1) circle (1.5pt);
\draw [fill=black] (-7.5,0) circle (1.5pt);
\draw [fill=black] (-10,-2.5) circle (1.5pt);
\draw [fill=black] (-11,-2.5) circle (1.5pt);
\draw [fill=black] (-13.5,0) circle (1.5pt);
\draw [fill=black] (-13.5,1) circle (1.5pt);
\draw [fill=black] (-11,3.5) circle (1.5pt);
\draw (-7.55,-2) node {$P$};
\draw (-11,0) node[left]{$\partial\bar S$};
\draw (-7.8,-0.3) node[left]{$P_{1,x}$};
\end{tikzpicture}
\caption{Case 3.}
\label{Ivo9}
     \end{subfigure}
        \caption{Figures accompanying the proof of Lemma~\ref{lem:blob}. The thick contour is the boundary of~$P$.}
        \label{fig:three pics 2}
\end{figure}

\vspace{1em}
\noindent
\textbf{Case 1.} Suppose that the vertex $u$ is at distance at least $\delta s$ from the left boundary of $\bar S$. 
Note that, by symmetry and convexity, the lengths of the chords $\cK_{x'}$ of $\cK$ are non-decreasing with respect to $x'$ for $x'\le 0$ (see Figure~\ref{fig:segments}).
As a result, the area of $(u+\cK)\cap \bar S$ is at least a $(1+\delta)/2$-proportion of the area of $u+\cK$ and, therefore, $\bar S\setminus S$ must contain at least $\delta r/3$ vertices in $u+\cK$ infected prior to $u$.

Fix a line $J$ tangent to $P$ at the point where its upper boundary meets $P_{1,x}$.
Then, by choosing $C_3$ appropriately large (recall Definition~\ref{def:admissible}), we can ensure that $J$ divides $u+\cK$ into two parts, $\cK_1$ and $\cK_2$, satisfying $\cK_2 \supset(u+\cK)\cap P$ and $|\cK_2\setminus P| \le \delta^5 s^2$.
We conclude that, on the one hand, $\cK_1$ must contain at least $\delta r/3 - \delta^5 s^2 \ge
\delta r/4$ vertices in $[A]$. 
On the other hand, by Lemma~\ref{lem:scan}, one can find a point $z'\in J$ such that $z'+\cK$ contains $\cK_1$. 
Again, by admissibility of $P_{1,x}$ and for appropriately large $C_3$, this means that at least $r-1-\delta^5 s^2+\delta r/4\ge r + \delta r/5$ vertices in $z'+\cK$ are in $[A]$.
Moreover, there is a point $z$ on the boundary of $P$ at distance at most $\delta^5 s$ from $z'$ (see Figure~\ref{fig:revision}). Note that, for every $y\in B(z,s/C_6)\cap\bbZ^2$, $B(z',1)$ and $y$ are connected by a path in $\mathbb Z^2$ of length at most $1+2s(\delta^5+1/C_6)\le s\delta/60$. Similarly to the argument from the proof of Lemma~\ref{lem:good1}, by combining~\eqref{eq:sizeK} and \eqref{eq:r:s:bounds}, we get that
\[|(z'+\cK)\setminus (y+\cK)|\le (2s+1)\cdot (s\delta/60) \le 
\delta r/10,\]
which implies that at least $r + \delta r/5 - \delta r/10 \ge r + 1$ vertices in $y+\cK$ belong to $[A]$, as desired.

\vspace{1em}
\noindent
\textbf{Case 2.} Suppose that the vertex $u$ is at distance at least $\delta s$ from the upper boundary of $P$. 
Fix a vertex $w$ in $(u+\cK)\cap (\bar S\setminus P)$ that was infected before $u$. Note that $w$ is located above the upper boundary of $P$.
Define the region $W = (w+\cK)\cap S$ and note that every vertex in $W$ gets infected after $w$; our next task will be to bound from below the number of vertices in this region.
Denote $u_1 = u$ and let $u_2$, $u_3$ and $u_4$ be obtained by rotation of $u$ around $w$ at angle $\pi/2$, $\pi$ and $3\pi/2$, respectively.
Since $\cK$ is convex and $\pi/2$-rotation invariant, all vertices in the square $R = u_1u_2u_3u_4$ belong to $w+\cK$.
Moreover, by our assumption on $u$, the square $R'\subset R$ with corner $u_1$ and diameter $\delta s$ is contained in $P$.
Using that $R'$ contains approximately as many points as its area, we obtain that this number is at least $(1/2+o(1))(\delta s)^2\ge \delta^2 s^2/3$ (for $s$ large enough).
Thus, $w+\cK$ contains at least $r+\delta^2s^2/3$ vertices in $[A]$ and, therefore, all points at distance at least $\delta^2s^2/(3\cdot 2\cdot (2s+1))\ge \delta^2 s/20$ from $w$ get infected eventually.

Now, consider the point $z$ obtained by intersecting the segment $uw$ with the upper boundary of $P$; that is, $z = (1-\lambda) u+\lambda w$ for some $\lambda\in [\delta,1]$, see Figure~\ref{Ivo8}.
Then, on the one hand, $z+\cK$ must contain $w+\delta \cK\subset w+\lambda \cK$, which itself contains the ball $B(w,\delta^2 s/20)$.
On the other hand, since $P_{1,x}$ is an admissible strip, $z+\cK$ must contain at least $r-\delta^5 s^2$ infected vertices in $P$ and, since at least half of the ball $B(w,\delta^2 s/20)$ is outside $P$, at least $(\pi/2+o(1)) (\delta^2 s/20)^2\ge \delta^4 s^2/400 > 2\delta^5 s^2$ infected vertices outside $P$.
Thus, all vertices at distance at most $\delta^4 s^2/(800\cdot 2\cdot (2s+1))\ge \delta^4 s/4000 >s/C_6$ from $z$ belong to $[A]$.

\vspace{1em}
\noindent
\textbf{Case 3.} Suppose that the vertex $u$ is at distance at most $\delta s$ both from the left boundary of $\bar S$ and from the upper boundary of $P$.
Then, by Definition~\ref{def:admissible}, the angle between the upper and the left boundaries of $S$ is at least $\pi/20$, so $u$ must be at distance at most $\delta s/\sin(\pi/40)\le 20\delta s$ from their intersection point.
At the same time, the bound on the slope of the upper boundary of $P$ implies that $B(u,s/\sqrt{2})\subset u+\cK$ contains at least $s^2/100$ vertices in $S$.
As a result, there is a point $z$ on the upper boundary of $P$ for which $z+\cK$ contains at least $r+s^2/100-2\delta s\cdot (2s+1) \ge r+s^2/200$ vertices in $[A]$. In turn, this implies that every point at distance at most $s^2/(200\cdot 2\cdot (2s+1))\ge s/1000$ from $z$ belongs to $[A]$, as desired.
\end{proof}

\subsection{Expansion step}
\label{subsec:step}
In this section, we will show how to use the small infected ball ensured by Lemma~\ref{lem:blob} to construct the convex set $P'$ required in Proposition~\ref{prop:step}. To begin with, we show that, for some large constant $C_5$, there is a rectangle with dimensions $s/C_5\times 3s$ in $[A]$ in the vicinity of the ball ensured by Lemma~\ref{lem:blob}. We note that the constant $C_5$ here will play an independent role from its occurrence in Definition~\ref{def:polygon}, but is reused for convenience.

\begin{lemma}\label{lem:rectangle}
Fix a fat convex set $P$ and an admissible chord $P_{1,x}$ as in Lemma~\ref{lem:blob}. There is a rectangle $\Pi$ with the following properties:
\begin{itemize}
    \item it has dimensions $s/C_5\times 3s$,
    \item $\Pi\cap \mathbb Z^2\subset [A]$,
    \item one of its longer sides, called $\Pi_-$, is entirely contained in $P$ while its other long side, called $\Pi_+$, is at distance at least $s/(2C_5)$ from $P$, and
    \item $\Pi_-$ is at distance at most $3s$ from $P_{1,x}$.
\end{itemize}
\end{lemma}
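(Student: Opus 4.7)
My plan is to place $\Pi$ along the tangent direction of $\partial P$ at $z$, straddling the boundary, and to propagate the infection from $P$ and the small ball $B=B(z,s/C_6)\cap\bbZ^2\subset[A]$ provided by Lemma~\ref{lem:blob}. I first rotate coordinates so that the tangent to $\partial P$ at $z$ is horizontal, $z$ is the origin, and $P$ lies locally below the tangent line $L$. Admissibility of $P_{1,x}$ (Definition~\ref{def:admissible}), combined with the fact that $z$ lies within distance $3s$ of $P_{1,x}$ along $\partial P$, guarantees that the slope of $\partial P$ varies by at most $1/C_3$ over a range comfortably exceeding $3s$ in the rotated frame. Hence the upper boundary of $P$ near $z$ is the graph of a concave function $g$ with $g(0)=g'(0)=0$ and $|g(x)|=O(s/C_3)$ for $|x|\le 3s$, which by the constant hierarchy $1\ll C_6\ll C_5\ll C_4\ll C_3$ is negligible compared to the rectangle's width $s/C_5$.

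I then take $\Pi$ to be a horizontal axis-aligned rectangle of dimensions $s/C_5\times 3s$, centred horizontally at $z$ and shifted vertically upward by a small $\eta=\Theta(s/C_3)$, so the bottom long side $\Pi_-$ sits just below $L$. The three geometric conditions of the lemma are then straightforward: $\Pi_-$ is contained in $P$ because $g(x)\ge -O(s/C_3)\ge -\eta$ throughout $|x|\le 3s/2$; $\Pi_+$ has vertical distance to $\partial P$ at least $s/(2C_5)$ within $|x|\le 3s/2$, and the boundary of the convex set $P$ recedes further outside this range by convexity; and the horizontal distance from $\Pi_-$ to $P_{1,x}$ is at most $3s$ by the choice of $z$.

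The bulk of the argument is verifying $\Pi\cap\bbZ^2\subset[A]$. Lattice points of $\Pi$ contained in $P\cup B$ already lie in $[A]$, covering both the portion of $\Pi$ below $\partial P$ and the full height of $\Pi$ over the central range $|x|=O(s/C_6)$ (using $s/C_6\gg s/C_5$). For the remaining lattice points in $\Pi^+:=\Pi\cap\{y>g(x)\}\setminus B$, I propose to infect them by induction on the horizontal distance to $z$: given a candidate $p=(x_p,y_p)\in\Pi^+$, one bounds $|(p+\cK)\cap[A]|$ from below by combining the contribution from $P$ (approximately $r_s-1-\Theta(sy_p)$ neighbours, using the clean identity $r_s=|\cK_s|/2+O(s)$ forced by the $\pi/2$-rotation invariance of $K$), the $\Omega(s^2/C_6^2)$ boost from $B$ when $|p|$ stays within $s-s/C_6$ of $z$, and the $\Omega(sy_p)$ further neighbours from previously infected rows of $\Pi^+$ strictly below $p$. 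The main obstacle is closing this inductive estimate uniformly throughout $\Pi$, especially near the corners of $\Pi_+$ where $B$ no longer contributes and only a one-sided supply from $\Pi^+$ is available; the closure hinges on the hierarchy $C_5\gg C_6^2$ ensured by the constant chain together with the local flatness $|g|=O(s/C_3)$ rendering the curvature correction negligible at the rectangle's width scale $s/C_5$.
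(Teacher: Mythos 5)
Your geometric placement of $\Pi$ (long sides parallel to the tangent to $\partial P$ at $z$, bottom side pushed a depth $\Theta(s/C_3)$ into $P$, top side at height about $s/C_5$) and the verification of the three geometric bullet points are fine, but the core step $\Pi\cap\bbZ^2\subset[A]$ has a genuine gap, and the induction you sketch cannot be closed --- not because the constants are not tuned, but for a structural reason. Your rectangle extends to horizontal distance $3s/2$ from $z$, while $\cK$ has radius $s$, so near the two ends of $\Pi$ the ball $B=B(z,s/C_6)$ contributes nothing at all to $p+\cK$. For such a point $p$ at height $y_p$ above the boundary, the only infected sites you can invoke are $(p+\cK)\cap P$, which gives at least $r-1$ minus roughly $Wy_p$ points (here $W\in[\sqrt2 s,2s]$ is the width of $\cK$ at central height; when the outward normal is a stable direction the half-plane count can be exactly $r-1$, so there is no slack), and the previously infected part of $\Pi$ in columns nearer to $z$, which contributes at most about $(W/2)\cdot(s/C_5)$ points. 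For $y_p$ in the upper half of the rectangle the deficit $\approx Wy_p$ already exceeds this maximal possible supply, so the count never reaches $r$; quantitatively the sustainable height of the advancing front shrinks by a constant factor every $\approx W/2$ of horizontal progress, so a front of full width $s/C_5$ cannot be carried out to length $3s$. No hierarchy such as $C_5\gg C_6^2$ repairs this, because deficit and supply both scale like $s\times(\text{width})$ with the deficit carrying the larger constant --- this is precisely the reason why infection cannot cross a strip in a stable direction without outside help (compare Lemma~\ref{lem:band}).

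The paper's proof avoids pushing a single thin rectangle sideways. It takes the chord $ab$ of $P$ with endpoints on the upper boundary at $\sigma_1=x\pm C_4s$; fatness and admissibility give a rectangle $\Pi'\subset P$ of width $s$ and length $5s$ just below $ab$, and admissibility makes the cap of $P$ above $ab$ thinner than $s/(2C_5)$ (which is how the third bullet is obtained). It then runs a three-stage cascade of single bootstrap steps: from $\Pi'\cup B(z,s/C_6)$ one step infects a rectangle of length $s$ and width $s/C_5^{1/3}$ sitting on $ab$; from that together with $\Pi'$ one step infects length $2s$, width $s/C_5^{2/3}$; one more step gives the required length $3s$ and width $s/C_5$. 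Each stage closes because every target point sees at least $r-1-O(s\cdot\text{new width})$ points of the thick rectangle $\Pi'$ plus a constant fraction of the previous helper, whose width is larger by a factor $C_5^{1/3}$, so the surplus dominates the loss; and the length grows by only $s/2$ at each end per stage, staying within the reach of $\cK$ from the previous helper. To salvage your approach you would need an analogous wide-to-thin cascade (or some genuinely two-dimensional growth argument), not a one-pass sweep fed only by $P$ and the rows already infected below.
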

\begin{proof}
Fix a point $z$ as in Lemma~\ref{lem:blob}.
Let $ab$ be a chord in $P$ where both $a$ and $b$ belong to the upper boundary of $P$, $\sigma_1(a) = x-C_4s$ and $\sigma_1(b) = x+C_4s$.
Let $z'\in ab$ be such that $\sigma_1(z)=\sigma_1(z')$.
Let $a_1,a_2,a_3,a_4,a_5,z',b_5,b_4,b_3,b_2,b_1$ lie at intervals of length $s/2$ in that order on $ab$ (see Figure~\ref{fig:3iterations}). Note that, $a_1,b_1$ remain at distance at least $C_4s/2$ from $a$ and $b$. Finally, let $P'$ and $P''$ be the convex sets into which $P$ is divided by the chord $ab$ with $P''$ lying below $ab$ and $z\in P'$ (note that, if $ab\subset\partial P$, $P'$ may degenerate into the segment $ab$).

Combining Definitions~\ref{def:polygon} and \ref{def:admissible},
we obtain that $P''$ must contain a rectangle $\Pi'$ with longer side $a_1b_1$ and width $s$ (see Figure~\ref{fig:3iterations}). We claim that, for suitably large $C_5 = C_5(C_6)$, after one step of the bootstrap percolation process with initial condition $\Pi'\cup B(z,s/C_6)$, the rectangle $\Pi''\not\subset\Pi'$ with longer side $a_5b_5$ and width $s/C_5^{1/3}$ becomes infected. Indeed, any $v\in\Pi''$ satisfies $v+\cK\supset B(z,s/C_6)$, while $|(v+\cK)\cap\Pi'\cap\bbZ^2|\ge r-1-2(2s-1)s/C_5^{1/3}$, which is sufficient since $C_5\gg C_6$. 
By proceeding analogously, we obtain that, at the first step of the process with initial condition $\Pi'\cup\Pi''$, the rectangle $\Pi'''\not\subset\Pi'$ with longer side $a_4b_4$ and width $s/C_5^{2/3}$ becomes infected. 
Finally, at the first step of the process with initial condition $\Pi'\cup\Pi'''$, the rectangle $\Pi\not\subset \Pi'$ with longer side $a_3b_3$ and width $s/C_5$ becomes infected.

It remains to check that $\Pi$ has the desired properties.
The first and the second properties are immediate to verify, 
while the fourth property follows from the fact that, by Lemma~\ref{lem:blob}, the distance between $z$ and $P_{1,x}$ along the boundary of $P$ is at most $3s$.
For the third property, note that the admissibility of $P_{1,x}$ guarantees that the distance between any point in $P'$ to $ab$ is at most $\sin(1/C_3)\cdot 2C_4s <s/(2C_5)$, as desired.
\end{proof}
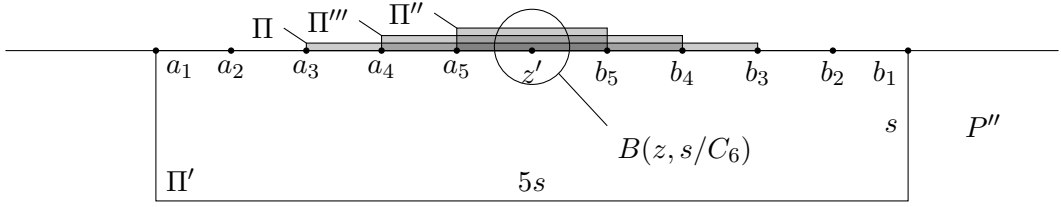
\begin{figure}
    \centering
\begin{tikzpicture}
\draw [fill=black] (0,0) circle (1pt) node[below right]{$a_{1}$};
\draw [fill=black] (1,0) circle (1pt) node[below]{$a_{2}$};
\draw [fill=black] (2,0) circle (1pt) node[below]{$a_{3}$};
\draw [fill=black] (3,0) circle (1pt) node[below]{$a_{4}$};
\draw [fill=black] (4,0) circle (1pt) node[below]{$a_{5}$};
\draw [fill=black] (5,0) circle (1pt) node[below]{$z'$};
\draw [fill=black] (6,0) circle (1pt) node[below]{$b_{5}$};
\draw [fill=black] (7,0) circle (1pt) node[below]{$b_{4}$};
\draw [fill=black] (8,0) circle (1pt) node[below]{$b_{3}$};
\draw [fill=black] (9,0) circle (1pt) node[below]{$b_{2}$};
\draw [fill=black] (10,0) circle (1pt) node[below left]{$b_{1}$};
\draw (0,0) -- (0,-2) -- (10,-2) node[midway,above]{$5s$} -- (10,0) node[midway, left]{$s$} -- cycle;
\draw[fill=black,fill opacity=0.2] (4,0) -- (6,0) -- (6,0.3) -- (4,0.3) -- cycle;
\draw[fill=black,fill opacity=0.2] (3,0) -- (7,0) -- (7,0.2) -- (3,0.2) -- cycle;
\draw[fill=black,fill opacity=0.2] (2,0) -- (8,0) -- (8,0.1) -- (2,0.1) -- cycle;
\draw (0,-2) node[above right]{$\Pi'$};
\draw (2,0.1)--(1.7,0.3) node[left]{$\Pi$};
\draw (3,0.2)--(2.7,0.4) node[left]{$\Pi'''$};
\draw (4,0.3)--(3.7,0.5) node[left]{$\Pi''$};
\draw (5,0.05) circle (0.5);
\draw (5.35355,-0.30355)--(6,-1) node[below right]{$B(z,s/C_6)$};
\draw (-2,0)--(12,0);
\draw (11,-1) node{$P''$};
\draw (3.5,-2.1)--(3.5,0) node[midway,left]{$P_{1,x}$};
\end{tikzpicture}
    \caption{Illustration of the proof of Lemma~\ref{lem:rectangle}. The curvature of $\partial P$ being extremely small, the convex set $P'$ is much thinner than the rectangle $\Pi$.
    As a result, neither the upper boundary of $P'$ nor the point $z$ lying on it very close to $z'$ are depicted. Note that the rectangles depicted need not be axis-parallel.}
    \label{fig:3iterations}
\end{figure}

Before completing the proof of Proposition~\ref{prop:step}, we state a last simple lemma used several times in its proof.

\begin{lemma}
\label{lem:two:rectangles}
Fix any $\eps > 0$ and $\alpha > 2$ allowed to depend on $s$. Let $a,b',a',b\in\bbR^2$ be four collinear points found on their common line in this order such that $|ab'| = |a'b| = |b'a'|/(\alpha-2) = s$.
Consider rectangles $\Gamma$ with dimensions $\alpha s\times s$ with side $ab$, $\Gamma_a$ with dimensions $s\times \eps s$ with side $ab'$ and $\Gamma_b$ with dimensions $s\times \eps s$ with side $a'b$ with disjoint interiors.
Then, $[\Gamma\cup \Gamma_a\cup \Gamma_b]$ contains the smallest rectangle $\bar \Gamma$ containing $\Gamma\cup\Gamma_a\cup\Gamma_b$ (see Figure~\ref{fig:two:rectangles}).
\end{lemma}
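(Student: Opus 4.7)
The plan is to show $\bar\Gamma \cap \bbZ^2 \subset [\Gamma \cup \Gamma_a \cup \Gamma_b]$ by inducting on perpendicular distance from the line through $a, b', a', b$. Let $u$ be the unit vector perpendicular to this line pointing from $\Gamma$ toward $\Gamma_a \cup \Gamma_b$, let $w$ be the unit vector along the line from $a$ toward $b$, and for $v \in \bbR^2$ set $v_u = \langle v - a, u\rangle$ (the \emph{height}) and $v_w = \langle v - a, w\rangle$ (the \emph{position along $ab$}). Under this notation, $\Gamma$ occupies $v_w \in [0,\alpha s]$, $v_u \in [-s,0]$; $\Gamma_a$ occupies $v_w\in[0,s]$, $v_u\in[0,\eps s]$; $\Gamma_b$ occupies $v_w\in[(\alpha-1)s,\alpha s]$, $v_u\in[0,\eps s]$.

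The core counting input is $|\{\kappa \in \cK : \langle \kappa, u \rangle < 0\}| \ge r - 1$ for every unit vector $u$, by the definition of $r_s$, with equality when $u$ is stable; combined with $\cK = -\cK$ (from $\pi/2$-rotation invariance of $K$), this yields $|\{\langle \kappa, u \rangle < 0\}| = |\{\langle \kappa, u \rangle > 0\}|$. I also use that $\cK \supset B(0, s/\sqrt{2}) \cap \bbZ^2$, which follows because a convex $\pi/2$-rotation invariant set $K$ with $\max_{k \in K}\|k\|_2 = 1$ contains $B(0, 1/\sqrt{2})$.

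The main induction is on the heights $\eta \in \{v_u : v \in \bar\Gamma \cap \bbZ^2,\ v_u > 0\}$ in increasing order, maintaining the hypothesis that every $v \in \bar\Gamma \cap \bbZ^2$ with $v_u < \eta$ is infected (the base case is covered by $\Gamma$, which contains all sites with $v_u \le 0$). Fix $v \in \bar\Gamma \cap \bbZ^2$ at height $\eta$; if $v \in \Gamma_a \cup \Gamma_b$ there is nothing to check. Otherwise $v_w \in (s, (\alpha - 1)s)$, so the range $v_w + [-s, s]$ stays inside $[0, \alpha s]$ and every $v + \kappa$ with $\langle \kappa, u \rangle < 0$ lies in $\bar\Gamma$ at a strictly lower height and is therefore infected; this supplies at least $|\{\langle \kappa, u\rangle < 0\}| \ge r - 1$ infected neighbors. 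If $u$ is unstable, this count is already $\ge r$ and $v$ is infected immediately. If $u$ is stable (so $u \in \cS_\boxtimes$), I sub-induct on the height-$\eta$ slice, sweeping inward from both $\Gamma_a$ and $\Gamma_b$: the first site swept on each side has a neighbor in $\Gamma_a$ or $\Gamma_b$ via a short lattice vector (e.g.\ $(-1, 0)$ for cardinal $u$, or an axis-vector rotated by $\pi/4$ for diagonal $u$), and each subsequent site admits a previously-swept same-height neighbor through a lattice vector $\kappa \in \cK$ with $\langle \kappa, u\rangle = 0$ and $\langle \kappa, w\rangle < 0$ (axis-parallel for cardinal $u$, of the form $(\pm 1, \mp 1)$ for diagonal $u$); all such $\kappa$ lie in $\cK$ by the ball containment above. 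This furnishes the missing $r$-th infected neighbor, completing the induction.

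The main obstacle I foresee is the bookkeeping at sites $v$ with $v_w$ close to $s$ or $(\alpha - 1)s$, where $v + \cK$ can straddle $\Gamma_a$ or $\Gamma_b$, and the verification that the first site in each height-slice genuinely admits a short lattice-vector neighbor inside the bump. The containment $\cK \supset B(0, s/\sqrt{2}) \cap \bbZ^2$ keeps all relevant short lattice vectors inside $\cK$, so these should reduce to routine case-by-case checks rather than a true obstacle.
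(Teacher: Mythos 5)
Your proof is correct and follows essentially the same route as the paper's: an induction over the successive lattice lines of $\bar\Gamma$ above $\Gamma$, where for an unstable normal direction the half-neighbourhood count $|\{\kappa\in\cK:\langle\kappa,u\rangle<0\}|\ge r$ infects the whole line at once, and for a stable direction one sweeps along the line starting from a site of $\Gamma_a$ (or $\Gamma_b$), using a same-height step vector of length at most $\sqrt2$ that lies in $\cK$. The paper sweeps only from the $\Gamma_a$ side, but this is an immaterial difference.
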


\begin{figure}
    \centering
\begin{tikzpicture}[x=2cm,y=2cm]
\draw [fill=black] (0,0) circle (1pt) node[below right]{$a\phantom{'}$};
\draw [fill=black] (-0.15,0.25) node[below]{$\varepsilon s$};
\draw [fill=black] (0.5,0.53) node[below]{$s$};
\draw [fill=black] (3.5,0.53) node[below]{$s$};
\draw [fill=black] (1,0) circle (1pt) node[below]{$b'$};
\draw [fill=black] (3,0) circle (1pt) node[below]{$a'$};
\draw [fill=black] (4,0) circle (1pt) node[below left]{$b\phantom{'}$};
\draw[fill=black, fill opacity=0.2](0,0) -- (0,-1) -- (4,-1) node[midway,above,opacity=1]{$\alpha s$} -- (4,0) node[midway, left,opacity=1]{$s$} -- cycle;
\draw[fill=black,fill opacity=0.2] (0,0) rectangle (1,0.3);
\draw[fill=black,fill opacity=0.2] (3,0) rectangle (4,0.3);
\draw (0.5,0.15) node{$\Gamma_a$};
\draw (3.5,0.15) node{$\Gamma_b$};
\draw (2,-0.5) node{$\Gamma$};
\draw (2,0.15) node{$\bar\Gamma$};
\draw [very thick] (0,-1) rectangle (4,0.3);
\end{tikzpicture}
    \caption{Illustration of Lemma~\ref{lem:two:rectangles}. The rectangles $\Gamma_a,\Gamma_b,\Gamma$ are shaded, while the rectangle $\bar \Gamma$ is thickened. Note that the rectangles depicted need not be axis-parallel.}
    \label{fig:two:rectangles}
\end{figure}
\begin{proof}
Let $\Pi_0 = \varnothing$ and $\Pi_1\subset \Pi_2\subset \ldots\subset \Pi_k$ be the list of all rectangles with parallel sides $a'b'$ and $\ell_i\subset \bar\Gamma\setminus \Gamma$ such that $\ell_i\cap \mathbb Z^2\neq \varnothing$. Also, denote by $u\in S^1$ the normal vector to $\Gamma$ at $a'$.

We show by induction that, for every $i\in \{0,\ldots,k\}$, $\Pi_i\subset [\Gamma\cup \Gamma_a\cup \Gamma_b]$.
The base case being trivial, suppose that $\Pi_{i-1}\subset [\Gamma\cup \Gamma_a\cup \Gamma_b]$ for some $i\in \{1,\ldots,k\}$. 
Let $x_1,\ldots,x_t$ be the set of integer points met on $\ell_i$ in this order, with $x_1$ closest to $\Gamma_a$.
If $u\notin \cS$, then, for every $j\in \{1,\ldots,t\}$,
\[|(x_j+\cK)\cap (\Pi_{i-1}\cup \Gamma\cup \Gamma_a\cup \Gamma_b)| \ge r,\]
and the conclusion is immediate.
Otherwise, the point $x_0 := 2x_1-x_2$ belongs to $\Gamma_a\subset [A]$ and $|x_0x_1|=|x_1x_2|=\ldots=|x_{t-1}x_t|\le \sqrt{2}\le \min\{|y|: y\in \partial (sK)\}$ for $s\ge 2$. Moreover, for every $j\in \{1,\ldots,t\}$,
\[|(x_j+\cK)\cap (x_j+\bbH_u)\cap (\Pi_{i-1}\cup \Gamma\cup \Gamma_a\cup \Gamma_b)| = r-1.\] 
Together with the induction hypothesis,
this implies that $x_1,\ldots,x_t$ may be infected in this order, which finishes the induction and the proof.
\end{proof}

The proof of Proposition~\ref{prop:step} is a bit long, so we present a high-level overview of it first.
Given a fat horizontal convex set $P$, we first find an admissible vertical chord $P_{1,x}$ at distance $\Theta(\diam(P))$ from each of the left and the right end of $P$.
Then, Lemma~\ref{lem:rectangle} ensures the existence of a rectangle $\Pi\subset [A]$ near $P_{1,x}$ sticking out of $P$.
We draw two secant lines, $\ell_-$ and $\ell_+$, from the midpoint $z$ of the side $\Pi_+$ of $\Pi$ (recall Lemma~\ref{lem:rectangle}) to $P$ so that $\ell_-$ and $\ell_+$ intersect $P$ in chords of length $C_6 s$.
A new convex~set~$\bar P$ is obtained from $P$ by adding the region between $\ell_-, \ell_+$ and $P$ (the vertices there turn out to be in $[A]$ thanks to Lemma~\ref{lem:two:rectangles}) and discarding the vertices on the other side of $\ell_-$ and $\ell_+$.
While elementary geometric considerations suffice to see that $|\bar P|\ge |P|+s^2$, $\bar P$ might not be fat. 
We correct this defect by several additional cuts near the top, bottom, left and right sides of $P$. In doing so, we discard a region of area much smaller than $|\bar P\setminus P|$. 
We note that the constant $C_6$ in the above argument will play an independent role from its occurrence in Lemma~\ref{lem:blob}, but is reused for convenience. We also restate Proposition~\ref{prop:step} for the reader's convenience.

\propstep*

\begin{figure}[t]
\centering
\begin{tikzpicture}[scale=0.3,line cap=round,line join=round,rotate=0,x=1cm,y=1cm]
\clip(-22,-10.8) rectangle (31.555829861442263,12.2);
\fill[fill=black,fill opacity=0.10000000149011612] (-4.337005381108515,5.988712578748761) -- (-6.309311075041817,3.356307775016381) -- (-3.6769062713094365,1.3840020810830784) -- (-1.7046005773761341,4.016406884815459) -- cycle;
\fill[fill=black,fill opacity=0.10000000149011612] (-4.337005381108515,5.988712578748761) -- (-3.469613458482735,2.8158312299431367) -- (-0.2967321096771114,3.6832231525689165) -- (-1.1641240323028912,6.856104501374541) -- cycle;
\fill[fill=black,fill opacity=0.10000000149011612] (11.555905471635983,10.333464551863495) -- (12.423297394261763,7.160583203057872) -- (15.596178743067389,8.027975125683652) -- (14.72878682044161,11.200856474489274) -- cycle;
\fill[fill=black,fill opacity=0.10000000149011612] (-13.955486488871877,-6.848920020785611) -- (-11.323081685139497,-8.82122571471891) -- (-9.350775991206195,-6.1888209109865295) -- (-11.983180794938571,-4.216515217053232) -- cycle;
\fill[fill=black,fill opacity=0.10000000149011612] (-11.323081685139497,-8.82122571471891) -- (-8.690676881407125,-10.79353140865221) -- (0.9278043835380159,2.0441014006701628) -- (-1.7046005773761341,4.016406884815459) -- cycle;
\fill[fill=black,fill opacity=0.10000000149011612] (-3.469613458482735,2.8158312299431367) -- (-2.6022215358569554,-0.3570501188624877) -- (16.463570665693165,4.855093776878029) -- (15.596178743067389,8.027975125683652) -- cycle;

\draw  (-15.803130385088094,-8.26023715799769)-- (-8,0);
\draw  (-8,0)-- (-6,2);
\draw  (0,6)-- (8,9);
\draw  (8,9)-- (16,12);
\draw  (16,12)-- (22,12);
\draw  (-15.803130385088094,-8.26023715799769)-- (-18,-16);
\draw  (-18,-16)-- (-18,-18);

\draw  (-7.323013693450844,3.984520540176266)-- (-1.3355258673267407,8.003288800990111);
\draw  (-7.323013693450844,3.984520540176266)-- (-6,2);
\draw  (-1.3355258673267407,8.003288800990111)-- (0,6);
\draw  (-4.337005381108515,5.988712578748761)-- (17.652018546459985,12);
\draw  (-4.337005381108515,5.988712578748761)-- (-16.285081624152532,-9.958193316364184);
\draw  (-4.337005381108515,5.988712578748761)-- (-6.309311075041817,3.356307775016381);
\draw  (-6.309311075041817,3.356307775016381)-- (-3.6769062713094365,1.3840020810830784);
\draw  (-3.6769062713094365,1.3840020810830784)-- (-1.7046005773761341,4.016406884815459);
\draw  (-1.7046005773761341,4.016406884815459)-- (-4.337005381108515,5.988712578748761);
\draw  (-4.337005381108515,5.988712578748761)-- (-3.469613458482735,2.8158312299431367);
\draw  (-3.469613458482735,2.8158312299431367)-- (-0.2967321096771114,3.6832231525689165);
\draw  (-0.2967321096771114,3.6832231525689165)-- (-1.1641240323028912,6.856104501374541);
\draw  (-1.1641240323028912,6.856104501374541)-- (-4.337005381108515,5.988712578748761);
\draw  (11.555905471635983,10.333464551863495)-- (12.423297394261763,7.160583203057872);
\draw  (12.423297394261763,7.160583203057872)-- (15.596178743067389,8.027975125683652);
\draw  (15.596178743067389,8.027975125683652)-- (14.72878682044161,11.200856474489274);
\draw  (14.72878682044161,11.200856474489274)-- (11.555905471635983,10.333464551863495);
\draw  (-13.955486488871877,-6.848920020785611)-- (-11.323081685139497,-8.82122571471891);
\draw  (-11.323081685139497,-8.82122571471891)-- (-9.350775991206195,-6.1888209109865295);
\draw  (-9.350775991206195,-6.1888209109865295)-- (-11.983180794938571,-4.216515217053232);
\draw  (-11.983180794938571,-4.216515217053232)-- (-13.955486488871877,-6.848920020785611);
\draw  (-11.323081685139497,-8.82122571471891)-- (-8.690676881407125,-10.79353140865221);
\draw  (-8.690676881407125,-10.79353140865221)-- (0.9278043835380159,2.0441014006701628);
\draw  (0.9278043835380159,2.0441014006701628)-- (-1.7046005773761341,4.016406884815459);
\draw  (-1.7046005773761341,4.016406884815459)-- (-11.323081685139497,-8.82122571471891);
\draw  (-3.469613458482735,2.8158312299431367)-- (-2.6022215358569554,-0.3570501188624877);
\draw  (-2.6022215358569554,-0.3570501188624877)-- (16.463570665693165,4.855093776878029);
\draw  (16.463570665693165,4.855093776878029)-- (15.596178743067389,8.027975125683652);
\draw  (15.596178743067389,8.027975125683652)-- (-3.469613458482735,2.8158312299431367);
\draw  (-6,2)-- (-3,4.5);
\draw  (-3,4.5)-- (0,6);
\draw [fill=black] (-4.337005381108515,5.988712578748761) circle (3.5pt);
\draw (-5,6.5) node {$z$};
\draw [fill=black] (17.652018546459985,12) circle (3.5pt);
\draw (17.7,11.4) node {$w'$};

\draw (3.433043572909109,9.234807923139085) node {$\ell_+$};
\draw [fill=black] (11.555905471635983,10.333464551863495) circle (3.5pt);
\draw (11,9.5) node {$u'$};
\draw [fill=black] (-16.285081624152532,-9.958193316364184) circle (1.5pt);
\draw (-17,-10) node {$w$};

\draw (-8.947096043238638,1.3380110486415382) node {\large{$\ell_-$}};
\draw [fill=black] (-11.983180794938571,-4.216515217053232) circle (3.5pt);
\draw (-12.5,-3.5) node {$u$};
\end{tikzpicture}
\caption{Illustration of the application of Lemma~\ref{lem:two:rectangles} in the proof of Proposition~\ref{prop:step}. The two gray $U$-shaped regions are copies of the region in Figure~\ref{fig:two:rectangles} for $\eps=1$ and possibly different values of the parameter~$\alpha$.}
\label{fig:rectangles}
\end{figure}

\begin{proof}
Suppose that $P$ is a fat horizontal convex set; if not, it suffices to exchange the $x$-axis and the $y$-axis in the following considerations. 
Fix an admissible vertical chord $P_{1,x}$, as given by Lemma~\ref{lem:adm_strips}, and let $\Pi$ be the rectangle ensured by Lemma~\ref{lem:rectangle}.
Without loss of generality, we assume that $\Pi$ intersects the upper boundary of $P$.
Let $z$ be the midpoint of the side $\Pi_+$ of $\Pi$, and consider a line $\ell$ through $z$ not intersecting $P$. 
Let $\ell_+$ (resp.\ $\ell_-$) be obtained by rotating $\ell$ around $z$ clockwise (resp.\ anti-clockwise) direction until the corresponding chord $u'w'=\ell_+\cap P$ (resp.\ $uw=\ell_-\cap P$) satisfies $|u'w'|=C_6s$ (resp.\ $|uw|=C_6s$) for the first time.
Moreover, we assume that $w,u,u',w'$ appear on $\partial P$ in this order, see Figure~\ref{fig:rectangles}.
Note that $uw$ may be contained in a side of $P$ of length larger than $C_6s$: in this case, $u$ is defined as the point of contact of $\ell_-$ and $P$ closest to $z$, and a similar convention holds for $u'$. 

Notice that, by Definition~\ref{def:polygon} (or Claim~\ref{cl:square}), $P$ contains an axis-parallel square of side length $C_5s$ with left side contained in the left side of $P$. 
Since $C_5\gg C_6$, this implies that $w$ is either on the upper boundary of $P$ or on its left side. A similar argument applies to $w'$.

By admissibility of the chord $P_{1,x}$ and using that $1\ll C_6\ll C_4\ll C_3$, the angle between $\ell_-$ and the $x$-axis is in $[-5\pi/11, 5\pi/11]$ and similarly for $\ell_+$.
In particular, $x_-+\sqrt 2 s\le \sigma_1(u) < \sigma_1(z) < \sigma_1(u')\le x_+-\sqrt 2 s$.
In turn, the latter observation and the (vertical) fatness of $P$ imply that $P$ contains two squares of dimensions $s\times s$ with one side lying on $\ell_-$ and corners $z$ and $u$, and a rectangle with dimensions $s$ and $|zu|+s$ sharing a segment of length $s$ with each of the said squares (an analogous statement for $\ell_+$ and $u'$ holds as well, see Figure~\ref{fig:rectangles}).
In both cases, applying Lemma~\ref{lem:two:rectangles} for the two squares (playing the roles of $\Gamma_1$ and $\Gamma_2$) and the rectangle (playing the role of $\Gamma$), we obtain that all vertices between $P$ and the segments $zu$ and $zu'$ become eventually infected.
Denote by $\bar P$ the convex set obtained by adding the triangle $uu'z$ to $P$ and discarding the parts of $P$ cut away by the lines $\ell_-$ and $\ell_+$. 

\begin{claim}\label{cl:w}
$|P\setminus \bar P|\le 2(C_6 s)^2$ and $|\bar P\setminus P|\ge C_4 s^2/8C_5$.
\end{claim}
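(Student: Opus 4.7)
Plan: We prove the two inequalities separately. By construction, $P\setminus\bar P$ decomposes as the disjoint union of the two caps $\kappa_\pm=P\cap H_\pm$, where $H_\pm$ denotes the closed half-plane bounded by $\ell_\pm$ and not containing $\ell_\mp\cap P$. The region $\bar P\setminus P$ is the ``tent'' bounded by the segment $zu\subset\ell_-$, the segment $zu'\subset\ell_+$, and the arc of $\partial P$ from $u$ to $u'$ on the side of $z$, so that $|\bar P\setminus P|=|\triangle zuu'|-|P\cap\triangle zuu'|$.

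For the upper bound on $|P\setminus\bar P|$, by symmetry it suffices to show $|\kappa_-|\le(C_6s)^2$. The cap $\kappa_-$ is convex with the chord $uw$ of length $C_6s$ as one boundary and an arc of $\partial P$ as the other; slicing by lines parallel to $\ell_-$ gives $|\kappa_-|\le C_6s\cdot h$, where $h$ is its perpendicular height. Because $z$ lies within horizontal distance $3s+s/C_5\ll C_4s$ of $P_{1,x}$ and $|uw|=C_6s\ll C_4s$, both $u$ and $w$ (and the arc between them) stay in the admissible region. Admissibility bounds the total turning of the tangent to $\partial P$ along this arc by $O(1/C_3)$; moreover, since $\ell_-$ has been rotated only slightly past a supporting line of $P$ through $z$, the angle between the chord $uw$ and the tangent to $\partial P$ at each endpoint is also $O(1/C_3)$. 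Integrating yields $h\le O(C_4s/C_3)\le C_6s$ by the hierarchy $C_4/C_3\le C_6$, so $|\kappa_-|\le(C_6s)^2$ and summing gives the required bound.

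For the lower bound on $|\bar P\setminus P|$, the crucial step is to show $|uu'|\ge C_4s$. Admissibility implies that the tangent slopes of $\partial P$ vary by at most $1/C_3$ over the horizontal range $[x-C_4s,x+C_4s]$, so the upper boundary of $P$ has average radius of curvature at least $\sim C_3C_4s$ there. The tangent lines from $z$ (at distance $\delta\in[s/(2C_5),s/C_5]$ above $P$) to $\partial P$ therefore touch at horizontal distance at least $\sqrt{2\cdot C_3C_4s\cdot\delta}\ge s\sqrt{C_3C_4/C_5}$ from $z$, which exceeds $2C_4s$ when $C_3\gg C_4C_5$. Since the rotation needed to reach chord length $C_6s$ is small, the points $u$ and $u'$ lie within $C_6s/2\ll C_4s$ of the corresponding tangent points, so $|uu'|\ge 4C_4s-C_6s\ge 3C_4s$. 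Since $uu'\subset P$ and $z$ is at distance at least $\delta$ from $P$, the perpendicular distance from $z$ to the line $uu'$ is at least $\delta\ge s/(2C_5)$; therefore $|\triangle zuu'|\ge\tfrac{1}{2}\cdot 3C_4s\cdot s/(2C_5)=3C_4s^2/(4C_5)$. Finally, admissibility keeps the arc of $\partial P$ from $u$ to $u'$ within vertical distance $O(C_4s/C_3)$ of the chord $uu'$, so $|P\cap\triangle zuu'|\le|uu'|\cdot O(C_4s/C_3)\le C_4s^2/(2C_5)$ for $C_3$ large enough. Subtracting gives $|\bar P\setminus P|\ge C_4s^2/(4C_5)\ge C_4s^2/(8C_5)$.

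Main obstacle: uniformly justifying $|uu'|\ge C_4s$ across all convex shapes of $\partial P$ compatible with admissibility, especially those mixing flat and curved segments or containing mild kinks. The smooth-curvature calculation above captures the essential scenario, but a fully rigorous treatment requires a comparison argument showing that $\partial P$ cannot bend down fast enough to bring the tangent points from $z$ closer than the claimed $s\sqrt{C_3C_4/C_5}$, using only the slope condition $\alpha^+_{x-C_4s}-\alpha^+_{x+C_4s}\le 1/C_3$ directly on the graph of $\partial P$.
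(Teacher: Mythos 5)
There is a genuine gap, and it is exactly the point you flag as your ``main obstacle'': the lower bound on how far the chord endpoints lie from $z$. Admissibility (Definition~\ref{def:admissible}) only constrains the slopes of $\partial P$ on the horizontal window $[x-C_4s,\,x+C_4s]$; outside this window the boundary may turn arbitrarily fast (for instance a corner right at the window's edge, or $w$ sitting on the left side of $P$, which the construction explicitly allows). Your ``average radius of curvature at least $\sim C_3C_4s$'' step therefore fails: if $\partial P$ bends sharply just past $x+C_4s$, the rotating line first bites into $P$ there, the correct conclusion is only $|wz|\ge C_4s$ (hence $|uz|\ge(C_4-C_6)s$), and your claims that the tangent points are at horizontal distance at least $2C_4s$ from $z$ and that $|uu'|\ge 3C_4s$ are false in general. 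The same out-of-window issue undermines two further steps. In the first inequality you assert that $u$, $w$ and the arc between them stay in the admissible region because $|uw|=C_6s$ and $z$ is near $P_{1,x}$; but the chord is far from $z$ (your own second part places $u$ at horizontal distance more than $2C_4s$ from $z$, i.e.\ outside the window), and $w$ may lie on the left side of $P$ where the slope is nearly vertical, so the height bound $h=O(C_4s/C_3)$ is unjustified. In the second inequality, the bound $|P\cap\triangle zuu'|\le|uu'|\cdot O(C_4s/C_3)$ again invokes slope control on portions of the arc lying outside the window.

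The paper's proof supplies precisely the missing comparison argument and is careful to stay inside the window. It proves $|wz|,|w'z|\ge C_4s$ by contradiction: if $|wz|<C_4s$, the right-angle point $z_w$ lies in the window, $|zz_w|\ge s/(2C_5)$, so the angle at $w$ is at least $\arcsin(1/(2C_4C_5))\gg 1/C_3$, forcing a slope variation larger than $1/C_3$ across the window and contradicting admissibility. Then, rather than estimating $P\cap\triangle zuu'$ over a span it cannot control, it passes to the inner triangle $\hat u\hat u'z$, where $\hat u,\hat u'\in\partial P$ have $x$-coordinates $x\pm C_4s/2$ (well inside the window): this triangle is contained in $\triangle uu'z$, has area at least $\tfrac12\,C_4s\cdot s/(2C_5)$ since the altitude from $z$ is at least $s/(2C_5)$, and more than half of it lies outside $P$, giving $|\bar P\setminus P|\ge C_4s^2/(8C_5)$. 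For the upper bound it simply encloses each cut-off cap in the axis-parallel rectangle with diagonal $uw$ (resp.\ $u'w'$), of area at most $(C_6s)^2$, with no slope control needed. To repair your argument you would need to replace the curvature heuristic by such a window-confined contradiction argument and avoid any use of admissibility at points beyond horizontal distance $C_4s$ from $x$.
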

\begin{proof}[Proof of Claim~\ref{cl:w}]
First, the inequality $|P\setminus \bar P|\le 2(C_6 s)^2$ follows from the fact that $P\setminus \bar P$ is contained in the union of the two axis-parallel rectangles with diagonals $uw$ and $u'w'$, and each of them has area at most $(C_6s)^2$.

Before proving the second inequality, we show by contradiction that $|wz|\ge C_4s$; the inequality $|w'z|\ge C_4 s$ follows similarly.
Suppose that $|wz| < C_4s$ and let $z_w$ be the unique point on the upper boundary of $P$ such that $zz_ww$ is a right angle (obtained by intersecting the upper boundary of $P$ with the circle with diameter $wz$). In particular, $\sigma_1(z_w)\in [x-C_4s, x+C_4s]$.
 However, $|zz_w|\ge s/2C_5$ and angle $z_wwz$ has measure $\arcsin(|zz_w|/|wz|) \ge \arcsin(1/(2C_5C_4))\gg 1/C_3$.
As a result, $\alpha^+_{x-C_4s}-\alpha^+_{x+C_4s}> 1/C_3$, contradicting the admissibility of $P_{1,x}$. Hence, $|wz|\ge C_4s$.

Now, we focus on the inequality $|\bar P\setminus P|\ge C_4 s^2/20C_5$.
Define $\hat u$ and $\hat u'$ to be the points on the upper boundary of $P$ with $x$-coordinates $x-C_4s/2$ and $x+C_4s/2$, respectively.
Since $\min\{|uz|,|u'z|\}\ge \min\{|wz|,|w'z|\}-C_6s\ge (C_4-C_6)s$ and $C_3\gg 1$, triangle $\hat u\hat u'z$ is contained in triangle $uu'z$ and more than half of its area is outside $P$.
At the same time, $|\hat u\hat u'| \ge C_4s$ and, by definition of $\Pi$ (see Lemma~\ref{lem:rectangle}), the altitude from $z$ to $\hat u\hat u'$ is at least $s/(2C_5)$.
Hence, triangle $\hat u\hat u'z$ has area at least $C_4 s^2/4C_5$ and, since at least half of it is outside $P$, the second inequality follows.
\end{proof}

We would ideally like to set $P' = \bar P$. Indeed, if $\bar P$ is fat, Claim~\ref{cl:w} would allow us to conclude.  
The aim of the proof is to discard some parts of $\bar P$ of suitably small total area, thus correcting the absence of fatness. To do so, we analyse different cases for the positions of $z,w,u,u',w'$ relative to the top, bottom, left and right sides of $P$. In fact, we explain the (local) modifications around $uw$ required to obtain $P'$: the modifications around $u'w'$ are similar and do not interfere with the ones around $uw$.

\vspace{1em}
\noindent
\textbf{Case 1.} Suppose that $\sigma_2(z) \ge y^+$. First of all, define $\bar y = \max\{y: |\bar{P}_{2,y}| = C_5s\}$ and note that the $x$-coordinates of the endpoints of $\bar P_{2,\bar y}$ are within distance $C_5s$ from $\sigma_1(z)$ and $C_5\ll C_4$.
Hence, discarding the part of $\bar P$ above $\bar P_{2,\bar y}$ reduces the area of $\bar P$ by at most $|B(z,C_5s)|\le 4(C_5 s)^2$.
It remains to analyse whether discarding additional vertices around $w$ (and similarly around $w'$) is necessary to preserve the vertical fatness.

\vspace{1em}
\noindent
\textbf{Case 1.1.} Suppose that $|\bar P_{1,x_-}|\ge C_5s$. Then, no further modifications around $w$ are necessary.

\vspace{1em}
\noindent
\textbf{Case 1.2.} Suppose that $|\bar P_{1,x_-}| < C_5s$; in particular, $\sigma_1(w) = x_-$. 
Define $\bar x = \min\{x: |\bar{P}_{1,x}| = C_5s\}$ and suppose that $\bar x\le \max\{\sigma_1(\bar P_{2,y_-})\} - C_5s$.
Note that $\bar x\in (x_-, x_-+C_6s]$: indeed, the vertical chords through $u$ in $P$ and $\bar P$ coincide and have length at least $C_5s$ by fatness of $P$, see Figure~\ref{fig:case1.2}.
Hence, discarding the part of $\bar P$ on the left of $\bar P_{1,\bar x}$ ensures that the left side of the obtained convex set has length $C_5s$, leaves its bottom side of length at least $C_5s$ and reduces the area of $\bar P$ by at most $C_5 s\cdot C_6s$.

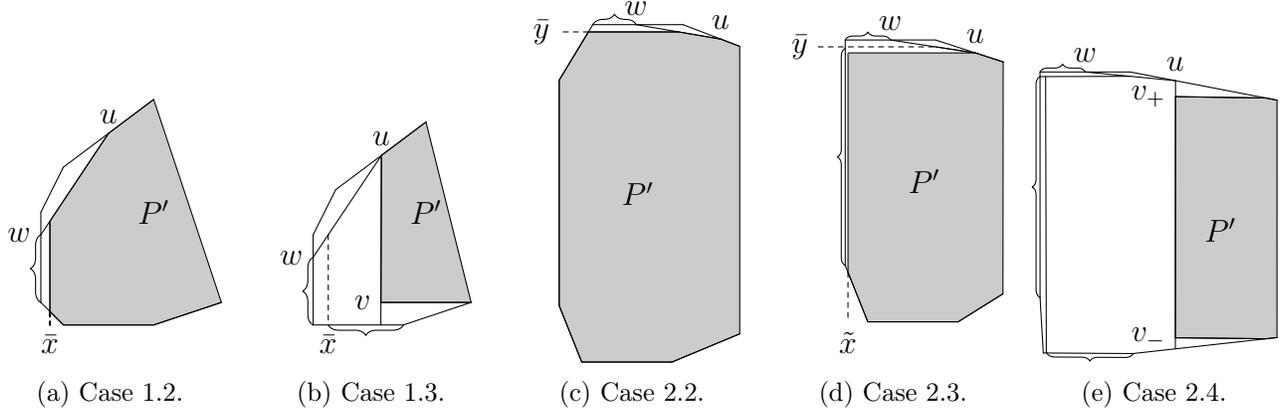
\begin{figure}[t]
     \centering
     \begin{subfigure}[b]{0.19\textwidth}
         \centering
\begin{tikzpicture}[scale=0.3,line cap=round,line join=round,x=1cm,y=1cm]

\fill[fill=black,fill opacity=0.2] (-3,8) -- (-4.999282986493508,6.500537760129869) -- (-7.599971324257282,2.5999713242572824) -- (-7.599971324257282,-1.4000286757427172) -- (-7,-2) -- (-3,-2) -- (0,-1) -- cycle;
\draw  (-8,3)-- (-8,-1);
\draw  (-8,3)-- (-7,5);
\draw  (-7,5)-- (-3,8);
\draw  (-4.999282986493508,6.500537760129869)-- (-8,2);
\draw  (-8,-1)-- (-7,-2);
\draw  (-7,-2)-- (-3,-2);
\draw  (-3,-2)-- (0,-1);
\draw  (-7.599971324257282,2.5999713242572824)-- (-7.599971324257282,-1.4000286757427172);
\draw  (-3,8)-- (-4.999282986493508,6.500537760129869);
\draw  (-4.999282986493508,6.500537760129869)-- (-7.599971324257282,2.5999713242572824);
\draw  (-7.599971324257282,2.5999713242572824)-- (-7.599971324257282,-1.4000286757427172);
\draw  (-7.599971324257282,-1.4000286757427172)-- (-7,-2);
\draw  (-7,-2)-- (-3,-2);
\draw  (-3,-2)-- (0,-1);
\draw  (0,-1)-- (-3,8);
\draw [line width=0.8pt,dash pattern=on 1.9pt off 1.9pt] (-7.6,-1.4) -- (-7.6,-2);
\draw [decorate,decoration={brace,amplitude=4pt},xshift=0pt,yshift=0pt]
(-8,-1) -- (-8,2);

\draw (-5,6.5) node[above] {\large{$u$}};
\draw (-8,2) node[left] {\large{$w$}};
\draw (-3,3) node {\large{$P'$}};
\draw (-7.6,-2) node[below] {\large{$\bar x$}};
\end{tikzpicture}
\caption{Case 1.2.}
\label{fig:case1.2}
     \end{subfigure}
     \begin{subfigure}[b]{0.19\textwidth}
         \centering
\begin{tikzpicture}[scale=0.3,line cap=round,line join=round,x=1cm,y=1cm]
\fill[fill=black,fill opacity=0.2] (9,8) -- (7.023541816948786,6.517656362711589) -- (7.003131506010898,-0.000009806426062283711) -- (11,0) -- cycle;
\draw  (9,8)-- (5,5);
\draw  (5,5)-- (4,3);
\draw  (4,3)-- (4,-1);
\draw  (4,-1)-- (8,-1);
\draw  (8,-1)-- (11,0);
\draw  (4,2)-- (7.023541816948786,6.517656362711589);
\draw  (7.023541816948786,6.517656362711589)-- (7,-1);
\draw  (7.003131506010898,-0.000009806426062283711)-- (11,0);
\draw  (9,8)-- (7.023541816948786,6.517656362711589);
\draw  (7.023541816948786,6.517656362711589)-- (7.003131506010898,-0.000009806426062283711);
\draw  (7.003131506010898,-0.000009806426062283711)-- (11,0);
\draw  (11,0)-- (9,8);
\draw [dash pattern=on 1.9pt off 1.9pt] (4.67,3) -- (4.67,-1);
\draw [decorate,decoration={brace,amplitude=4pt},xshift=0pt,yshift=0pt]
(8,-1) -- (4.67,-1);
\draw [decorate,decoration={brace,amplitude=4pt},xshift=0pt,yshift=0pt]
(4,-1) -- (4,2);

\draw (4.67,-1) node[below] {\large{$\bar x$}};
\draw (4,2) node[left] {\large{$w$}};
\draw (7.0235,6.518) node[above] {\large{$u$}};
\draw (7,0) node[left] {\large{$v$}};
\draw (9,4) node {\large{$P'$}};
\end{tikzpicture}
\caption{Case 1.3.}
\label{fig:case1.3}
     \end{subfigure}
     \begin{subfigure}[b]{0.19\textwidth}
         \centering
\begin{tikzpicture}[scale=0.3,line cap=round,line join=round,yscale=2.5,x=1cm,y=1cm]
\fill[line width=0.4pt,color=black,fill=black,fill opacity=0.2] (-5.488995772928142,3.615123580606025) -- (-6.330299916468469,3.744075225886489) -- (-8.194343665704999,3.8714539711318072) -- (-12.194343665704995,3.8714539711318072) -- (-13.488995772928142,3.015123580606025) -- (-13.488995772928142,-0.984876419393975) -- (-12.488995772928142,-1.984876419393975) -- (-8.488995772928142,-1.984876419393975) -- (-5.488995772928142,-1.484876419393975) -- cycle;

\draw [line width=0.4pt] (-8,4)-- (-12,4);
\draw [line width=0.4pt] (-12,4)-- (-13.488995772928142,3.015123580606025);
\draw [line width=0.4pt] (-13.488995772928142,3.015123580606025)-- (-13.488995772928142,-0.984876419393975);
\draw [line width=0.4pt] (-13.488995772928142,-0.984876419393975)-- (-12.488995772928142,-1.984876419393975);
\draw [line width=0.4pt] (-12.488995772928142,-1.984876419393975)-- (-8.488995772928142,-1.984876419393975);
\draw [line width=0.4pt] (-8.488995772928142,-1.984876419393975)-- (-5.488995772928142,-1.484876419393975);
\draw [line width=0.4pt] (-8,4)-- (-5.488995772928142,3.615123580606025);
\draw [line width=0.4pt] (-6.330299916468469,3.744075225886489)-- (-10.075469289445891,4);
\draw [line width=0.4pt] (-8.194343665704999,3.8714539711318072)-- (-12.194343665704995,3.8714539711318072);
\draw [line width=0.4pt,dash pattern=on 1.9pt off 1.9pt] (-12.194343665704995,3.8714539711318072)-- (-13.463634174373997,3.8714539711318072);
\draw [line width=0.4pt,color=black] (-5.488995772928142,3.615123580606025)-- (-6.330299916468469,3.744075225886489);
\draw [line width=0.4pt,color=black] (-6.330299916468469,3.744075225886489)-- (-8.194343665704999,3.8714539711318072);
\draw [line width=0.4pt,color=black] (-8.194343665704999,3.8714539711318072)-- (-12.194343665704995,3.8714539711318072);
\draw [line width=0.4pt,color=black] (-12.194343665704995,3.8714539711318072)-- (-13.488995772928142,3.015123580606025);
\draw [line width=0.4pt,color=black] (-13.488995772928142,3.015123580606025)-- (-13.488995772928142,-0.984876419393975);
\draw [line width=0.4pt,color=black] (-13.488995772928142,-0.984876419393975)-- (-12.488995772928142,-1.984876419393975);
\draw [line width=0.4pt,color=black] (-12.488995772928142,-1.984876419393975)-- (-8.488995772928142,-1.984876419393975);
\draw [line width=0.4pt,color=black] (-8.488995772928142,-1.984876419393975)-- (-5.488995772928142,-1.484876419393975);
\draw [line width=0.4pt,color=black] (-5.488995772928142,-1.484876419393975)-- (-5.488995772928142,3.615123580606025);

\draw [decorate,decoration={brace,amplitude=4pt},xshift=0pt,yshift=0pt]
(-12,4) -- (-10.075,4);

\draw (-6.33,3.74) node[above] {\large{$u$}};
\draw (-10.075,4) node[above] {\large{$w$}};
\draw (-13.463634174373997,3.8714539711318072) node[left] {\large{$\bar y$}};
\draw (-10,1) node {\large{$P'$}};
\end{tikzpicture}
\caption{Case 2.2.}
\label{fig:case2.2}
     \end{subfigure}
     \begin{subfigure}[b]{0.19\textwidth}
         \centering
\begin{tikzpicture}[scale=0.3,line cap=round,line join=round,yscale=2.5,x=1cm,y=1cm]

\fill[line width=0.4pt,color=black,fill=black,fill opacity=0.2] (4.999023650235393,3.6107719335267796) -- (3.7689878304589643,3.7704120429960075) -- 
(-1.8781699664039329,3.7704120429960075) -- (-1.8781699664039329,-0.1218300335960672) -- (-1,-1) -- (3,-1) -- (4.9882462399321525,-0.5033010132382648) -- cycle;

\draw [line width=0.4pt] (3.7689878304589643,3.7704120429960075) -- 
(-1.8781699664039329,3.7704120429960075);
\draw [line width=0.4pt] (-2,4)-- (2,4);
\draw [line width=0.4pt] (5.003314064531629,3.610215102419175)-- (2,4);
\draw [line width=0.4pt] (-2,4)-- (-2,0);
\draw [line width=0.4pt] (-2,0)-- (-1,-1);
\draw [line width=0.4pt] (-1,-1)-- (3,-1);
\draw [line width=0.4pt] (3,-1)-- (4.9882462399321525,-0.5033010132382648);
\draw [line width=0.4pt] (0,4)-- (3.7689878304589643,3.7704120429960075);
\draw [line width=0.4pt,dash pattern=on 1.9pt off 1.9pt] (-3.2,3.8781699664039326)-- (2,3.8781699664039326);
\draw [line width=0.4pt,dash pattern=on 1.9pt off 1.9pt] (-1.8781699664039329,-0.1218300335960672)-- (-1.8799670857858175,-1.005178812759611);
\draw [line width=0.4pt,color=black] (4.999023650235393,3.6107719335267796)-- (3.7689878304589643,3.7704120429960075);
\draw [line width=0.4pt,color=black] (3.7689878304589643,3.7704120429960075)-- (2,3.8781699664039326);
\draw [line width=0.4pt,color=black] (-1.8781699664039329,3.75)-- (-1.8781699664039329,-0.1218300335960672);
\draw [line width=0.4pt,color=black] (-1.8781699664039329,-0.1218300335960672)-- (-1,-1);
\draw [line width=0.4pt,color=black] (-1,-1)-- (3,-1);
\draw [line width=0.4pt,color=black] (3,-1)-- (4.9882462399321525,-0.5033010132382648);
\draw [line width=0.4pt,color=black] (4.9882462399321525,-0.5033010132382648)-- (4.999023650235393,3.6107719335267796);
\draw [decorate,decoration={brace,amplitude=3pt},xshift=0pt,yshift=0pt]
(-2,0)--(-2,3.85) node {};
\draw [decorate,decoration={brace,amplitude=3pt},xshift=0pt,yshift=0pt]
(-2,4)--(0,4) node {};

\draw (0,4) node[above] {\large{$w$}};
\draw (3.769,3.77) node[above] {\large{$u$}};
\draw (-1.88,-1) node[below] {\large{$\tilde x$}};
\draw (1.5,1.5) node {\large{$P'$}};
\draw (-3.2,3.88) node[left] {\large{$\bar y$}};
\end{tikzpicture}
\caption{Case 2.3.}
\label{fig:case2.3}
     \end{subfigure}
     \begin{subfigure}[b]{0.19\textwidth}
         \centering
\begin{tikzpicture}[scale=0.3,line cap=round,line join=round,yscale=2.5,x=1cm,y=1cm]

\fill[line width=0.4pt,color=black,fill=black,fill opacity=0.2] (18.5,3.5) -- (18.00294117647059,3.538235294117647) -- (14.002454150908752,3.562172788587173) -- (13.996349408848515,-0.7149699767805711) -- (18.012829426860307,-0.7328582943943924) -- (18.498374465722225,-0.7112395795747376) -- cycle;

\draw [line width=0.4pt] (8,4)-- (12,4);
\draw [line width=0.4pt] (12,4)-- (18.5,3.5);
\draw [line width=0.4pt] (8,4)-- (8,0);
\draw [line width=0.4pt] (8,0)-- (8.143357953746044,-0.9849921906326786);
\draw [line width=0.4pt] (8.143357953746044,-0.9849921906326786)-- (12.124236912143592,-0.9950457169313532);
\draw [line width=0.4pt] (12.124236912143592,-0.9950457169313532)-- (18.498374465722225,-0.7112395795747376);
\draw [line width=0.4pt] (14.002859161578709,3.8459339106477914)-- (10,4);
\draw [line width=0.4pt] (12.004316115599147,3.922855855454534)-- (8,3.9175421531195167);
\draw [line width=0.4pt] (8.247358310550638,3.917870396044005)-- (8.278459772697719,-0.9853333840485107);
\draw [line width=0.4pt] (14.002859161578709,3.8459339106477914)-- (13.996068612761416,-0.9117030968092386);
\draw [line width=0.4pt] (18.00294117647059,3.538235294117647)-- (14.002454150908752,3.562172788587173);
\draw [line width=0.4pt] (18.012829426860307,-0.7328582943943924)-- (13.996349408848515,-0.7149699767805711);
\draw [line width=0.4pt,color=black] (18.5,3.5)-- (18.00294117647059,3.538235294117647);
\draw [line width=0.4pt,color=black] (18.00294117647059,3.538235294117647)-- (14.002454150908752,3.562172788587173);
\draw [line width=0.4pt,color=black] (14.002454150908752,3.562172788587173)-- (13.996349408848515,-0.7149699767805711);
\draw [line width=0.4pt,color=black] (13.996349408848515,-0.7149699767805711)-- (18.012829426860307,-0.7328582943943924);
\draw [line width=0.4pt,color=black] (18.012829426860307,-0.7328582943943924)-- (18.498374465722225,-0.7112395795747376);
\draw [line width=0.4pt,color=black] (18.498374465722225,-0.7112395795747376)-- (18.5,3.5);
\draw [decorate,decoration={brace,amplitude=3pt},xshift=0pt,yshift=0pt]
(12.124236912143592,-0.9950457169313532) -- (8.3,-0.9849921906326786) node {};
\draw [decorate,decoration={brace,amplitude=3pt},xshift=0pt,yshift=0pt]
(8,4) -- (10,4) node {};
\draw [decorate,decoration={brace,amplitude=3pt},xshift=0pt,yshift=0pt]
(8,0) -- (8,3.9175421531195167) node {};
\draw (14,3.846) node[above] {\large{$u$}};
\draw (10,4) node[above] {\large{$w$}};
\draw (16,1.2) node {\large{$P'$}};
\draw (14,3.562) node[left] {\large{$v_+$}};
\draw (14,-0.715) node[left] {\large{$v_-$}};

\end{tikzpicture}
\caption{Case 2.4.}
\label{fig:case2.4}
     \end{subfigure}
        \caption{Illustrations of the proof of Proposition~\ref{prop:step}. Proportions are not respected for better visibility; the important segments whose length is smaller than $C_5s$ are marked by braces instead.}
        \label{fig:cases}
\end{figure}

\vspace{1em}
\noindent
\textbf{Case 1.3.} Suppose that $|\bar P_{1,x_-}| < C_3s$ and $\bar x > \max\{\sigma_1(\bar P_{2,y_-})\} - C_5s$. 
Note that $\sigma_2(w) \le \max\{\sigma_2(\bar P_{1,\bar x})\}$ and $\min\{\sigma_2(\bar P_{1,\bar x})\} = y_-$, see Figure~\ref{fig:case1.3}.
Hence, for every $x'\in [x_-, \sigma_1(u)]$,
\[\left|\bar P_{1,x'}\right|\le |\sigma_2(u)-y_-|\le |\sigma_2(u) - \sigma_2(w)| + \left|\bar P_{1,\bar x}\right|\le 2 C_5 s.\]

Now, denote by $\hat P$ the convex set obtained from $\bar P$ by discarding the part of $\bar P$ on the left of $\bar P_{1,\sigma_1(u)}$.
Moreover, fix the point $v\in \bar P_{1,\sigma_1(u)}$ with the smallest $y$-coordinate which satisfies $|\hat P_{2,\sigma_2(v)}| = C_5s$.
By Claim~\ref{cl:square}, $|uv|\ge C_5s$.
Hence, discarding the part of $\hat P$ below $\hat P_{2,\sigma_2(v)}$ ensures that each of the left vertical side and the lower horizontal side of the obtained convex set have length $C_5s$, and reduces the area of $\hat P$ by at most $C_5 s\cdot |\bar P_{1,\sigma_1(x)}|\le C_5s\cdot 2 C_5 s$.

Define $P'$ to be the convex set obtained by applying the described modifications around $uw$ and around $u'w'$. 
On the one hand, $P'\subset [A]\cap (P+B(0,s/C_5))\subset [A]\cap \Lambda_d$.
On the other hand, since $C_5,C_6\ll C_4$,
\begin{align*}
\left|P'\right|
&\ge |P| + \left|\bar P\setminus P\right| - \left|B(z,C_5s)\right| - \left|\bar P\setminus P\right| - \left|\hat P\setminus \bar P\right|\\
&\ge |P|+C_4s^2/8C_5
- 4(C_5s)^2 - 2\cdot C_5C_6s^2 - 2\cdot 2C_5^2 s^2\ge |P|+s^2,
\end{align*}
which finishes the proof in this case.

\vspace{1em}
\noindent
\textbf{Case 2.} Suppose that $\sigma_2(z) < y_+$. If $\sigma_1(z) < \min\{\sigma_1(P_{2,y_+})\}$, the modifications needed around $uw$ are exactly as described in Case 1 without any modification.
Since $\sigma_1(z)\in \sigma_1(P_{2,y_+})$ takes us back to Case~1, in the remainder of the proof, we assume that $\sigma_1(z) > \max\{\sigma_1(P_{2,y_+})\}$ and consider several subcases. While they are essentially analogous to Case 1, we spell the details out, since an additional sub-case arises.

\vspace{1em}
\noindent
\textbf{Case 2.1.} Suppose that $|\bar P_{2,y_+}|\ge C_5s$. Then, no further modifications around $w$ are necessary.

\vspace{1em}
\noindent
\textbf{Case 2.2.} Suppose that $|\bar P_{2,y_+}| < C_5s$; in particular, $\sigma_2(w) = y_+$. 
Define $\bar y = \min\{y: |\bar{P}_{2,y}| = C_5s\}$ and suppose that $\bar y\ge \min\{\sigma_2(\bar P_{1,x_-})\} + C_5s$.
Note that $\bar y\in [y_+-C_6s,y_+)$ since the horizontal chords through $u$ in $P$ and $\bar P$ coincide and have length at least $C_5s$ by fatness of $P$, see Figure~\ref{fig:case2.2}.
Hence, discarding the part of $\bar P$ above $\bar P_{2,\bar y}$ ensures that the upper horizontal side of the new convex set has length $C_5s$, leaves the left vertical side of length at least $C_5s$ and reduces the area of $\bar P$ by at most $C_5 s\cdot C_6s$.

\vspace{1em}
\noindent
\textbf{Case 2.3.} Suppose that $|\bar P_{2,y_+}| < C_5s$ and $\bar y < \min\{\sigma_2(\bar P_{1,x_-})\} + C_5s$. 
Specifically, $\sigma_1(w)\le \max\{\sigma_1(\bar P_{2,\bar y})\}$ and $\min\{\sigma_1(\bar P_{2,\bar y})\} = x_-$, implying that $\sigma_1(u) - x_-\le (\sigma_1(u) - \sigma_1(w)) + |\bar P_{2,\bar y}|\le (C_6+C_5)s$, see Figure~\ref{fig:case2.3}.

Define $\tilde P$ to be the convex set obtained from $\bar P$ by discarding the part of $\bar P$ above $\bar P_{2,\sigma_2(u)}$, and let $\tilde x = \min\{x: |\tilde{P}_{1,x}| = C_5s\}$.
Then, by Claim~\ref{cl:square}, one can deduce that $\tilde x\le \sigma_2(u)-C_5s$.

Suppose that $\tilde x\le \max\{\sigma_1(\tilde P_{1,y_-})\} - C_5s$.
Then, discarding the part of $\tilde P$ on the left of $\tilde P_{1,\tilde x}$ ensures that each of the upper horizontal, 
the left vertical and the lower horizontal sides of the new convex set have length at least $C_5s$ and reduces the area of $\tilde P$ by at most $C_5 s\cdot C_6s$.

\vspace{1em}
\noindent
\textbf{Case 2.4.} Finally, suppose that $|\bar P_{2,y_+}| < C_5s$, $\bar y < \min\{\sigma_2(\bar P_{1,x_-})\} + C_5s$ and $\tilde x > \max\{\sigma_1(\tilde P_{1,y_-})\} - C_5s$.
Then, $\min\{\sigma_2(\tilde P_{2,\tilde x})\} = y_-$ and $y_+ - y_- = (\sigma_2(w) - \sigma_2(u)) + |\sigma_2(\tilde P_{2,\tilde x})|\le (C_6+C_5)s$.

Denote by $\hat P$ the convex set obtained from $\bar P$ by discarding the part of $\bar P$ on the left of $\bar P_{1,\sigma_1(u)}$. Also, fix the points $v_+, v_-\in \bar P_{1,\sigma_1(u)}$ with the largest and the smallest $y$-coordinate, respectively, such that
\[\left|\hat P_{2,\sigma_2(v_+)}\right| = \left|\hat P_{2,\sigma_2(v_-)}\right| = C_5s,\] 
see Figure~\ref{fig:case2.4}.
By Claim~\ref{cl:square}, $|v_+v_-|\ge C_5s$.
Hence, discarding the part of $\hat P$ above $\hat P_{2,\sigma_2(v_+)}$ and below $\hat P_{2,\sigma_2(v_-)}$ ensures that each of the upper horizontal, the left vertical and the lower horizontal sides of the new convex set have length at least $C_5s$ and reduces the area of $\hat P$ by at most $C_5 s\cdot C_6s$.

By defining $P'$ to be the convex set obtained by applying the described modifications around $uw$ and around $u'w'$, one can verify that $P'\subset [A]\cap \Lambda_d$ and $|P'|\ge |P|+s^2$ as in Case 1, which finishes the~proof.
\end{proof}

\subsection{Expanding a very large rectangle}
\label{subsec:extensions}

Throughout this section, $C>0$ is a fixed constant chosen sufficiently large depending on $s$. We next adapt a tool from \cite{Bollobas15a}. 
\begin{definition}[Quasi-stable directions]
\label{def:quasi-stable}
The set of \emph{quasi-stable} directions is 
\[\cQ=\left\{u\in S^1:u\bbR\cap [-s,s]^2\cap\bbZ^2\neq \{0\}\right\}.\]
Two quasi-stable directions $u,v\in\cQ$ are \emph{consecutive}, if there does not exist $w\in\cQ$ such that $\<u,w\><0<\<v,w\>$.
\end{definition}
The next lemma follows directly from Definition~\ref{def:quasi-stable} and \eqref{eq:defrs} (see \cite{Bollobas15a}*{Lemma~5.3}).
\begin{lemma}
    For any two consecutive elements $u,v$ of $\cQ$, $|\Hb_u\cap\Hb_v\cap\cK\setminus\{0\}|\ge r$.
\end{lemma}
A \emph{droplet} is a convex set of the form 
\[\bigcap_{u\in\cQ}\Hb_u(l_u)\]
for $l\in(\bbR\cup\{\infty\})^\cQ$, where $\Hb_u(\infty)=\bbR^2$.
Recall that $C\gg s$ and that $\cS \subset \cQ$ denotes the set of stable directions. For $u\in \cQ$ and a droplet $D$, the \emph{$u$-side of $D$} is given by 
\[D\cap \partial\Hb_u\left(\inf\left\{l\in\bbR\cup\{\infty\}:\Hb_u(l)\supset D\right\}\right).\]
A droplet is \emph{non-degenerate} if, for every $u\in \cQ$, its $u$-side has Euclidean length at least $\sqrt[3]C$, if $u\in\cQ\setminus\cS$, and at least $\sqrt C$, if $u\in\cS$. 
Notice that $|\cQ|\le 4(s^2+1)$, so every axis-parallel square of side length $2\sqrt C(s^2+1)$ contains a non-degenerate droplet. Given a droplet 
\[D=\bigcap_{u\in\cQ}\Hb_u(l_u)\]
and $v\in\cQ$, the \emph{$v$-extension} of $D$ is the droplet 
\[D'=\left(\bigcap_{u\in\cQ\setminus\{v\}}\Hb_u(l_u)\right)\cap \Hb_v(l'_v)\]
with $l'_v>l_v$ minimal such that $(D'\setminus D)\cap\bbZ^2\neq\varnothing$. Notice that, if the droplet $D$ is non-degenerate, its extensions are well-defined: 
indeed, for all triplets of consecutive directions $v,u,w\in \cQ$ and large enough $C$, the triangle $(\Hb_v(l_v)\cap\Hb_w(l_w))\setminus\Hb_u(l_u)$ contains a unit square and, hence, an integer point as well.

For any $A,B\subset\bbR^2$, we denote by $A_0^B=A\cap\bbZ^2$ and, for every $t\ge 0$, 
\[A_{t+1}^B=A_t^B\cup\left\{x\in B\cap\bbZ^2:\left|(x+\cK)\cap A_{t}^B\right|\ge r\right\}.\]
Moreover, we denote $[A]_B=\bigcup_{t\ge 0}A_t^B$, that is, $[A]_B$ is the closure of $A$ for the bootstrap percolation process restricted to $B$.

The next lemma roughly states that a non-degenerate droplet can grow by itself in a direction $u\in \cQ\setminus \cS$, while a single infection nearby suffices for it to grow in a direction $u\in \cS$.
\begin{lemma}
\label{lem:extension}
    For every $u\in \cQ\setminus\cS$ and non-degenerate droplet $D$, the $u$-extension $D'$ of $D$ satisfies
    \[[D]_{D'\setminus D}\supset D'\cap\bbZ^2.\]
    For every $u\in\cS$, non-degenerate droplet $D$, the $u$-extension $D'$ of $D$ satisfies 
    \[[\{x\}\cup D]_{D'\setminus D}\supset D'\cap\bbZ^2\]
    for every $x\in (((D'\setminus D)\cap\bbZ^2)+\cK)\setminus D$.
\end{lemma}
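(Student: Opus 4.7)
The key structural observation is that every new integer point lies on a single line perpendicular to $u$. Let $p\in\bbZ^2$ be the primitive integer vector with $u=p/|p|$; since $u\in\cQ$, we have $p\in\{-s,\dots,s\}^2$. The values $\{\langle y,u\rangle:y\in\bbZ^2\}$ form the lattice $\frac{1}{|p|}\bbZ$, so the minimality in the definition of the $u$-extension forces $l'_u=l_u+1/|p|$, and the new lattice points $y_1,\dots,y_m$ all satisfy $\langle y_i,u\rangle=l'_u$ with consecutive ones separated by the primitive perpendicular integer vector $p^\perp$. Moreover, any $k\in\bbH_u\cap\cK$ has $\langle k,u\rangle\le -1/|p|$ by integrality, so $\langle y_i+k,u\rangle\le l_u$; the only obstruction to $y_i+k\in D$ is therefore the half-plane constraints in directions $w\ne u$.

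I would then call $y_i$ \emph{safe} if it lies at distance at least $\rho=O(s^3)$ from both corners of the $u$-side of $D'$; here $\rho=\Theta(s/\sin\alpha_{\min})$ is calibrated so that the ball $B(y_i,s)\supset y_i+\cK$ stays strictly inside each $\Hb_w(l_w)$ for $w\ne u$, where $\alpha_{\min}=\Omega(s^{-2})$ is the minimum angle between consecutive elements of $\cQ$. For safe $y_i$ this gives $y_i+(\bbH_u\cap\cK)\subset D\cap\bbZ^2$. Non-degeneracy combined with $C\gg s$ ensures $\sqrt[3]C\gg\rho$, so that all but a bounded number of the $y_i$'s near each end are safe.

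In the unstable case $u\in\cQ\setminus\cS$, each safe $y_i$ has $\ge|\bbH_u\cap\cK|\ge r$ neighbours in $D\cap\bbZ^2$ and is infected at the first step of the restricted process; the few $y_i$ near the corners are then infected by propagation inward from the safe zone along $\langle\cdot,u\rangle=l'_u$, each newly processed point receiving a compensating previously-infected neighbour via $\pm p^\perp\in\partial\bbH_u\cap\cK$. In the stable case $u\in\cS$, since $r=r_s=1+\min_w|\{k\in\cK:\langle k,w\rangle<0\}|$, one has $|\bbH_u\cap\cK|\ge r-1$, so each safe $y_i$ is exactly one neighbour short; the hypothesised infection $x=y_*+k_0$ with $y_*\in (D'\setminus D)\cap\bbZ^2$ and $k_0\in\cK$ supplies the missing neighbour to $y_*$ (after first, if needed, propagating along the line using $x$ to reach the safe zone), which then initiates the same line-propagation as in the unstable case and infects all remaining $y_i$.

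\textbf{The main obstacle} is the line-propagation step: to infect the corner $y_i$'s by propagation one needs consecutive lattice points on the $u$-side to be $\cK$-neighbours, i.e.\ $p^\perp\in\cK=sK\cap\bbZ^2$. Since $|p^\perp|$ may reach $s\sqrt2$ while $K$ is only required to contain the $\ell^1$-ball of radius $1$ (a consequence of its convexity and $\pi/2$-invariance), this containment is not automatic; the proof should either extract it from the primitive structure of $p$, or invoke the auxiliary lemma immediately following Definition~\ref{def:quasi-stable}, which provides, for every pair of consecutive $u,v\in\cQ$, an $r$-element subset of $\cK$ contained in $\Hb_u\cap\Hb_v$ that serves as a dedicated update rule for the corner regions.
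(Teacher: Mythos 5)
The obstacle you flag at the end is a genuine gap, and it remains open in your write-up. The containment $p^\perp\in\cK$ really can fail for unstable $u\in\cQ$: for $K$ the Euclidean unit disc and $u$ the direction of the primitive vector $p=(s,s-1)$, the perpendicular primitive vector has norm roughly $s\sqrt 2>s$ and so lies outside $\cK_s$; consecutive new lattice points on the line $\langle\cdot,u\rangle=l'_u$ are then not $\cK$-neighbours, and your line-propagation --- which is your only mechanism for the near-corner sites in the unstable case and for ``reaching the safe zone'' in the stable case --- cannot run. Hence your first suggested remedy (``extract it from the primitive structure of $p$'') is false in general. Your second remedy points at the right ingredient but is not carried out, and it is not a patch for the corner region: the relevant consequence of consecutiveness in Definition~\ref{def:quasi-stable} is that no lattice vector of $\{-s,\dots,s\}^2$, hence no $k\in\cK$, satisfies $\langle k,u\rangle<0<\langle k,v\rangle$ when $v$ is adjacent to $u$ in $\cQ$; equivalently, the whole of $\cK\cap\bbH_u$ lies in $\Hb_v\cap\Hb_w$ for both directions $v,w$ adjacent to $u$.

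This is exactly how the paper argues, and it dissolves the corner problem (and your safe/unsafe dichotomy) entirely: for every new site $y$ on the $u$-side of $D'$, including those at its endpoints, $y+(\cK\cap\bbH_u)\subset D$ --- the adjacent constraints are respected by the property above, the non-adjacent ones because non-degeneracy keeps their sides at distance much larger than $s$, and the $u$-constraint by the integrality computation you did correctly. Consequently, in the unstable case all of $D'\cap\bbZ^2$ is infected in a single step with no propagation at all, and in the stable case every $y$ on the line has exactly $r-1$ neighbours in $D$, so the hypothesised $x$ infects its line-neighbour and infection then spreads along the line using some $z\in\cK$ with $\langle z,u\rangle=0$; such $z$ exists precisely because the stable directions of these models are only the axis and diagonal ones, whose perpendicular primitive vectors $(1,0)$, $(0,1)$, $(1,\pm 1)$ do lie in $\cK$ --- the distinction between $\cS$ and general $\cQ$ that your uniform treatment misses. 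Note also a secondary gap in your stable case: if the given $x$ is adjacent only to a near-corner site $y_*$, then under your own crude count $y_*$ may have fewer than $r-1$ neighbours in $D$, so nothing gets infected and there is no infected site on the line from which to propagate towards the safe zone; this too is repaired only by the exact count at corner sites, i.e.\ by the consecutiveness property.
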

\begin{proof}
    First, consider consecutive directions $v,u,w\in \cQ$ with $u\in\cQ\setminus\cS$. Then,
    $\bbH_u\cap \cK= \Hb_v\cap\Hb_w\cap \cK\setminus\{0\}$ has cardinal at least $r$.
    This immediately implies that, starting from $D\cap\bbZ^2$, $D'\cap\bbZ^2$ becomes infected on the first step.

    Next, assume that $u\in\cS$. Notice that $\Delta=(D'\setminus D)\cap\bbZ^2$ is a discrete segment. For any $y\in\Delta$, we have $|(y+\cK)\cap D|=r-1$ and there exists $z\in \cK$ with $\<z,u\>=0$. As a result, infection propagates along $\Delta$ starting from the initially infected site $x$.
\end{proof}

The next lemma allows us to assume that any rectangle is directed along a quasi-stable direction, see Figure~\ref{fig:rectangle:alignment} for an illustration.
\begin{lemma}
\label{lem:rectangle:alignment}
Fix $d\ge C$, $x\in\bbR^2$ and $v\in S^1$ such that the closest direction in $\cQ$ to $v$ is $u\in\cQ$, breaking ties arbitrarily. Let $u_-$ and $u_+$ be the consecutive directions to $u$ in $\cQ$ so that $v\in[u_-,u]$. Consider the rectangle
    \[R=x+\Hb_v\cap\Hb_{v+\pi/2}\cap\Hb_{v+\pi}(s)\cap\Hb_{v-\pi/2}(4d).\]
    Then, for some $x'\in R$, it holds that
    \[[R]\supset x'+\left(\Hb_u\cap\Hb_{u+\pi/2}\cap\Hb_{u+\pi}(s)\cap\Hb_{u-\pi/2}(d)\right)\cap\bbZ^2.\] 
\end{lemma}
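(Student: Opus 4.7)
The plan is to split into two cases based on whether $v\in\cQ$, assuming $x=0$ by translation invariance. If $v\in\cQ$, then the closest quasi-stable direction to $v$ is $v$ itself, so $u=v$ and $\alpha=\angle(u,v)=0$. In this case $R^*$ has the same orientation as $R$ but length $d$ instead of $4d$, so choosing $x'=-2d(v+\pi/2)\in R$ (the midpoint of the long side of $R$ perpendicular to $v$) forces $R^*\subset R$, giving $R^*\cap\bbZ^2\subset R\cap\bbZ^2\subset[R]$ at once.

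If instead $v\notin\cQ$, then since $\cS\subset\cQ$ for all models considered (via~\eqref{eq:stable}), $v$ is unstable, i.e., $|\cK\cap\bbH_v|\ge r$. The $\pi/2$-invariance of $\cQ$ also yields $v+\pi/2\notin\cQ$, so no nonzero $k\in\cK$ is perpendicular to $v$, whence $|\cK\cap\Hb_v(-t)|=|\cK\cap\bbH_v|\ge r$ for all sufficiently small $t>0$. I would then use the same choice $x'=-2d(v+\pi/2)\in R$: with this placement, the excess $R^*\setminus R$ is a thin region above $R$ (in direction $v$) of height at most $d\sin\alpha$, while $R^*$ sits in the middle of $R$'s $(v+\pi/2)$-extent, at distance at least $d-s\gg s$ from either short side of $R$ (thanks to the factor of $4$ in the length of $R$).

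To infect $R^*\setminus R$ I would iteratively extend $R$ upward in direction $v$ by the bootstrap process. At each step, any integer cell $y$ just above the currently infected region has all its $\cK$-neighbors at offsets $k\in\cK$ with $\langle k,v\rangle<0$ already infected (being in $R$ or in a previously extended layer), and the horizontal buffer keeps these neighbors within $R$'s $(v+\pi/2)$-range. The count is thus $|\cK\cap\bbH_v|\ge r$, so $y$ becomes infected. Iterating $O(d\sin\alpha)$ times covers all of $R^*\setminus R$, yielding $R^*\cap\bbZ^2\subset[R]$.

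The main technical obstacle is controlling the horizontal boundary effects throughout the iteration: one must verify that the $\cK$-neighborhoods of cells in the extending strip never reach past the short sides of $R$, which would void the count of infected neighbors and possibly stall the growth. This is precisely where the factor-of-$4$ length of $R$ (versus $d$ for $R^*$) becomes essential, providing the uniform buffer of at least $d\ge C\gg s$ that sustains the entire iteration of $O(d\sin\alpha)$ bootstrap steps needed to fill the excess.
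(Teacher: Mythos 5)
There is a genuine gap in the case $v\notin\cQ$, and it sits exactly at the step you describe as the "main technical obstacle", but the obstacle is not the one you identify. Your iteration claims that "any integer cell $y$ just above the currently infected region has all its $\cK$-neighbours at offsets $k\in\cK$ with $\langle k,v\rangle<0$ already infected". This is unjustified: since $v\notin\cQ$ (and $v+\pi/2\notin\cQ$), there are no lattice rows perpendicular to $v$, so the infected region at any stage is not a full discrete half-plane up to the height of $y$; neighbours of $y$ whose depth $|\langle k,v\rangle|$ is smaller than the current front advance need not be infected. What one can actually guarantee is only the half-plane advancement bound: if everything up to level $t$ (inside a lateral window) is infected, then after one step everything up to level $t+|h|$ is infected, where $|h|$ is the depth of the $r$-th deepest point of $\cK\cap\bbH_v$, at the price of a lateral shrink of up to $s$. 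The quantity $|h|$ is \emph{not} bounded below by a constant independent of $v$: the naive lower bound is only $\sin\alpha$ (every nonzero lattice point is at depth at least $\sin\alpha$ from $\partial\bbH_v$), and improving it requires comparing the number of "shallow" lattice points near the line $\bbR v^\perp$ with the margin $|\cK\cap\bbH_v|-r$, a genuinely quantitative argument depending on $K$, $v$ and $\alpha$ which you do not make. Consequently your count of "$O(d\sin\alpha)$ iterations" (which tacitly assumes an advance of order $1$ per step) and the assertion that the factor-$4$ buffer absorbs a lateral erosion of $s$ per step are unsupported; with the available bounds the total erosion could a priori be of order $sd$, destroying the rectangle. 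A smaller issue: with your fixed choice of $x'$ at the midpoint of the top side, depending on which way $u$ tilts relative to $v$, the excess $R^*\setminus R$ may protrude below the bottom side of $R$ rather than above the top one; this is fixable by symmetry but is not addressed.

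The paper avoids growth in the non-lattice direction $v$ altogether. It places $x'$ on the top side of $R$ as the intersection with a line $\ell_+$ perpendicular to $u_+$ through a corner of $R$, shows that it suffices to infect the thin wedge $T$ between the top side of $R$ and the line perpendicular to $u$ through $x'$ (laterally bounded by a line perpendicular to $u_-$), and then replaces $R$ by an auxiliary infinite droplet $D'$ via the identity $[R]_T\cap T=[R\cup D']_{D\setminus D'}\cap T$ (valid because every site of $D\setminus T$ within distance $s$ of $T$ lies in $R$). The wedge is then filled by repeated application of Lemma~\ref{lem:extension}: growth proceeds row by row along genuine lattice lines perpendicular to quasi-stable directions, where unstable extensions are automatic and stable ones are seeded by the infected rectangle itself, and the lateral recession is controlled by the quasi-stable directions $u_\pm$ rather than by an uncontrolled number of tiny advances. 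If you want to salvage your route, you would need to prove a quantitative lower bound on $|h|$ in terms of $s\sin\alpha$ (via the structure of lattice points in thin slabs around $\bbR v^\perp$ and the margin $|\cK\cap\bbH_v|-r$), which is precisely the kind of fragile computation the paper's comparison argument is designed to bypass.
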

\begin{proof}
If $u=v$, there is nothing to prove (take $x'=x$), so we assume $v\in[(u_-+u)/2,u)$.
Let $x,y,z,w$ be the corners of $R$ listed in clockwise direction, see Figure~\ref{fig:rectangle:alignment}. Let $\ell_+$ be the line orthogonal to $u_+$ through $w$, which intersects $xy$ at point $x'$. 
Also, let $\ell_-$ be the line orthogonal to $u_-$ through $z$, and denote $T=(x'+\Hb_u)\cap (z+\Hb_{u-})\setminus (x+\Hb_v)$. Since $s\ll d$, it suffices to prove that $T\subset [R]_{T}$.

Define $D$ as the trapezoid between the lines $\ell_-$, $\ell_+$ and the boundary of $x'+\Hb_u$, and $D' = D\cap (z+\Hb_u)$.
Notice that, as $R$ is fully infected and all sites at distance at most $s$ from $T$ in $D\setminus T$ are in $R$, we have $[R]_{T}\cap T=[R\cup D']_{D\setminus D'}\cap T$. Hence, it suffices to prove that $[R\cup D']_{D}\supset T$, which follows directly from repeated application of Lemma~\ref{lem:extension}.
\end{proof}

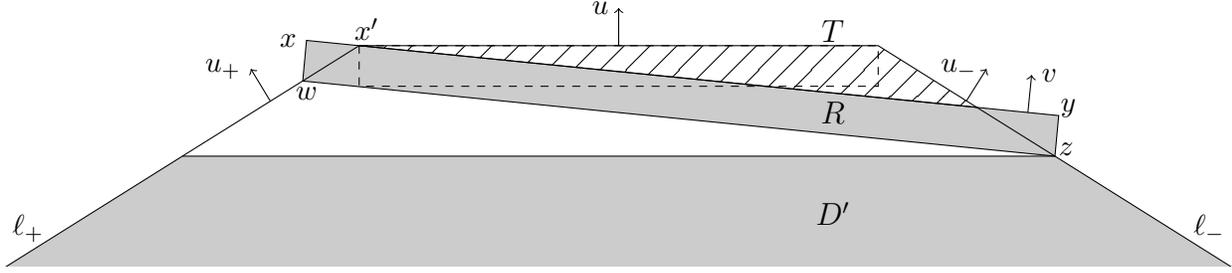
\begin{figure}
    \centering
    \begin{tikzpicture}
        \clip(-4,-3) rectangle (13,0.6);
        \draw[pattern=my north east lines] (0.7,-0.07) -- (7.6,-0.07) -- (8.91,-0.89) -- cycle;
        \draw[fill=black,fill opacity=0.2] (-0.05,-0.54) -- (0,0) node[text opacity=1,left]{$x$} -- (10,-1) -- (9.95,-1.54) -- cycle;
        \draw[fill=black,fill opacity=0.2] (-6.61,-4.65) -- (-1.65,-1.54) -- (9.95,-1.54) -- (15.92,-5.29) -- cycle;
        \draw[dashed] (0.7,-0.07) -- (0.7,-0.61) -- (7.6,-0.61) -- (7.6,-0.07) -- cycle;
        \draw (0.7,-0.07)-- (-1.65,-1.54);
        \draw (8.91,-0.89)-- (9.95,-1.54);
        \draw [->] (4.15,-0.07) -- (4.15,0.43) node[left]{$u$};
        \draw [->] (-0.48,-0.81) -- (-0.74,-0.38) node[left]{$u_+$};
        \draw [->] (8.77,-0.81) -- (9.04,-0.38) node[left]{$u_-$};
        \draw [->] (9.59,-0.96) -- (9.64,-0.46) node[right]{$v$};
        
        \draw (7,0.13) node {\large{$T$}};
        \draw (7,-0.96) node {\large{$R$}};
        \draw (7,-2.3) node {\large{$D'$}};

        \draw (10.13,-0.9) node {$y$};
        \draw (10.1,-1.45) node {$z$};
        \draw (0,-0.7) node {$w$};

        \draw (-3.7,-2.5) node {$\ell_+$};
        \draw (12,-2.5) node {$\ell_-$};
        \draw (0.8,0.15) node {$x'$};
\end{tikzpicture}
    \caption{Illustration of Lemma~\ref{lem:rectangle:alignment}. The shaded rectangle $R$ with dimensions $s\times 4d$ is infected. We prove the hatched triangle $T$ becomes infected by comparing to the process with both the shaded shapes $R$ and $D'$ infected. In particular, we infect the dashed rectangle with shorter side of length $s$ and longer side of length at least $d$.}
    \label{fig:rectangle:alignment}
\end{figure}

As we will see, if the direction $u$ in Lemma~\ref{lem:rectangle:alignment} is in $\cQ\setminus\cS$, 
it will be easy to produce a square of side length polynomial in $d$ from the rectangle. However, if $u\in\cS$, we need to ensure the presence of the additional infected site $x$ in Lemma~\ref{lem:extension}. For this, we require two preliminary lemmas.

\begin{lemma}
\label{lem:band}
    Fix $A\subset\bbZ^2$ and $u\in\cS$. Let $s_u=\max\{\<k,u\>:k\in\cK\}$. Consider the rectangles 
    \begin{align*}
    R&{}=x+\Hb_u\cap\Hb_{u+\pi/2}\cap\Hb_{u+\pi}(l)\cap\Hb_{u-\pi/2}(2s),\\
    R_+&{}=x+\Hb_u(s_u)\cap\Hb_{u+\pi/2}\cap\bbH_{u+\pi}\cap\Hb_{u-\pi/2}(2s),\\
    R_-&{}=x+\bbH_u(-l)\cap\Hb_{u+\pi/2}\cap\Hb_{u+\pi}(l+s_u)\cap\Hb_{u-\pi/2}(2s),
    \end{align*}
    for some $x\in\bbR^2$ and $l\ge 0$. Assume that $[A]\cap R_-=[A]\cap R_+=A\cap R=\varnothing$. Then, $[A]\cap R=\varnothing$.
\end{lemma}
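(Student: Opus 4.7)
The plan is to argue by contradiction: suppose $[A]\cap R\neq\varnothing$ and let $t\ge 1$ be the minimal time with $A_t\cap R\neq\varnothing$ (so $t\ge 1$ since $A\cap R=\varnothing$). For any $v\in(A_t\setminus A_{t-1})\cap R$ one has $|(v+\cK)\cap A_{t-1}|\ge r$, and the goal is to show that strictly fewer than $r$ such neighbours can be available, yielding a contradiction. Up to translation, I assume $x=0$, so $R$ sits in $\<\cdot,u\>\in[-l,0]$ and $\<\cdot,u+\pi/2\>\in[-2s,0]$.

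Next, I would observe that $R\cup R_-\cup R_+$ is precisely the ``strip'' of perpendicular width $2s$ (i.e.\ $\<\cdot,u+\pi/2\>\in[-2s,0]$) spanning all $u$-coordinates in $[-l-s_u,s_u]$. Since $\<v,u\>\in[-l,0]$ and $\<k,u\>\in[-s_u,s_u]$ for $k\in\cK$, every $w\in v+\cK$ has $\<w,u\>\in[-l-s_u,s_u]$, so $w$ is either (a) in $R\cup R_-\cup R_+$, or (b) outside the perpendicular strip, meaning $\<w,u+\pi/2\>>0$ or $\<w,u+\pi/2\><-2s$. Type (a) sites are not in $A_{t-1}$: those in $R$ by minimality of $t$, and those in $R_-\cup R_+$ by the hypothesis $[A]\cap R_\pm=\varnothing$. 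Hence all $\ge r$ infected neighbours of $v$ in $A_{t-1}$ must be of type (b).

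Finally, I would bound the number of type (b) sites in $v+\cK$. Set $c=\<v,u+\pi/2\>\in[-2s,0]$ and use that $\cK\subset B(0,s)$, so $\<k,u+\pi/2\>\in[-s,s]$ for $k\in\cK$. If $c\ge -s$, then $\<w,u+\pi/2\><-2s$ would require $\<k,u+\pi/2\><-2s-c\le -s$, which is impossible; thus only the ``upper'' side contributes, with count at most $|\{k\in\cK:\<k,u+\pi/2\>>-c\}|\le|\{k\in\cK:\<k,u+\pi/2\>>0\}|$. The $\pi/2$-rotation invariance of $K$ gives $\cK=-\cK$ and $|\bbH_{u+\pi/2}\cap\cK|=|\bbH_u\cap\cK|$, so this count equals $|\bbH_u\cap\cK|<r$ by stability of $u$. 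A symmetric argument for $c\le -s$ leaves only the ``lower'' side with count bounded the same way. In either case, $v$ has fewer than $r$ infected neighbours in $A_{t-1}$, the desired contradiction. The only substantive step is matching the perpendicular strip width $2s$ to the neighbourhood radius $s$ so that the stability of $u$, transferred to $u+\pi/2$ by rotation invariance, confines the available infected neighbours of $v$ to a single lateral side; the rest is direct bookkeeping.
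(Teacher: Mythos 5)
Your proposal is correct and follows essentially the same argument as the paper: take the first infected site of $R$, observe that $R\cup R_+\cup R_-$ exhausts the full $u$-coordinate range reachable by $\cK$ so only sites beyond one lateral side can serve as infected neighbours, and bound their number by $|\cK\cap\bbH_{u\pm\pi/2}|=|\cK\cap\bbH_u|<r$ using $\pi/2$-rotation invariance and stability of $u$. The only cosmetic difference is that you make the "without loss of generality, the site is within distance $s$ of one lateral side" step explicit via the case distinction on $c=\<v,u+\pi/2\>$.
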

\begin{proof}
    By symmetry, $u+\pi/2$ and $u-\pi/2$ are stable directions. Assume for contradiction that $R\cap [A]\neq \varnothing$, and consider $y\in R\cap [A]$ with minimal infection time. 
    Without loss of generality, let $\<y-x,u-\pi/2\>\le s$. Then, $r\le |(\cK+y)\setminus (R\cup R_+\cup R_-)|\le |\cK\cap \bbH_{u-\pi/2}|<r$
    since $u+\pi/2\in \cS$, a contradiction.
\end{proof}

The second preliminary lemma roughly says that if an infection is seen by an open half-plane, then it is seen by its boundary. 
While this is quite clear in the continuum, the discrete setting makes this property rather fragile.

\begin{lemma}[Infection does not jump over the boundary]
\label{lem:rotation}
    Let $u\in\cS$. Then, setting $s_u=\max\{\<k,u\>:k\in\cK\}$, we have
    \[\left(\bbH_u(s_u)\setminus\bbH_u\right)\cap\bbZ^2=\left((\cK+\bbH_u)\setminus\bbH_u\right)\cap\bbZ^2\subset \left(\left(\cK+\partial \Hb_u\right)\setminus\bbH_u\right)\cap\bbZ^2.\]
\end{lemma}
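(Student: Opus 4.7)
The plan is to prove the equality and the inclusion separately, with the latter hinging on a convex-hull construction.

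The first equality is immediate: since $s_u=\max_{k\in\cK}\langle k,u\rangle$, we have $\cK+\bbH_u=\bigcup_{k\in\cK}\bbH_u(\langle k,u\rangle)=\bbH_u(s_u)$, and intersecting with $\bbZ^2$ yields the claim.

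For the inclusion, I would fix $z\in\bbZ^2$ with $v:=\langle z,u\rangle\in[0,s_u)$ and aim to exhibit $k\in\cK$ with $\langle k,u\rangle=v$, since then $z-k\in\partial\Hb_u$ gives $z\in(\cK+\partial\Hb_u)\setminus\bbH_u$. The case $v=0$ is immediate by taking $k=0\in\cK$: the origin lies in $K$ because, for any $k\in K$, by convexity and $\pi/2$-rotation invariance, $(k+Rk+R^2k+R^3k)/4=0\in K$. For $v>0$, I would pick $k^*\in\cK$ with $\langle k^*,u\rangle=s_u$ and form the convex hull $C=\{\alpha k^*+\beta Rk^*:|\alpha|+|\beta|\le 1\}$ of the four integer points $\pm k^*,\pm Rk^*\in\cK$ (all lie in $\cK$ by $\pi/2$-rotation invariance); convexity of $K$ gives $C\subset sK$.

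Since $u\in\cS\subseteq\cS_\boxtimes$ is either an axis or a diagonal direction, the integer points on the line $\partial\Hb_u(v)$ form a one-dimensional sublattice of $\bbR^2$ with Euclidean spacing $1$ (axis) or $\sqrt 2$ (diagonal). The strategy is to show that the chord $C\cap\partial\Hb_u(v)$ has Euclidean length at least this spacing, whereupon it must contain an integer point $k\in C\cap\bbZ^2\subset\cK$ satisfying $\langle k,u\rangle=v$. Writing $k^*=(a,b)$ in the orthonormal basis $(u,Ru)$ (so $a=s_u$ and $|b|\le a$), an elementary piecewise-linear computation based on the four vertices of $C$, which form a parallelogram with $4$-fold rotation symmetry in this basis, yields that the Euclidean length of $C\cap\{\langle\cdot,u\rangle=v'\}$ equals $2(a^2+b^2)/(a+|b|)$ on the central plateau $|v'|\le|b|$ and decays linearly to $0$ at $v'=\pm a$; in particular, at $v'=a-\delta$ (with $\delta\in[0,a-|b|]$) it equals $2\delta(a^2+b^2)/(a^2-b^2)$. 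Integrality of $v$ forces $a-v\ge 1$ in the axis case (giving chord length $\ge 2\ge 1$) and $a-v\ge 1/\sqrt 2$ in the diagonal case (giving chord length $\ge\sqrt 2$), as required. The degenerate case $|b|=a$, in which $C$ is a $2a\times 2a$ axis-aligned square in the $(u,Ru)$ basis, is handled directly, since the chord length is then $2a$ uniformly.

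The main technical obstacle is the explicit width computation of $C$ and its uniform lower bound across the relevant range of $v$; both are reduced to elementary calculations thanks to the $4$-fold rotation symmetry of $C$ and its natural parametrisation as an affine image of the $\ell^1$ unit ball, though a short case analysis distinguishing the plateau, tip, and degenerate regimes is needed to close the argument.
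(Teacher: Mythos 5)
Your proof is correct and takes essentially the same route as the paper's: both use convexity together with $\pi/2$-rotation invariance to place a square spanned by a point of $\cK$ and its rotates inside $sK$, and then show that its chord along the relevant lattice line (spacing $1$ for axis directions, $\sqrt2$ for diagonals, these being the only possible stable directions) has length at least that spacing by an integrality gap, so that it contains a lattice point witnessing the inclusion. The only difference is one of implementation: the paper centres the square at the infecting site $x$ with corner at the target point and locates the lattice point on $\partial\Hb_u$ itself, whereas you centre it at the origin using a maximiser $k^*$ of $\langle\cdot,u\rangle$ and locate a point of $\cK$ at the exact level $\langle z,u\rangle$.
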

\begin{proof}
The equality is immediate since $\cK+\bbH_u=\bbH_u(s_u)$, so we focus on the inclusion. Let us first assume that $u=(0,1)$.
Fix $x\in \bbH_u\cap\bbZ^2$ and $k'\in (\cK+x)\setminus\bbH_u$.
If $k'\in \Hb_u$, the conclusion holds. 
Else, by $\pi/2$-rotation invariance of $K$ (the convex set such that $\cK=(sK)\cap\bbZ^2$), $sK+x$ contains the square with centre $x$ and corner $k'$, which intersects the line $\bbR\times \{0\}$ in an interval of length at least $2\<u,k'\>\ge 2$.
This interval necessarily contains an integer point $y$, which satisfies that $k'\in y+\cK$, as desired.

Next assume that $u=(1,1)/\sqrt{2}$. Then, either $k'\in \Hb_u$ or the square with centre $x$ and corner $k$ intersects the line $\mathbb R\cdot (-1,1)$ in a segment of length at least $2\<u,k'\>\ge \sqrt{2}$. 
Since $\sqrt{2}$ is exactly the distance between consecutive integer points on that line, the conclusion also holds in this case.

Finally, by \eqref{eq:stable}, the above two cases exhaust all possible stable directions up to symmetry.
\end{proof}

We are now ready to prove that a long thin rectangle in a quasi-stable direction infects a suitably large square.
\begin{proposition}
\label{prop:very:big:square}
Fix $x\in\bbR^2$, $u\in \cQ$ and $A\subset\bbZ^2$. Let 
\begin{equation}\label{eq:R}
R=x+\Hb_u\cap\Hb_{u+\pi/2}\cap\Hb_{u+\pi}(s)\cap\Hb_{u-\pi/2}(C).    
\end{equation}
Assume that $R\cap\bbZ^2\subset[A]$ and $d(A,R)\ge C$. Then, there exists $x'\in B(x,C)$ such that $R'\cap\bbZ^2\subset[A]$ with
\[R'=x'+\Hb_u\cap\Hb_{u+\pi/2}\cap\Hb_{u+\pi}\left(C^{3/4}\right)\cap\Hb_{u-\pi/2}\left(C^{3/4}\right).\]
\end{proposition}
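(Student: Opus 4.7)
The plan is to iteratively grow the thin infected rectangle $R$ into the large square $R'$ via the extension mechanism of Lemma~\ref{lem:extension}. The central difficulty is that $R$, viewed as a droplet, is degenerate---almost all of its $v$-sides for $v\in\cQ$ are single points---so one must first produce an auxiliary non-degenerate droplet $D_0\subset[A]$ before Lemma~\ref{lem:extension} can be invoked.

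The first phase builds $D_0$ by direct bootstrap arguments outside the droplet framework. The width $s$ of $R$ exactly matches the radius of $\cK$, so for any unstable direction $v\in\cQ\setminus\cS$, a site $y$ lying just beyond an edge of $R$ away from its ends has $|(y+\cK)\cap R|$ close to $|\bbH_v\cap\cK|\ge r$, and therefore becomes infected. Iterating this layer-by-layer growth on all four sides of $R$, and compounding extensions across every unstable $\cQ$-direction to round off the corners, produces a non-degenerate $D_0\subset[A]$ whose every $v$-side attains length at least $\sqrt{C}$ if $v\in\cS$ and at least $\sqrt[3]{C}$ if $v\in\cQ\setminus\cS$, which is exactly what is needed for Lemma~\ref{lem:extension} to apply. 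The long axis loses only $o(C)$ in length during this phase.

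In the second phase I would widen $D_0$ in the directions $u$ and $u+\pi$ by iterating Lemma~\ref{lem:extension}. Unstable extensions are free; the at most eight stable ones borrow their helping infection from parts of $R$ or of the Phase~1 extensions that still lie outside the current droplet. After of order $C^{3/4}$ such extensions, the droplet attains width $C^{3/4}$ in the $u$ direction, while its extent in the $u\pm\pi/2$ directions remains of order $C$, leaving ample room to place a corner $x'\in B(x,C)$ so that the target square $R'$ is inscribed in the resulting droplet.

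The hardest part will be Phase~1 in the case $u\in\cS$ itself: then all four axis directions of $R$ are stable, direct perpendicular growth fails at the first step, and $D_0$ must be built purely from compounded wedge additions in unstable diagonal $\cQ$-directions adjacent to the stable ones. Controlling the cumulative length loss along the long axis---keeping it below $C-C^{3/4}$---while ensuring every stable $v$-side eventually reaches length $\sqrt C$ is the most delicate bookkeeping, and relies on $C\gg s^4$ together with the $\pi/2$-rotation invariance of $\cK$ providing unstable $\cQ$-directions adjacent to each stable one.
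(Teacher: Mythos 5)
There is a genuine gap, and it is exactly in the case you yourself flag as hardest, $u\in\cS$. Your strategy only ever looks \emph{forward} in time from the rectangle: every site you infect in Phase~1 and Phase~2 lies in the closure $[R\cap\bbZ^2]$ of the bare rectangle (your ``helping infections'' for stable extensions are again taken from $R$ or from Phase~1 growth). But when $u\in\cS$ the thin rectangle is essentially a closed set on its own: its width in the $u$-direction is exactly $s$, and for \emph{every} direction $v$ the half-neighbourhood $\bbH_v\cap\cK$ has extent at least $s$ in the $u$-direction, with equality only for $v=\pm u$. Hence the only half-set that can fit inside the width-$s$ slab is $\bbH_{\pm u}\cap\cK$, and since $u$ (hence $u+\pi$) is stable this contains at most $r-1$ points; a site just outside the long sides sees fewer than $r$ infected sites, and sites near the ends or in ``diagonal wedge'' positions see even fewer, because the corresponding half-neighbourhoods have $u$-extent strictly larger than $s$ and cannot be contained in $R$. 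So the compounded wedge additions in unstable directions adjacent to $u$ that you propose cannot even start, and no bookkeeping of length losses or assumption $C\gg s^4$ can rescue a construction that takes place entirely inside $[R\cap\bbZ^2]$, which in this case is (essentially) $R\cap\bbZ^2$ itself.

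The missing idea is a \emph{backward}-in-time argument, and it is where the unused hypothesis $d(A,R)\ge C$ enters. The paper applies Lemma~\ref{lem:band} to thin bands of width $2s$ transversal to $R$: since such a band contains infected sites of $[A]$ (those of $R$) but no initial infections (by $d(A,R)\ge C$), and since infection cannot cross the long sides of a band bounded by the stable directions $u\pm\pi/2$, the infection must have entered through the band's far ends. This forces $[A]$ to already contain sites at every height $l\in[0,C^{3/4}]$ on one side of $R$ (Lemma~\ref{lem:rotation} pins these next to the relevant line), and it is precisely these sites that serve as the helping infections for the stable extensions of Lemma~\ref{lem:extension}, filling the trapezoid above $R$ row by row. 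For $u\in\cQ\setminus\cS$ your plan is in the same spirit as the paper's (grow in the $u$-direction inside the trapezoid bounded by the consecutive quasi-stable directions $u_\pm$, comparing with an infinite droplet rather than doing literal four-sided layer growth), but without the band argument the stable case cannot be closed.
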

\begin{proof}
We assume that $x=0$ for simplicity, but this does not affect the proof. Let $x,y,z,w$ be the corners of $R$ in clockwise order, and let $u_+, u, u_-$ be consecutive quasi-stable directions met in this clockwise order. 
Also consider the trapezoid $T=(w+\Hb_{u_+})\cap\Hb_u(C^{3/4})\cap(z+\Hb_{u_-})\setminus\Hb_u$ and the infinite droplet $D=(w+\Hb_{u_+})\cap \Hb_u\cap (z+\Hb_{u_-})$ (see Figure~\ref{fig:very:big:square:unstable}). Notice that the short base of $T$ has length at least $C-C^{3/4}/\tan
(u-u_-)-C^{3/4}/\tan
(u_+-u) > C^{3/4}$, so it suffices to show that $T\subset[A]$.

\begin{figure}
    \centering
\begin{tikzpicture}[x=1.5cm,y=1.5cm]
    \clip(-2,-1.5) rectangle (9,1.8);
    \draw[fill=black,fill opacity=0.3] (0,0) -- (7,0) -- (7,-0.5) -- (0,-0.5) -- cycle;
    \draw[fill=black,fill opacity=0.3] (-2.31,-1.95) -- (9.42,-2.01) -- (7,-0.5) -- (0,-0.5) -- cycle;
    \draw[pattern=my north east lines] (2.37,0.99) -- (0.8,0) -- (6.2,0) -- (4.63,0.99) -- cycle;
    \draw[pattern=mydots] (3.5,1.1) -- (4,1.1) -- (4,0) -- (3.5,0) -- cycle;
    \draw [->] (-1.16,-1.22) -- (-1.42,-0.8) node[right]{$u_+$};
    \draw [->] (3,0.99) -- (3,1.49) node[left]{$u$};
    \draw [->] (8.21,-1.26) -- (8.47,-0.83) node[left]{$u_-$};
    \draw (0.8,0)-- (0,-0.5);
    \draw (6.2,0)-- (7,-0.5);

    \draw (-0.15,0) node {$x$};
    \draw (7.15,0) node {$y$};
    \draw (7,-0.7) node {$z$};
    \draw (0,-0.7) node {$w$};

    \draw (2.15,1.05) node {$T$};
    \draw (2.5,-0.25) node {$R$};
    \draw (2.5,-1.25) node {$D$};
\end{tikzpicture}
    \caption{Illustration of Proposition~\ref{prop:very:big:square}. If the shaded rectangle $R$ is infected, the hatched trapezoid $T$ becomes infected as well. This is shown by comparison with the process with the shaded infinite droplet $D$ infected instead of the rectangle $R$. 
    In the case when $u$ is a stable direction, we additionally use infections found in the dotted rectangle of width $2s$.}
    \label{fig:very:big:square:unstable}
\end{figure}
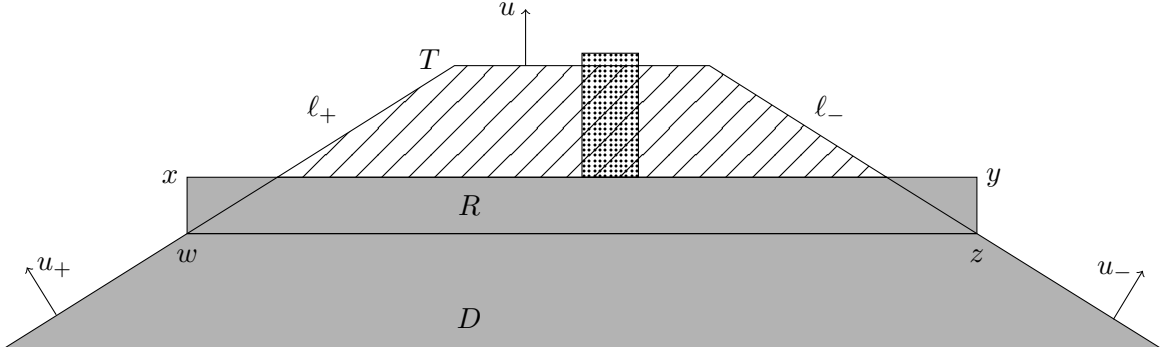

First, assume that $u\in\cQ\setminus\cS$. Then, $[R]_T\cap T=[D]_T\cap T$ as in the proof of Lemma~\ref{lem:rectangle:alignment}. Thus, a repeated application of Lemma~\ref{lem:extension} gives that $[D]_T\supset T$, as desired.

Next, assume that $u\in\cS$. We apply Lemma~\ref{lem:band} to $x\in\bbR^2$ such that $\<x,u-\pi/2\>=C/2$ and $l\ge s$ such that the rectangles $R_+$ and $R_-$ in Lemma~\ref{lem:band} are on different sides of $R$ defined in~\eqref{eq:R}. 
Up to switching $u$ and $u+\pi$ if necessary (this is the case if $R_+\cap [A]=\varnothing$), we obtain that, for every $l\in[0,C^{3/4}]$,
\[[A]\cap \Hb_u(s_u+l)\cap\Hb_{u+\pi/2}(-C/2)\cap\bbH_{u+\pi}(-l)\cap\Hb_{u-\pi/2}(C/2+2s)\neq\varnothing.\]
Let $A'\subset [A]$ be a minimal set of infections witnessing this for all values of $l$. Then, as in the unstable case, $[A'\cup R]_T\cap T=[A'\cup D]_T\cap T$. 
Hence, by applying Lemma~\ref{lem:extension} repeatedly, we obtain $[A'\cup D]_T\supset T$, as desired (note that Lemma~\ref{lem:extension} applies thanks to our choice of $s_u$ in Lemma~\ref{lem:band} and to Lemma~\ref{lem:rotation}).
\end{proof}

We can extract a non-degenerate droplet from the square provided by Proposition~\ref{prop:very:big:square}. 
This droplet will need to grow to much larger scales, possibly in the presence of a sparse set of infections, which prevents the use of Lemma~\ref{lem:band}. Instead, we rely on the following algorithm.

\begin{definition}[Extension algorithm]
\label{def:algorithm}
    Given a finite non-degenerate droplet $D$ and a set $A'\subset\bbZ^2$, we define a sequence of droplets $D_i$ as follows. Set $D_0=D$. Assume $D_i$ is defined for some $i$. 
    If there exists $u\in\cQ\setminus \cS$ such that the $u$-side of $D_i$ has length at least $2\sqrt[3] C$, choose one such $u$ arbitrarily and let $D_{i+1}$ be the $u$-extension of $D_i$. We refer to this operation as \emph{unstable extension}. 
    If no unstable extension is possible but there exists $u\in\cS$ such that the $u$-side of $D_i$ has length at least $2\sqrt C$, and there exists $x\in A'\cap((((D'\setminus D_i)\cap\bbZ^2)+\cK)\setminus D_i)$ where $D'$ is the $u$-extension of $D_i$, then set $D_{i+1}=D'$ for an arbitrary such choice of $u$. 
    We refer to this operation as \emph{stable extension}. If both unstable and stable extensions are impossible, the algorithm terminates and outputs the final droplet $D_i$.
\end{definition}

We next seek to show that, when applied to $A'=[A]$ for suitably sparse $A$ and a sufficiently large initial droplet, the extension algorithm does not terminate.
Recall the definition of $C$-connected set from Section~\ref{sec:theproof}.

\begin{lemma}
\label{lem:terminate}
Let $D$ be a non-degenerate droplet with diameter at least $C^{3/4}$. Let $A$ be such that $D\subset[A]$ and $A\cap D$ contains no $C$-connected set of $8$ vertices.
Then, some stable or unstable extension is possible taking this $D$ and $A'=[A]$ in Definition~\ref{def:algorithm}.
\end{lemma}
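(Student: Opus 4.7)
The plan is to argue by contradiction: suppose that no extension---stable or unstable---is possible from $D$, and derive a contradiction with $D\subset[A]$ and the sparsity of $A\cap D$.

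First, I identify a forbidden zone. Since every unstable extension fails, each unstable side has length less than $2\sqrt[3]{C}$, contributing at most $|\cQ\setminus\cS|\cdot 2\sqrt[3]{C}=O(s^2\sqrt[3]{C})$ to the perimeter of $D$. Combined with $\diam(D)\ge C^{3/4}$ and $|\cS|=O(1)$, some stable direction $u_0\in\cS$ must admit a side $L$ of length at least $2\sqrt{C}$. For each such long stable $u\in\cS$, failure of the $u$-extension yields $[A]\cap F_u=\varnothing$, where $F_u=(((D'_u\setminus D)\cap\bbZ^2)+\cK)\setminus D$. Using convexity, the $\pi/2$-symmetry of $K$, and the fact that $l_u'-l_u\le\sqrt{2}$ for stable $u$, I verify that for every $v\in D$ at distance more than $2s$ from every short side and corner of $D$, the entire set $(v+\cK)\setminus D$ lies in $F_u$ for the adjacent long stable direction $u$; in particular, $[A]\cap(v+\cK)\setminus D=\varnothing$ for such $v$.

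Next I show that the bulk $W=\{v\in D\setminus A:d(v,\text{short sides and corners of }D)>2s\}$ is empty. Supposing $v^*\in W$ minimises the bootstrap infection time $t_{v^*}$ in the process started from $A$, at time $t_{v^*}-1$ the infected sites inside $D$ lie in $A\cup((D\setminus A)\setminus W)$ by minimality of $t_{v^*}$, and by the forbidden-zone property $v^*$ has no infected neighbour outside $D$. Since $v^*+\cK$ has diameter at most $2s\le C$, the sparsity bound gives $|A\cap(v^*+\cK)|\le 13$. The core technical step is to control the remaining admissible non-$A$ infected neighbours, which must lie in the annulus between distance $s$ and $2s$ from short sides and corners inside $v^*+\cK$: an induction on bootstrap time shows that the ``infection front'' propagating inward from corner/short-side neighbourhoods advances by at most $s$ per step, and the sparsity of $A\cap D$ combined with the forbidden-zone barrier along $L$ prevents this front from ever reaching the threshold density $r\ge s^2/2\gg 13$ near $v^*$. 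Hence $W=\varnothing$, forcing the bulk region $D^*=\{v\in D:d(v,\text{short sides and corners})>2s\}\cap\bbZ^2$ to be contained in $A$. Since the near-features region has area $O(s\sqrt{C}+s^3\sqrt[3]{C})$ while $D$, being non-degenerate of diameter at least $C^{3/4}$, has area $\Omega(C^{13/12})$, the bulk $D^*$ is non-empty, connected, and contains a $1$-connected (hence $C$-connected) lattice path of far more than $14$ sites---contradicting the sparsity hypothesis.

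The main obstacle is the controlled inward-propagation induction: while $v^*+\cK$ avoids $\{d\le s\}$, it always meets the annular shell $\{s<d\le 2s\}$, so one must rule out that previously infected non-$A$ sites in this shell combine with the at-most-$13$ sparse $A$-neighbours to reach the infection threshold. Making this quantitative requires tracking simultaneously the bootstrap propagation speed, the density constraint imposed by sparsity, and the forbidden-zone barrier along $L$; the remaining pigeonhole, geometric, and first-infection arguments are robust.
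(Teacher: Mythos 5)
Your reduction to a long stable side, the use of the failed stable extensions to produce an infection-free frame along the long sides, and the choice of a first-infected ``bulk'' site $v^*$ are all in the spirit of the paper's proof (which likewise shows that $[A]\cap((D+\cK)\setminus D)$ is confined to within $2s$ of the short sides). But the step you yourself flag as the ``core technical step'' is a genuine gap, and it is exactly where the difficulty of the lemma sits. The sites of $D$ within distance $2s$ of the short sides and corners all lie in $D\subset[A]$, so they \emph{will} be infected, and nothing in the hypotheses prevents them from being infected very early: for instance, $[A]$ may fill the entire $2s$-neighbourhood of a short side outside $D$, which immediately infects the adjacent strip inside $D$; if the inward normal of that short side is unstable, this fully infected strip of width roughly $2\sqrt[3]{C}\gg 2s$ can then advance into the bulk with no help from $A$ at all. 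At the moment the first bulk site $v^*$ is reached, roughly half of $v^*+\cK$ is legitimately infected, so whether the count reaches $r$ is a question with margin $\Theta(s)$ (recall $r$ differs from $|\cK|/2$ by $\Theta(s)$), and ``the front advances by at most $s$ per step'' says nothing about this: the problem is not the speed of the front but the order in which the near-short-side regions and $v^*$ are infected, which the hypotheses do not control. Consequently your intended conclusion $W=\varnothing$ (bulk $\subset A$) is not established by the local count at $v^*$, and it is not the route one should expect to work.

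The paper avoids all timing considerations by a monotonicity/localisation argument instead: it replaces the boundary help near each short side by the axis-parallel bounding rectangle $R_u$ of its $2s$-neighbourhood (diameter at most $3\sqrt C$), notes that $D\subset\bigl[(A\cap D)\cup\bigcup_u R_u\bigr]$, and then bounds this closure by a crude merging process on axis-parallel rectangles: rectangles more than $2s$ apart cannot interact (axis directions are stable, so a union of such rectangles is closed), each merge multiplies the maximal diameter by at most $3s$, and the sparsity hypothesis (no $C$-connected set of $14$ sites in $A\cap D$) limits the number of merges within distance $C^{2/3}$ to $14+|\cQ|$. This yields $\diam(D)\le 3\sqrt C\,(3s)^{14+|\cQ|}<C^{3/4}$, contradicting the hypothesis. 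Some quantitative mechanism of this kind, bounding how far infection can propagate from sparse clusters plus the small short-side regions, is what your argument is missing; without it the proposal does not prove the lemma.
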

\begin{proof}
Assume that no unstable extension is possible. Then, for all $u\in\cQ\setminus\cS$, the length of the $u$-side of $D$ is at most $2\sqrt[3]C$. 
Since $\diam(D)\ge C^{3/4}$, there exist stable directions $u\in\cS$ such that the $u$-side of $D$ has length at least $(C^{3/4}-2|\cQ|\sqrt[3]C)/|\cS|>2\sqrt C$. 
We refer to $u\in\cS$ such that the $u$-side of $D$ has length at least $2\sqrt C$ as \emph{long} directions,
and denote the set of long directions by $\cL$. We refer to elements of $\cQ\setminus\cL$ as \emph{short} directions, and to their corresponding sides as \emph{short} sides.
Assume that stable extensions for long directions are not possible, that is, 
\begin{equation}
\label{eq:long:extensions}
[A]\cap ((D_u\setminus D)+\cK)\subset D
\end{equation}
for all $u\in\cL$, where $D_u$ is the $u$-extension of $D$.

Let $X=(D+\cK)\setminus D$. By definition of the closure, we have $[(A\cap D)\cup ([A]\cap X)]_D\cap D=[A]\cap D=D\cap\bbZ^2$. 
Consequently, setting $\bar A=(A\cap D)\cup ([A]\cap X)$, we have $[\bar A]\supset D$. We show that every $x\in[A]\cap X$ is at distance at most $2s$ from some short side of $D$. Indeed, by assuming the contrary for some $x\in[A]\cap X$, $x$ is at distance at most $s_u$ (recall Lemma~\ref{lem:rotation}) from the $u$-side of $D$ for some $u\in\cL$, but at distance at least $2s$ from its endpoints. By Lemma~\ref{lem:rotation}, this gives $x\in \cK+\partial\Hb_u(l_u)$, where $D_u=\bigcap_{v\in\cQ}\Hb_v(l_v)$, contradicting \eqref{eq:long:extensions}.
For each short direction $u\in\cQ$, let $R_u$ be the smallest axis-parallel rectangle containing all sites at distance at most $2s$ from the $u$-side of $D$. 
Then, by the above, $[A]\cap X\subset \bigcup_{u\in\cQ\setminus\cL} R_u$ and $\diam(R_u)\le 3\sqrt C$ for all $u\in\cQ\setminus\cL$.

We next run a crude merging process as in the proof of Claim~\ref{cl:cases} to bound from above the closure $\Delta$ of $(A\cap D)\cup\bigcup_{u\in\cQ\setminus\cL}R_u$. Start with the collection of axis-parallel rectangles given by the $R_u$ and a singleton rectangle for each element of $A\cap D$. 
If possible, arbitrarily pick two rectangles $R,R'$ in the current collection such that $d(R,R')\le 2s$ and replace them by the smallest axis-parallel rectangle containing both. Repeat this as long as possible. 
On the one hand, the union of the rectangles in the final collection contains $\Delta$. On the other hand, since $A\cap D$ contains no $C$-connected set of 8 vertices, for every rectangle $R$ in the initial collection, there are at most $8+|\cQ|$ other rectangles within distance $C^{2/3}$ from $R$ in the initial collection. 
Moreover, upon merging two rectangles, the maximum diameter of a rectangle in this merging step may increase at most by a factor of $3s$ 
(with the convention that the diameter of singleton rectangles is 1). 
But then, in the end of the process, each rectangle must have diameter at most $3\sqrt C(3s)^{8+|\cQ|} < C^{2/3}/2$ and, in particular, no two rectangles originally at distance more than $C^{2/3}$ can merge. 
Since one of these rectangles must contain $D\subset\Delta$, we have $\diam(D)\le 3\sqrt C(3s)^{8+|\cQ|} < C^{3/4}$, which contradicts the statement of the lemma and finishes the proof.
\end{proof}

Combining the results in this section, we are now able to prove the following proposition.
\begin{proposition}
\label{prop:very:big:to:supercritical}
Fix $d\ge C^4/2$. Let $A\subset\bbZ^2$ be such that $A\cap\Lambda_d$ contains no $C$-connected set of $8$ vertices and $A\cap\Lambda_{C^3/4}=\varnothing$.
Assume that there exists $x\in\Lambda_{C^3/8}$ and $v\in S^1$ such that the rectangle 
\begin{equation}
\label{eq:R:v:4C}
R=x+\Hb_v\cap\Hb_{v+\pi/2}\cap\Hb_{v+\pi}(s)\cap\Hb_{v-\pi/2}(4C)
\end{equation}
satisfies $R\cap\bbZ^2\subset[A]$. Then, there exists $u\in\cS$ and $\tilde x\in\Lambda_d$ such that $\tilde R\cap\bbZ^2\subset[A]$ for 
\[\tilde R=\tilde x+\Hb_u\cap\Hb_{u+\pi/2}\cap\Hb_{u+\pi}(s)\cap\Hb_{u-\pi/2}(d/9).\]
\end{proposition}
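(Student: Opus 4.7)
The plan is to chain together the tools built in this section in the order they were introduced.

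\textbf{Alignment.} Let $u_0\in\cQ$ be the closest quasi-stable direction to $v$; since consecutive directions in $\cQ$ differ by angles of order $1/s^2$, $u_0$ lies within such an angular distance of $v$. Apply Lemma~\ref{lem:rectangle:alignment} to $R$ to obtain an infected rectangle $R_1\subset[A]$ of dimensions roughly $s\times C$ oriented along $u_0$ and contained in $\Lambda_{C^3/4}$.

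\textbf{Inflation and droplet setup.} The hypothesis $A\cap\Lambda_{C^3/4}=\varnothing$ forces $d(A,R_1)\ge C$, so Proposition~\ref{prop:very:big:square} applies to $R_1$ and yields a square $R_2\subset[A]\cap\Lambda_{C^3/4}$ of side length $C^{3/4}$. Since $C^{3/4}\gg 2\sqrt{C}(s^2+1)$, a non-degenerate droplet $D_0$ can be inscribed in $R_2$. Now run Definition~\ref{def:algorithm} starting from $D_0$ with $A'=[A]$, producing an increasing sequence $D_0\subset D_1\subset\ldots$, each satisfying $D_i\cap\bbZ^2\subset[A]$ by Lemma~\ref{lem:extension}. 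While $D_i\subset\Lambda_d$, the sparsity condition on $A\cap D_i\subset A\cap\Lambda_d$ together with $\diam(D_i)\ge C^{3/4}$ allows Lemma~\ref{lem:terminate} to forbid termination. Since each extension strictly enlarges $D_i$ and $\Lambda_d\cap\bbZ^2$ is finite, the sequence must eventually leave $\Lambda_d$; let $D^*$ denote the last droplet contained in $\Lambda_d$.

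\textbf{Diameter lower bound.} Each extension shifts a single coordinate $l_u(D_i)$ by $O(1)$ while keeping the others fixed. Since $\Lambda_d=[-d,d]^2$ is axis-aligned and the other $l_v$ coordinates are unaffected, exiting $\Lambda_d$ in one step forces the extension direction to be an axis direction $w\in\cS_\square$ with $l_w(D^*)\ge d-O(1)$. Combined with $D^*\supset D_0\subset\Lambda_{C^3/4}$, which gives $l_{-w}(D^*)\ge -C^3/4-O(1)$, this yields a $w$-extent of $D^*$ of at least $d-C^3/4-O(1)\ge d/2$ for $d\ge C^4/2$ and $C$ sufficiently large.

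\textbf{Rectangle extraction.} By non-degeneracy, $D^*$ has axis-aligned top, bottom, left, and right sides, each of length at least $2\sqrt{C}\ge s$. The cross-section length function $V(t)=|D^*\cap\{\langle z,w\rangle=t\}|$ is concave on its support with endpoint values equal to the $(\pm w)$-side lengths of $D^*$, both $\ge 2\sqrt{C}\ge s$, so $V(t)\ge s$ throughout. Moreover, because $D^*$ has two axis-aligned sides perpendicular to $w$ of length $\ge 2\sqrt{C}$, and because its $w$-extent exceeds $d/2$, the horizontal chord function attains values $\ge d/9$ on a sub-interval of $w$-coordinates of length $\ge s$; a direct convexity argument then inscribes an axis-aligned rectangle $\tilde R$ of dimensions $s\times d/9$ in $D^*$ with long side in direction $w$. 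Taking $u$ to be $w$ rotated by $-\pi/2$ yields $\tilde R=\tilde x+\Hb_u\cap\Hb_{u+\pi/2}\cap\Hb_{u+\pi}(s)\cap\Hb_{u-\pi/2}(d/9)\subset D^*\cap\bbZ^2\subset[A]\cap\Lambda_d$, completing the proof.

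The main obstacle I anticipate is the final rectangle extraction: while conceptually a convexity computation, one must carefully exploit the axis-aligned long sides of $D^*$ (guaranteed by $\cS\supset\cS_\square$) to avoid the diagonal segments near $D^*$'s corners that could obstruct fitting a long thin rectangle; the relevant aspect ratios work out precisely because $C$ is taken sufficiently large in terms of $s$.
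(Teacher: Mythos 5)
Your first half (alignment via Lemma~\ref{lem:rectangle:alignment}, inflation via Proposition~\ref{prop:very:big:square}, inscribing a non-degenerate droplet and running the extension algorithm with $A'=[A]$, with Lemma~\ref{lem:terminate} preventing termination inside $\Lambda_d$) coincides with the paper's argument. The gap is in the last two steps. First, the claim that leaving $\Lambda_d$ ``forces the extension direction to be an axis direction $w\in\cS_\square$'' is false: containment of a droplet $\bigcap_{u\in\cQ}\Hb_u(l_u)$ in the axis-aligned box is governed by $\sup_{x\in D}\langle x,e\rangle$, which can be attained at a vertex where two non-axis constraints are binding, so increasing $l_u$ for a non-axis $u\in\cQ$ can perfectly well push the droplet out through a face of $\Lambda_d$. (This part is repairable: all one really needs is that the final droplet has diameter of order $d$, which follows since it starts in $\Lambda_{C^3/4}$ and reaches $\partial\Lambda_d$; this is what the paper uses.)

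The essential missing ingredient is the rectangle extraction. Knowing that every cross-section perpendicular to $w$ has length at least $\sqrt C$ and that the $w$-extent is of order $d$ does not produce an axis-parallel $s\times(d/9)$ rectangle, and your assertion that ``the horizontal chord function attains values $\ge d/9$ on a sub-interval of length $\ge s$'' is unjustified and false in general: consider a droplet shaped like a long parallelogram with two short horizontal sides of length $\sqrt C$ and two long sides perpendicular to $(1,-1)/\sqrt2$, stretching diagonally across $\Lambda_d$. This is a non-degenerate droplet (when $\cS=\cS_\square$ the diagonal sides are unstable and only need length $\sqrt[3]C$), it has horizontal extent of order $d$ and all vertical chords of length $\sqrt C$, yet every rectangle inside it with long side perpendicular to a stable (axis) direction has length $O(\sqrt C)\ll d/9$. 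Nothing in your argument rules out the algorithm producing such a shape, i.e.\ a droplet whose diameter is carried almost entirely by \emph{unstable} sides. This is exactly what the paper's Claim~\ref{claim:algo} excludes: by tracking the algorithm (unstable extensions are triggered only once an unstable side reaches length $2\sqrt[3]C$, and between stable extensions the unstable growth is confined by the stable constraints), it shows the total unstable side length never exceeds $8C^{3/4}$, whence some stable $u$-side of the final droplet has length at least $(\diam(D)-8C^{3/4})/|\cS|\ge d/9$; the thin rectangle is then fitted along that long stable side using that the adjacent sides have length at least $\sqrt[3]C$ and consecutive quasi-stable directions are separated by an angle $c(s)>0$. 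Without an analogue of Claim~\ref{claim:algo} (or some other mechanism forcing a long \emph{stable} side), your proof does not go through.
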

\begin{proof}
We start by applying Lemma~\ref{lem:rectangle:alignment} to obtain a rectangle of the form $R'=x'+\Hb_{w}\cap\Hb_{w+\pi/2}\cap\Hb_{w+\pi}(s)\cap\Hb_{w-\pi/2}(C)$ with $x'\in \Lambda_{C^3/8+5C}$ and $w\in\cQ$ such that $R'\cap\bbZ^2\subset [A]$. We may then apply Proposition~\ref{prop:very:big:square} to find a square
\[R''=x''+\Hb_{w}\cap\Hb_{w+\pi/2}\cap\Hb_{w+\pi}\left(C^{3/4}\right)\cap\Hb_{w-\pi/2}\left(C^{3/4}\right)\]
with $x''\in\Lambda_{C^3/8+6C}$ such that $R''\cap\bbZ^2\subset[A]$. 
Moreover, observe that $R''$ contains a non-degenerate droplet with diameter at least $C^{3/4}$, to which we can apply the extension algorithm with $A'=[A]$. 
By Lemma~\ref{lem:terminate}, the algorithm does not terminate until the droplet reaches $\partial\Lambda_d$. Let $D$ be the first droplet obtained in the extension algorithm of diameter at least $d-(C^3/8+7C)$. We will extract the rectangle $\tilde R$ from $D$, but some care is needed to ensure that it has a side parallel to some $u\in\cS$.

We require a few properties of the extension algorithm, which are not hard to verify by induction.
\begin{itemize}
    \item Each extension increases the diameter by at most an absolute constant. 
    \item Each droplet $D'$ obtained in the algorithm is non-degenerate and satisfies $D'\cap\bbZ^2\subset[A]$. 
    \item The droplets in the algorithm are nested and the number of integer points they contain is increasing. 
    \item Each extension only modifies side lengths by at most a constant amount depending on $s$ and, consequently, sides never become shorter than $\sqrt[3]C$.
\end{itemize}

We further make the following claim.
\begin{claim}
\label{claim:algo}
    The total length of unstable sides of any droplet in the algorithm is at most $8C^{3/4}$.
\end{claim}
\begin{proof}
To begin with, $R''$ is contained in an axis-parallel square of side $\sqrt2 C^{3/4}$. Recalling \eqref{eq:stable}, we obtain that this square contains any droplet in the extension algorithm up to the first stable extension. 
Since the perimeter of a convex set is increasing with respect to inclusion, this implies the desired result for these droplets. 

Let $D'=\bigcap_{u\in\cQ}\Hb_u(l_u)$ be a droplet formed by a stable extension (not necessarily the first one), and suppose that the claim holds at all previous steps.
By Definition~\ref{def:algorithm}, just before obtaining $D'$, the total length of unstable sides is at most $2\sqrt[3]C |\cQ|<C^{3/4}/2$, and the stable extension producing $D'$ increases this amount by at most $C^{3/4}/2$.

We show the claim for all steps until the next stable extension.
Define $D''= \bigcap_{u\in\cS}\Hb_u(l_u)$ and note that unstable extensions produce droplets contained in $D''$.
Then, the total length of unstable sides of any droplet $D'''$ such that $D'\subset D'''\subset D''$ is at most the difference between the total stable side lengths of $D''$ and $D'$, that is, $|\partial D''\setminus \partial D'|$. Moreover, recalling \eqref{eq:stable}, $D''\setminus D'$ is a union of $|\cS|$ regions (one at each corner of $D''$) such that, for each such region $R$, there exist points $x,y,z\in \mathbb R^2$ satisfying each of the following properties:
\begin{itemize}
    \item the segments $xy$ and $yz$ are contained in $(\partial D''\setminus \partial D')\cup \{x,z\}$,
    \item the angle $xyz$ has measure at least $\pi/2$,
    \item $\partial D'\cap R$ is a monotone curve connecting $x$ and $z$ inside the triangle $xyz$.
\end{itemize} 
As a result, for each such region $R$, $|R\cap \partial D'|\le |R\cap \partial D''|\le \sqrt{2}|R\cap \partial D'|$.
Therefore, the total length of unstable sides of every droplet $D'''$ as above is at most $C^{3/4}+|\partial D''\setminus \partial D'|\le (1+\sqrt 2)C^{3/4}$, as desired.
\end{proof}

By Claim~\ref{claim:algo}, there exists at least one $u\in\cS$ such that the $u$-side of $D$ has length at least $(\diam(D)-8C^{3/4})/|\cS|\ge d/9$. But the consecutive directions $u_+,u_-$ satisfy 
$|u_+-u|, |u-u_-|\ge c(s)$ for some $c(s)>0$ and the $u_+$ and $u_-$-sides of $D$ have length at least $\sqrt[3]C\gg s/c(s)$. 
Therefore, by convexity of $D$, we can fit the desired rectangle $\tilde R$ inside the quadrilateral spanned by the $u_-$, $u$ and $u_+$-sides of $D$, concluding the proof.
\end{proof}

We are now ready to conclude the proof of our main result restated here for convenience.

\propmain*

\begin{proof}[Proof of Proposition~\ref{prop:long:rectangle} for $\cK=\cK_s$]
Fix $A$ as in the statement. In order to apply the results of previous sections, we need to deal with the possible presence of one infection close to the origin, rather than none. Assume that $A\cap\Lambda_{C^3/4}=A\cap\Lambda_{C^3}=\{x\}$. In this case, we notice that, before any infection reaches distance $2s$ from $x$, necessarily a site $y$ within distance $2s$ from $\partial \Lambda_{2C^3/3}$ needs to become infected. Thus, by replacing $A$ with $A-y$, we may assume that $A\cap\Lambda_{C^3/4}=\varnothing$ and $A\cap \Lambda_{d-C^3}$ contains no $C$-connected set of 8 vertices, while $0\in[A]\setminus[A\cap\Lambda_{d-C^3}]$.

Since $C^3/4\ge C_1s$, Lemma~\ref{lem:big square} shows that there exists an axis-parallel square $P^{(0)}\subset\Lambda_{C_1s}$ of side length $C_2s$ with $P^{(0)}\cap \bbZ^2\subset[A]$. Since $C_2\ge C_5$, $P^{(0)}$ is a fat convex set. We may then apply Proposition~\ref{prop:step} repeatedly to obtain a sequence of fat convex sets $(P^{(j)})_{j=0}^k$ such that $k=\lceil C^3/(20s)\rceil$ and, for all $j\in\{1,\dots,k\}$,
\begin{align*}
\left|P^{(j)}\right|-\left|P^{(j-1)}\right|&{}\ge s^2,
&P^{(j)}&{}\subset \Lambda_{C_1s+2sj},&P^{(j)}\cap\bbZ^2&{}\subset[A].
\end{align*}
This immediately yields $|P^{(k)}|\ge C^3s$.

Recall that $P^{(k)}$ is fat and assume without loss of generality that it is horizontal. Then, $P^{(k)}$ contains a parallelogram of sides $C_5s$ and $\diam (P^{(k)})/2$, and angles bounded away from $0$. 
Since $\diam(P^{(k)})\ge \sqrt{|P^{(k)}|/\pi}\ge C^{3/2}$, this parallelogram contains a rectangle $R$ of the form \eqref{eq:R:v:4C}. Since $P^{(k)}\cap\bbZ^2\subset[A]\cap \Lambda_{C^3/8}$, $R$ satisfies the hypotheses of Proposition~\ref{prop:very:big:to:supercritical}. Applying this proposition completes the proof of Proposition~\ref{prop:long:rectangle} and, thereby, of Theorem~\ref{th:main} for the case $\cK=\cK_s$ for $s$ large enough.
\end{proof}

\section*{Acknowledgements}
I.H.~was supported by the Austrian Science Fund (FWF) grant No.~P35428-N. L.L.~was supported by the Austrian Science Fund (FWF) grant No.~10.55776/ESP624. We thank Itai Benjamini, Moritz Dober and Asaf Shapira for interesting discussions, and the referees for many helpful comments on the presentation. We are also grateful to the organisers of the journ\'ees ALEA at CIRM in 2019, where the work was initiated.

\appendix
\section{Proof of Lemma~\ref{lem:long:rectangle:suffices}}
\label{app}
We recall the statement for convenience.
\lemBSU*
\begin{proof}
    Recall Definition~\ref{def:quasi-stable}. 
    We make the convention that $\cQ=\cS_\square$ for the $\square$ and $\cQ=\cS_\boxtimes$ for the $\triangle$ model. 
    Further recall droplets and extensions from Section~\ref{subsec:extensions}.We see $A$ as a periodic subset of $\bbZ^2$ and show that $[A]=\bbZ^2$. 
    
    We follow the proof of Proposition~\ref{prop:very:big:square} (recall Figure~\ref{fig:very:big:square:unstable}). Note that every rectangle $R$ as described in the statement can be written as
    \[R=x+\Hb_u\cap\Hb_{u+\pi/2}\cap\Hb_{u+\pi}(\|\cK\|)\cap\Hb_{u-\pi/2}(d)\]
    for some $x\in \bbR^2$ and $u\in \cQ$. Fix such $x,u$ and $R$, let $x,y,z,w$ be the corners of $R$ in clockwise order, and let $u_+,u,u_-$ be consecutive quasi-stable directions in this clockwise order. Consider the trapezoid 
    \[T{}=(w+\Hb_{u_+})\cap\Hb_u(\varepsilon d)\cap(z+\Hb_{u_-})\setminus\Hb_u\]
    and the infinite droplet
    \[D{}=(w+\Hb_{u_+})\cap\Hb_u\cap(z+\Hb_{u_-}).\]
    We first claim that $[D\cup (A\cap T)]_T\supset T$ for all $\eps$ small enough depending on $\|\cK\|$ so that the $u$-side of $T$ (that is, the side with outer normal $u$) has length at least $\eps d$. 
    Indeed, since $A$ intersects every integer point segment of length $\eps^3 d$, our claim follows by applying Lemma~\ref{lem:extension} repeatedly in direction $u$. 
    As in the proof of Proposition~\ref{prop:very:big:square}, we deduce that $[R\cup (A\cap T)]\supset T$.
    
    Next, observe that, for suitably small $\eps$, there exists a droplet $\Delta\subset T\subset[A]$ each of whose sides has length larger than $\varepsilon^2 d$. 
    Consider the sequence of droplets $(\Delta_i)_{i\ge 0}$ such that $\Delta_0=\Delta$ and, for every $i\ge 0$, $\Delta_{i+1}$ is the $v_i$-extension of $\Delta_i$, where $v_i\in\cQ$ is such that the length of the $v_i$-side of $\Delta_i$ is maximal (with ties broken arbitrarily). We show in two steps that, for every $i\ge 0$, every side of $\Delta_i$ has length at least $\eps^3 d$. 
    First, note that the $v_i$-extension replaces the $v_i$-side of $\Delta_i$ by a trapezoid having this side as a base.
    Combining this fact with the triangle inequality shows that the perimeter of $\Delta_i$ increases (strictly) with $i$. Second, for each $v\in\cQ\setminus\{v_i\}$, the length of the $v$-side of $\Delta_{i+1}$ is not smaller than the length of the $v$-side of $\Delta_i$, while the length of the $v_i$-side of $\Delta_i$ is larger than the length of the $v_i$-side of $\Delta_{i+1}$ by at most $1/\varepsilon$. 
    Thus, for every $i\ge 0$ and $\eps$ small, each side of $\Delta_i$ has length at least $|\partial \Delta_i|/|\cQ|-1/\varepsilon\ge \eps^2d-1/\eps>\varepsilon^3d$, as desired.

    Finally, since the perimeter of $\Delta_i$ increases strictly, by the previous argument there is $i$ such that the length of the shortest side of $\Delta_i$ is at least $n$, and thus $\Delta_i\subseteq [A]$ contains an $n\times n$ square. This implies that $[A]\supset \bbT$, completing the proof.
\end{proof}

\bibliographystyle{plain}
\bibliography{Bib}

\end{document}